\newtheorem{theorem}{Theorem}[section]
\newtheorem{lemma}[theorem]{Lemma}
\newtheorem{proposition}[theorem]{Proposition}
\newtheorem{corollary}[theorem]{Corollary}
\newtheorem{question}[theorem]{Question}
\newtheorem{problem}[theorem]{Problem}
\newtheorem{definition}[theorem]{Definition}
\newtheorem{notation}[theorem]{Notation}
\newtheorem{remark}[theorem]{Remark}
\newcommand{\Z}{\mathbb{Z}}
\newcommand{\Q}{\mathbb{Q}}
\newcommand{\R}{\mathbb{R}}
\newcommand{\C}{\mathbb{C}}
\newcommand{\Fcal}{\mathcal{F}}
\newcommand{\Kcal}{\mathcal{K}}
\newcommand{\Ocal}{\mathcal{O}}
\newcommand{\JR}{{\rm JR }}
\newcommand{\Gal}{{\rm Gal }}
\DeclareMathOperator{\Aut}{Aut}
\begin{document}

\title{An approach to Julia Robinson numbers through the lattice of subfields}
\author{Xavier Vidaux and Carlos R. Videla}

\maketitle

\begin{abstract}
By fully describing the lattice of subfields of some towers of number fields built by iterating square roots, we obtain infinitely many fields, each of them either contradicts Julia Robinson's problem (obtaining a $\JR$-number $4$ which is not a minimum) or gives a Julia Robinson number strictly between four and infinity. This improves a previous result by M. Castillo and the same authors.\footnote{The two authors have been partially supported by the first author's ANID Fondecyt research projects 1170315 and 1210329, Chile. The first author was fully granted by the VRID, University of Concepci\'on, Chile, for a visit in April 2022 to Mount Royal University. He also thanks Mount Royal University for its hospitality during the stay. }
\end{abstract}

MSC 2020: Primary: 12F05, Secondary: 12F10, 12E99

Keywords: lattice of subfields, 2-towers, Galois extensions, totally real towers, iterates of quadratic polynomials


\section{Introduction}

In 1959, Julia Robinson \cite[Problem 5]{Rob59} --- see also \cite[Corollary and the remark below, p. 95]{Rob62} --- raised a problem about the distribution of conjugate sets of integers in the ring of integers of a totally real algebraic extension of the rationals. 

\begin{problem}\label{jr}
Show that in any totally real algebraic field, there is an interval $0<x<t$ ($t$ may be $+\infty$) containing infinitely many sets of conjugates of algebraic integers of the field, and such that there is only a finite number of sets of conjugate integers for any smaller $t$. 
\end{problem}

She proved that if one can solve this problem for a given field $K$, then the first order theory of the ring of integers of this field is undecidable. Since Problem \ref{jr} is still open, we reformulate it as a question: 

\begin{question}\label{jr2}
Is it true that the $t$ mentioned in Problem \ref{jr} actually exists for \emph{any} totally real field?
\end{question}

In a previous work \cite{VV15}, we approached this question by studying certain extensions of $\Q$ built by iterating square roots. Fields obtained by iterating quadratic polynomials have been studied for a long time and for various purposes --- see for instance \cite{Od85,St92,Ya20,Li22,Sm23}. In our context, we discovered that for certain subrings of the ring of integers of the fields that we consider, the $t$ does not exist. This does not answer Question \ref{jr2}, because J. Robinson had in mind the full ring of integers of the field, and we did not know then, nor now, whether any of the subrings that we considered is the ring of integers of its fraction field. Following  \cite{VV15}, it is therefore natural to reformulate the question in the following way. Given a ring $R$ of totally real algebraic integers, and $t\in\R\cup\{+\infty\}$, consider the set 
$$
\Ocal_{t}=\{x\in R\colon \textrm{all the conjugates of $x$ lie in the interval }(0,t)\},
$$ 
and define the $\JR$-number of $R$ as the infimum of the set of $t$ such that $\Ocal_t$ is infinite. Note that Julia Robinson's question has a positive answer for $R$ if and only if the $\JR$-number of $R$ is a minimum, if and only if the set of $t$ such that $\Ocal_t$ is infinite is a closed interval or $\{+\infty\}$. So J. Robinson's question is whether or not the $\JR$-number of the ring of integers of a totally real field is always a minimum. It is convenient to define the $\JR$-number of a field of totally real numbers as the $\JR$-number of its ring of integers. 

The $\JR$-number of some of the subrings from \cite{VV15} mentioned above are not minimum. Marianela Castillo \cite{Ca21}, in her PhD thesis, proved that \emph{any} of the subrings considered in \cite{VV15} is the ring of integers of its quotient field if and only if a certain sequence of positive integers associated to the field is always square free. However, it is still an open problem to find the precise $\JR$-number of the integral closure of these subrings in their quotient field. 

In \cite{CVV20}, we proved that for many of these fields, the $\JR$-number either is equal to $4$ and is not a minimum, or is strictly between $4$ and $+\infty$. In fact, there were no known $\JR$-numbers of fields strictly between $4$ and $+\infty$. The latter was solved by P. Gillibert and G. Ranieri \cite{GR19} --- for all the fields that they construct, the $\JR$-number is a minimum. More generally, the problem of determining which real numbers can be realized as the $\JR$-number of a ring of totally real integers was asked in \cite{VV15}. The only known $\JR$-numbers for rings of integers are quadratic irrationalities. For example, at the present it is not known whether the $\JR$-number can be a cubic irrationality. This distribution problem has recently attracted interest in other contexts --- see for instance \cite{VV16, Wi16, PTW22, OS22,Ok22,OS23}.  

In this paper, we completely determine the structure of subfields of the fields considered in \cite{VV15}, allowing us to improve the main result of \cite{CVV20}, by getting rid of most of its hypothesis. We believe that knowing the structure of the lattice of subfields could be useful to determine the $\JR$-number of these fields, and might lead to a counter-example for Julia Robinson's problem (as far as we know, the fields that we consider are the only known ones for which there is some hope to obtain such counter-examples).

The infinite extensions that we consider are built in the following way. For integers $\nu\ge2$ and $x_0\ge0$, consider $x^{\nu,x_0}_{n+1}=x_{n+1}=\pm\sqrt{\nu+x_n}$ for each $n\ge0$ (any choice of sign can be taken at every step), $K^{\nu,x_0}_n=\Q(x_n)$ and $K^{\nu,x_0}=\cup_{n\ge0} K^{\nu,x_0}_n$. Let $\Omega$ be the set of pairs $(\nu,x_0)$ such that for each $n\ge0$, $[K^{\nu,x_0}_{n+1}:K^{\nu,x_0}_n]=2$, and $K^{\nu,x_0}$ is totally real (we know from \cite{VV15} that $\Omega$ is infinite --- see \cite[Ch. 2]{Ca18} for more pairs in $\Omega$). 

Write $\Ocal^{\nu,x_0}$ for the ring of integers of $K^{\nu,x_0}$, and $\Z^{\nu,x_0}$ for the union over $n\ge1$ of the rings $\Z[x_n]$.

 We are now in condition to state our two main theorems. The following theorem shows that the conclusion of \cite[Thm 1.1]{CVV20} is valid for essentially all pairs $(\nu,x_0)$. 

\begin{theorem}\label{main2}
Given $(\nu,x_0)\in\Omega$, distinct from $(2,0)$ and $(2,1)$, the $\JR$-number of $K^{\nu,x_0}$ either is $4$ and it is not a minimum, or it is strictly between $4$ and $+\infty$. 
\end{theorem}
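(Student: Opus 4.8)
The plan is to reduce Theorem~\ref{main2} to two assertions about the ring of integers $\Ocal^{\nu,x_0}$ of $K^{\nu,x_0}$:
$(A)$ the set $\Ocal_4$ is finite, and
$(B)$ there is a finite $t$ for which $\Ocal_t$ is infinite.
Indeed, it is classical that the $\JR$-number of any ring of totally real algebraic integers is at least $4$: by Fekete's theorem the interval $[0,t]$, having logarithmic capacity $t/4<1$ when $t<4$, contains only finitely many complete sets of conjugates of algebraic integers, so $\Ocal_t$ is finite for $t<4$. Granting this, $(A)$ and $(B)$ force the $\JR$-number to lie in $[4,+\infty)$ and, when it equals $4$, not to be attained; conversely the stated dichotomy implies $(A)$ and $(B)$. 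So it remains to prove $(A)$ and $(B)$.

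For $(B)$ I first note that a pair $(\nu,x_0)\in\Omega$ with $\nu=2$ must have $x_0\in\{0,1\}$: total reality already at level two forces $x_0\le\nu^2-\nu=2$, while $[K^{\nu,x_0}_1:\Q]=2$ excludes $x_0=2$ because then $\nu+x_0=4$ is a square. Hence for the pairs in the statement we may assume $\nu\ge3$. Put $g(X)=X^2-\nu$, so that $x_n$ is a root of $g^{(n)}(X)-x_0\in\Z[X]$ and $\nu+x_n=x_{n+1}^2$ is an algebraic integer of $K^{\nu,x_0}$. Using that the square of any conjugate of $x_k$ is a conjugate of $\nu+x_{k-1}$, a backward induction gives $|\xi|\le a_n$ for every conjugate $\xi$ of $x_n$, where $a_0=x_0$ and $a_k=\sqrt{\nu+a_{k-1}}$. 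The map $t\mapsto\sqrt{\nu+t}$ is increasing with positive fixed point $a_\ast=\tfrac12\bigl(1+\sqrt{1+4\nu}\,\bigr)$; since $\nu\ge3$ gives $a_\ast<\nu$ and $(\nu,x_0)\in\Omega$ gives $x_0<\nu^2-\nu$, hence $a_1<\nu$, monotonicity yields $a_n<\nu$ for all $n\ge1$. Therefore every conjugate of $\nu+x_n$ lies in $(0,2\nu)$ for $n\ge1$, and as these elements have pairwise distinct degrees $2^n$ over $\Q$, the set $\Ocal_{2\nu}$ is infinite.

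Assertion $(A)$ is the core. If $x\in\Ocal_4$ then $x-2$ is an algebraic integer all of whose conjugates lie in $(-2,2)$; passing from its minimal polynomial $p$ to the monic integer polynomial $Z^{\deg p}\,p(Z+Z^{-1})$, all of whose roots lie on the unit circle, Kronecker's theorem shows these roots are roots of unity, so $x-2=\zeta+\zeta^{-1}$ for a root of unity $\zeta$, necessarily of some order $m\ge3$, whence $\Q(x)=\Q(\zeta_m)^{+}$. Consequently $\Ocal_4$ is finite if and only if the set $S=\{m\ge3:\Q(\zeta_m)^{+}\subseteq K^{\nu,x_0}\}$ is finite: if $S$ is finite the orders of the roots of unity involved are bounded, so only finitely many such $x$ occur; if $S$ is infinite, then $2+2\cos(2\pi/m)\in\Ocal_4$ for each $m\in S$ and these are pairwise distinct.

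To conclude $(A)$ I would invoke the complete description of the lattice of subfields of $K^{\nu,x_0}$ established earlier in the paper. Each $\Q(\zeta_m)^{+}$ is abelian over $\Q$, so were $S$ infinite the compositum of the corresponding fields would be an infinite abelian-over-$\Q$ subfield of $K^{\nu,x_0}$; but the structure theorem shows that, for $(\nu,x_0)\in\Omega$ distinct from $(2,0)$ and $(2,1)$, every subfield of $K^{\nu,x_0}$ Galois over $\Q$ sits inside a fixed $K^{\nu,x_0}_N$ (equivalently, the maximal abelian-over-$\Q$ subfield is finite), or --- more concretely --- one reads off from the explicit list of subfields which ones are maximal real subfields of cyclotomic fields and sees that there are only finitely many. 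This is the step I expect to be the main obstacle: everything else ($(B)$, the capacity bound, and the Kronecker reduction in $(A)$) is classical or a routine estimate, whereas here one must match the combinatorial description of the lattice against the arithmetic condition of being a cyclotomic real field. Finally, $(2,0)$ and $(2,1)$ are genuine exceptions, consistent with their exclusion: there $x_n=2\cos\bigl(\pi/2^{\,n+1}\bigr)$, respectively $x_n=2\cos\bigl(\pi/(3\cdot2^{\,n})\bigr)$, so $K^{2,0}=\bigcup_m\Q(\zeta_{2^m})^{+}$ and $K^{2,1}=\bigcup_m\Q(\zeta_{3\cdot2^m})^{+}$, hence $\Ocal_4$ is infinite and the $\JR$-number is $4$ and a minimum.
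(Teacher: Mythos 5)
Your overall strategy coincides with the paper's: reduce the dichotomy to showing that only finitely many numbers $\zeta_m+\zeta_m^{-1}$ lie in $K^{\nu,x_0}$ (your Kronecker reduction in $(A)$ is exactly the content of \cite[Section 2]{CVV20} that the paper invokes), together with the classical lower bound $4$ and the finiteness of the $\JR$-number. For the latter the paper simply cites \cite{VV15,Ca18} via the subring $\Z^{\nu,x_0}$, so your direct estimate on the conjugates of $\nu+x_n$ is a pleasant self-contained substitute, and your argument that $\nu=2$ forces $x_0\in\{0,1\}$ matches a fact the paper records.

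The genuine gap is in the final step of $(A)$, the one you yourself flag as the main obstacle. The claim that ``the structure theorem shows that every subfield of $K^{\nu,x_0}$ Galois over $\Q$ sits inside a fixed $K^{\nu,x_0}_N$'' does not follow from Theorem \ref{main}: the lattice description only says that the proper subfields are the $K^{\nu,x_0}_n$ (plus possibly two quadratics), and this is perfectly compatible with \emph{every} $K^{\nu,x_0}_n$ being a real cyclotomic field --- indeed $K^{2,0}=\bigcup_r\Q(\zeta_{2^r})^{+}$ falls under case 1 of Theorem \ref{main} (its $u_0-x_0=2$ is not a square, so its only proper subfields are its $K_n$) and nevertheless has your set $S$ infinite. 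So one cannot ``read off'' the finiteness of $S$ from the list of subfields; additional arithmetic input is needed to decide which $K^{\nu,x_0}_n$ are abelian over $\Q$. The paper supplies it in two pieces: Corollary \ref{CorFermat} bounds the odd part of $m$ (at most two Fermat primes, because $K^{\nu,x_0}$ has at most three quadratic subfields and square roots of distinct primes are $\Q$-linearly independent), and Corollary \ref{CorKnux020} rules out unbounded powers of $2$ in $m$ by showing $K^{2,0}\not\subseteq K^{\nu,x_0}$. That corollary in turn rests on the $C_4$-versus-$D_4$ computation of Corollary \ref{corNotSq} and on Siegel's finiteness theorem for integral points on the elliptic curve $Y^2=(X-x_0)\bigl(X^2-(\nu+x_0)\bigr)$ (Lemma \ref{lemEllCurves}), which together show that for $\nu\ne2$ the layers $K^{\nu,x_0}_{n+2}/K^{\nu,x_0}_n$ are non-Galois for all large $n$, so only finitely many $K^{\nu,x_0}_n$ can be of the form $\Q(\zeta_m)^{+}$. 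None of these ingredients appears in your proposal, so as written $(A)$ is not established.
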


Observe that the $\JR$-number of $K^{\nu,x_0}$ is at most the $\JR$-number of $\Z^{\nu,x_0}$, which is finite --- see \cite{VV15,Ca18}. Theorem \ref{main2} is a consequence of the following theorem and its two corollaries, as will be explained below. 

\begin{theorem}[Determination of the lattice of subfields]\label{main}
Given $(\nu,x_0)\in\Omega$, let $u_0=\nu^2-\nu$ and $u_{n+1}=u_{n}^2-\nu$ for every $n\ge0$. We have: 
\begin{enumerate}
\item If $u_0-x_0$ is not a square, then the only proper subfields of $K^{\nu,x_0}$ are the $K^{\nu,x_0}_n$. 
\item If $u_0-x_0=a^2$ is a square and $\nu\ge3$, then the only proper subfields of $K^{\nu,x_0}$ are the $K^{\nu,x_0}_n$, and the two following quadratic extensions of $\Q$ that lie in $K^{\nu,x_0}_2$: $\Q(\sqrt{2(\nu-a)})$ and $\Q(\sqrt{2(\nu+a)})$. This happens for infinitely many pairs $(\nu,x_0)$ in $\Omega$. See Figure \ref{fig:SubCampNot21Intro}. 
\item The only proper subfield of $K^{2,1}$ of infinite degree over $\Q$ is $K^{2,0}$. We have $K^{2,1}=K^{2,0}(\sqrt{3})$, and for each $n\ge1$, $K^{2,1}_{n+1}=K^{2,0}_n(\sqrt{3})$, with $K^{2,0}_n\ne K^{2,1}_{n+1}$. Moreover, the lattice of subfields of $K^{2,1}$ has the structure described in Figure \ref{fig:SubCamp21Intro}, where $M_n=\Q(\sqrt3 x^{2,0}_n)$ and each (non dotted) line corresponds to a degree $2$ extension. 
\end{enumerate}
\end{theorem}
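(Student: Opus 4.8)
\smallskip
\noindent\emph{Reduction to the arboreal Galois group.} The plan is to translate everything into the Galois theory of the pro-$2$ tree of preimages. Put $f(t)=t^{2}-\nu$, so that $x_{n}$ is a root of $f^{n}(X)-x_{0}$ and $\xi=(x_{0},x_{1},\dots)$ is a compatible branch of the rooted binary tree $T$ whose level-$n$ vertices are the roots of $f^{n}(X)-x_{0}$. Let $L_{n}$ be the splitting field of $f^{n}(X)-x_{0}$, $L=\bigcup_{n}L_{n}$ and $G=\Gal(L/\Q)\le\Aut(T)$. Since $(\nu,x_{0})\in\Omega$, $G$ is transitive on every level of $T$, hence on the set $\partial T$ of ends, and $K^{\nu,x_{0}}=L^{H}$ for $H=\mathrm{Stab}_{G}(\xi)$. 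By infinite Galois theory the subfields of $K^{\nu,x_{0}}$ correspond to the closed subgroups of $G$ between $H$ and $G$, equivalently (via $G/H\cong\partial T$) to the $G$-stable partitions of $\partial T$; the fields $K^{\nu,x_{0}}_{n}$ correspond to $G_{[n]}:=\mathrm{Stab}_{G}(x_{n})$ and to the partitions ``agree on the first $n$ levels''. So the task becomes: decide exactly when $G$ has a subgroup strictly between $H$ and $G$ other than the $G_{[n]}$.

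\smallskip
\noindent\emph{The level-two relation.} The decisive local computation is that the roots of $f^{2}(X)-x_{0}$ are $\pm x_{2}$ and $\pm y_{2}$ (with $y_{2}^{2}=\nu-x_{1}$), so $x_{2}y_{2}=\sqrt{(\nu-x_{1})(\nu+x_{1})}=\sqrt{u_{0}-x_{0}}$, and more generally $\sqrt{(\nu-x_{m+1})(\nu+x_{m+1})}=\sqrt{u_{0}-x_{m}}$ governs the subtree below the vertex off $\xi$ at level $m+1$; I would combine this with the telescoping identity $\Norm_{K^{\nu,x_{0}}_{m}/\Q}(u_{0}-x_{m})=u_{m}-x_{0}$. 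Three regimes then appear. If $u_{0}-x_{0}=a^{2}$, then $y_{2}=a/x_{2}\in K^{\nu,x_{0}}_{2}$, and $(x_{2}\pm y_{2})^{2}=2(\nu\pm a)$, $(x_{2}-y_{2})(x_{2}+y_{2})=2x_{1}$ give $K^{\nu,x_{0}}_{2}=\Q(\sqrt{2(\nu-a)},\sqrt{2(\nu+a)})$; since $(\nu,x_{0})\in\Omega$ forces $[K^{\nu,x_{0}}_{2}:\Q]=4$, both $2(\nu\pm a)$ are non-squares, so $K^{\nu,x_{0}}_{2}/\Q$ is Klein four and these are two genuinely new quadratic subfields — the first half of~(2). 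If $u_{0}-x_{0}\notin(\Q^{*})^{2}$ but $(u_{0}-x_{0})(\nu+x_{0})\in(\Q^{*})^{2}$, then $\sqrt{u_{0}-x_{0}}\in K^{\nu,x_{0}}_{1}$ and $K^{\nu,x_{0}}_{2}/\Q$ is cyclic of degree $4$ — the degenerate ``cyclotomic'' subcase of~(1), which includes $(2,0)$. Otherwise $K^{\nu,x_{0}}_{2}/\Q$ is not Galois and has $K^{\nu,x_{0}}_{1}$ as its only quadratic subfield — the generic case of~(1), and of~(2) above level two.

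\smallskip
\noindent\emph{Propagation and the pair $(2,1)$.} In the generic regime (and in~(2) above level two) I would show that the image of $H$ in $\Aut$ of the subtree below any vertex off $\xi$ (of level $\ge 1$, resp.\ $\ge 2$) is transitive on its ends; an $H$-orbit argument on $\partial T$ then forces every non-standard partition to collapse to a standard one, which gives precisely the subfield lists in~(1) and~(2). For the infinitude claim in~(2) one exhibits an explicit subfamily of $\Omega$ with $u_{0}-x_{0}$ a square — e.g.\ appropriate pairs $(\nu,\nu-1)$, where $u_{0}-x_{0}=(\nu-1)^{2}$ — using the totally-real and degree-$2$ criteria of \cite{VV15,Ca18}. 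In the degenerate subcase the $H$-transitivity genuinely fails, but there $K^{\nu,x_{0}}$ is \emph{abelian}: one shows $\Gal(K^{\nu,x_{0}}_{n}/\Q)$ is cyclic of order $2^{n}$ for every $n$ (for $(2,0)$ this is the cyclotomic tower $K^{2,0}=\Q(\zeta_{2^{\infty}})^{+}$), hence $G\cong\Z_{2}$, whose only closed subgroups are the $G_{[n]}$. For~(3), where $u_{0}-x_{0}=1$ but $\nu=2$, I would argue explicitly: the half-angle identity gives $x^{2,1}_{n}=2\cos\!\bigl(\pi/(3\cdot2^{n})\bigr)$ and $x^{2,0}_{n}=2\cos\!\bigl(\pi/2^{n+1}\bigr)$, so $K^{2,1}$ is the maximal real subfield of $\Q(\zeta_{3})\Q(\zeta_{2^{\infty}})$, $K^{2,1}=K^{2,0}(\sqrt3)$, $K^{2,1}_{n+1}=K^{2,0}_{n}(\sqrt3)\ne K^{2,0}_{n}$ (they differ in their ramification at $3$), and $\Gal(K^{2,1}/\Q)\cong\Z_{2}\times C_{2}$, the factors being $\Gal(K^{2,0}/\Q)$ and $\Gal(\Q(\sqrt3)/\Q)$. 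Then one lists the closed subgroups of $\Z_{2}\times C_{2}$ — the $2^{n}\Z_{2}\times\{0\}$, the $2^{n}\Z_{2}\times C_{2}$, $\{0\}\times C_{2}$, and the twisted $D_{n}=\langle\,2^{n}\Z_{2}\times\{0\},\,(2^{n-1},1)\,\rangle$ of index $2^{n}$ — and reads off their fixed fields; $D_{n}$ yields the cyclic degree-$2^{n}$ field $L^{D_{n}}=M_{n}=\Q(\sqrt3\,x^{2,0}_{n})\ne K^{2,0}_{n}$, reproducing the lattice of Figure~\ref{fig:SubCamp21Intro}.

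\smallskip
\noindent\emph{Main obstacle.} The delicate point is the propagation step for~(1) and~(2): proving that the arboreal image $G$ is as large as the single level-two constraint allows, i.e.\ that $H$ together with the rigid vertex-stabilizers of $G$ acts transitively on every relevant subtree. Via Kummer theory over $L_{m-1}$ this reduces to an arithmetic assertion — that, apart from the one relation present when $u_{0}-x_{0}$ is a square, no further product $\prod_{\beta\in S}(\nu+\beta)$ over roots $\beta$ of $f^{m-1}(X)-x_{0}$ (the relevant ones being $\pm(u_{m-2}-x_{0})$ and the sister quantities $u_{0}-x_{m'}$) is a square — and that assertion has to be established by combining the non-squareness of $u_{0}-x_{0}$ with growth estimates for $u_{n+1}=u_{n}^{2}-\nu$ (for large $k$, $u_{k}-x_{0}=u_{k-1}^{2}-(\nu+x_{0})$ is trapped strictly between consecutive squares since $\nu+x_{0}>0$ is far below $u_{k-1}$), the statement being tuned so as also to cover the abelian degenerate pairs such as $(2,0)$.
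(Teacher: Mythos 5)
Your reduction to the arboreal group and your level-two computation ($x_2y_2=\sqrt{u_0-x_0}$, $(x_2\pm y_2)^2=2(\nu\pm a)$) agree with the paper, and your treatment of item 3 (identifying $K^{2,1}$ as the real subfield of $\Q(\zeta_3)\Q(\zeta_{2^\infty})$, computing the Galois group $\Z_2\times C_2$, and listing its closed subgroups) is essentially the paper's argument. But the step you yourself flag as the ``main obstacle'' --- propagation above level two --- contains a genuine gap, and the fix you propose would not work. The quantity you must show is a non-square is not $u_k-x_0$ (which the paper handles elementarily, by trapping it between consecutive squares exactly as you suggest) but the \emph{product} $f_n=(u_{n-1}-x_0)(u_n-x_0)=(u_{n-1}-x_0)(u_{n-1}^2-\nu-x_0)$: its norm-theoretic meaning is that $f_n$ non-square forces the splitting field of $K_{n+2}/K_n$ to have group $D_4$ rather than $C_4$. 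This product grows like $u_{n-1}^3$, so no ``between consecutive squares'' estimate applies; a product of two non-squares can perfectly well be a square. The paper's resolution is non-elementary: $Y^2=(X-x_0)(X^2-(\nu+x_0))$ is an elliptic curve, so by Siegel's theorem $f_n$ is a square for only finitely many $n$. Even then one only gets \emph{finiteness} of the bad levels, so a further structural argument is required: a corollary of the thinness criterion shows that a first exotic subfield of degree $2^{\ell}$ forces $K_{\ell+1}/K_2$ to be Galois, which is then excluded by a case analysis on $L\cap K_\ell$ together with an explicit check that $K_3/\Q(\sqrt{2(\nu\pm a)})$ is \emph{not} Klein --- this last computation is precisely where $(2,1)$ separates from $\nu\ge3$, and nothing in your sketch plays that role.

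Two further points. First, your ``degenerate cyclotomic subcase'' (where $(u_0-x_0)(\nu+x_0)$ is a square, so $K_2/\Q$ is cyclic $C_4$) does not in general propagate to cyclicity of every $K_n/\Q$: for $\nu\ne 2$ the higher quartics $K_{n+2}/K_n$ are eventually non-Galois by the same $D_4$ computation, so your claim $G\cong\Z_2$ there is unjustified and likely false outside $(2,0)$. It is also unnecessary: for item 1 one only needs \emph{thinness}, which follows from ruling out the Klein group at each pair of consecutive levels (the paper's criterion: a $2$-tower is non-thin iff some $F_n/F_{n-2}$ is Klein), and that is exactly what the elementary non-squareness of $u_n-x_0$ delivers, with no need to decide between $C_4$ and $D_4$. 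Second, your family $(\nu,\nu-1)$ for the infinitude claim is fine (with $\nu$ even so that $\nu+x_0\equiv 3\bmod 4$ guarantees membership in $\Omega$), and matches the paper's construction in spirit.
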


\begin{corollary}\label{CorKnux020}
If $K^{2,0}\subseteq K^{\nu,x_0}$, then $(\nu,x_0)=(2,0)$ or $(\nu,x_0)=(2,1)$. 
\end{corollary}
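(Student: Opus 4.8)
The plan is to combine Theorem~\ref{main} with a Diophantine argument in two stages: first show that $K^{2,0}\subseteq K^{\nu,x_0}$ forces $(\nu,x_0)=(2,1)$ or $K^{2,0}=K^{\nu,x_0}$, and then show that $K^{2,0}=K^{\nu,x_0}$ forces $(\nu,x_0)=(2,0)$.

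For the first stage: each $K^{2,0}_n$ has degree $2^n$ over $\Q$ (because $(2,0)\in\Omega$), so $K^{2,0}$ is an infinite extension of $\Q$. Assume $K^{2,0}\subseteq K^{\nu,x_0}$ with $(\nu,x_0)\neq(2,1)$; then $(\nu,x_0)$ is covered by case (1) or case (2) of Theorem~\ref{main}, and in either case every proper subfield of $K^{\nu,x_0}$ is finite over $\Q$, so $K^{2,0}=K^{\nu,x_0}$. Applying Theorem~\ref{main}(1) to $(2,0)$ (here $u_0-x_0=2$ is not a square), the subfield lattice of $K^{2,0}$ is the chain $\Q\subsetneq K^{2,0}_1\subsetneq K^{2,0}_2\subsetneq\cdots$. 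A case (2) field, by contrast, has a proper subfield (namely $\Q(\sqrt{2(\nu-a)})$) that is none of the $K^{\nu,x_0}_n$; so case (2) is impossible and we are in case (1). Identifying the two chains degree by degree gives $K^{\nu,x_0}_n=K^{2,0}_n$ for every $n$.

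For the second stage the key is to turn $K^{\nu,x_0}=K^{2,0}$ into Diophantine conditions. First, $K^{2,0}_n=\Q\bigl(2\cos(\pi/2^{n+1})\bigr)=\Q(\zeta_{2^{n+2}})^+$: from $(x^{2,0}_n)^2=2+x^{2,0}_{n-1}$ and the identities $2+2\cos\phi=(2\cos(\phi/2))^2$ and $2-2\cos\phi=(2\sin(\phi/2))^2$, an induction shows $x^{2,0}_n=\pm2\cos(j\pi/2^{n+1})$ with $j$ odd, for every choice of signs, and this generates $\Q(\zeta_{2^{n+2}})^+$. Thus $K^{2,0}$ is the cyclotomic $\Z_2$-extension of $\Q$, unramified outside $2$, and $K^{2,0}_{n+1}=K^{2,0}_n(\sqrt{2+\vartheta_n})$ with $\vartheta_n:=2\cos(\pi/2^{n+1})$ generating $K^{2,0}_n$. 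Since $K^{\nu,x_0}_{n+1}=K^{\nu,x_0}_n(\sqrt{\nu+x_n})=K^{2,0}_n(\sqrt{2+\vartheta_n})$ and $\nu+x_n$ is a non-square in $K^{\nu,x_0}_n$, the ratio $(\nu+x_n)/(2+\vartheta_n)$ is a square in $K^{\nu,x_0}_n$. Taking norms to $\Q$ and using $N_{K^{2,0}_n/\Q}(2+\vartheta_n)=2$ (a short cyclotomic computation) together with $N_{K^{\nu,x_0}_n/\Q}(\nu+x_n)=u_{n-1}-x_0$ (the minimal polynomial of $x_n$ over $\Q$ is $g^{\circ n}(X)-x_0$ with $g(X)=X^2-\nu$, so the norm is $g^{\circ n}(-\nu)-x_0$), we get: $\nu+x_0=2k^2$ with $k\ge1$ (the case $n=0$), and, for $n\ge1$, that $u_{n-1}-x_0$ is twice a square. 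With $s_0=\nu$, $s_{m+1}=s_m^2-\nu$ (so $s_m=u_{m-1}$ for $m\ge1$), and using $\nu+x_0=2k^2$, this becomes
\[
s_m^2-2k^2=2l_m^2\qquad (m\ge0),
\]
with $l_m\ge1$ (positivity because $\nu+x_n$ is totally positive).

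The displayed equation makes every $s_m$, hence $\nu$, even; write $\nu=2\nu'$, $s_m=2\mu_m$, so $\mu_0=\nu'$, $\mu_{m+1}=2\mu_m^2-\nu'$, $2\mu_m^2-k^2=l_m^2$. As $x_0=2(k^2-\nu')$, putting $\xi=k^2-\nu'=x_0/2\ge0$ gives $\mu_{m+1}=2\mu_m^2-\nu'=l_m^2+\xi$, hence $l_{m+1}^2=2(l_m^2+\xi)^2-k^2=:f(l_m)$. Viewing $f(t)=2(t^2+\xi)^2-k^2$ as a quadratic in $t^2$, it has discriminant $8k^2>0$ and roots $t^2=-\xi\pm k/\sqrt2$, which are distinct, nonzero and not negatives of each other (as $k\ge1$ and $\xi\in\Z$); so $f$ has four distinct roots and $C\colon s^2=f(t)$ is a smooth affine curve of genus $1$. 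By Siegel's theorem $C$ has finitely many integral points. But if $\nu'\ge2$ then $\mu_{m+1}=2\mu_m^2-\nu'>\mu_m$, so $\mu_m\to\infty$, hence $l_m\to\infty$, and the pairs $(l_m,l_{m+1})$ give infinitely many distinct integral points on $C$ — a contradiction. Hence $\nu'=1$, i.e.\ $\nu=2$; then $4-2k^2=2l_0^2>0$ forces $k=1$ and $x_0=2k^2-\nu=0$, so $(\nu,x_0)=(2,0)$.

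The hard step is the last one: extracting a workable finite Diophantine problem from the field identification $K^{2,0}=K^{\nu,x_0}$. The norm identities above reduce it to the equations $s_m^2-2k^2=2l_m^2$, and the finiteness then comes from recognizing the iteration as producing integral points on a genus-$1$ curve and invoking Siegel's theorem. A more elementary route, controlling the ramification of $K^{\nu,x_0}_n$ for a few small $n$, may also work, but I expect the genus-$1$ argument to be the cleanest.
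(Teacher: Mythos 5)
Your proof is correct, and while the first stage coincides with the paper's strategy, the second stage takes a genuinely different route. Both proofs first use the lattice description to reduce to $K^{\nu,x_0}_n=K^{2,0}_n$ for all $n$. From there the paper exploits Galois groups: the quartic extensions $K^{2,0}_{n+2}/K^{2,0}_n$ are cyclic, while Corollary \ref{corNotSq} shows that the splitting field of $K^{\nu,x_0}_{n+2}/K^{\nu,x_0}_n$ has group $D_4$ whenever $f_n=(u_{n-1}-x_0)(u_n-x_0)$ is not a square, and Lemma \ref{lemEllCurves} (Siegel applied to $Y^2=(X-x_0)(X^2-(\nu+x_0))$) shows that $f_n$ is a non-square for all but finitely many $n$ once $\nu\ne2$. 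You instead compare radicands by Kummer theory: $K^{2,0}_n(\sqrt{\nu+x_n})=K^{2,0}_n(\sqrt{2+\vartheta_n})$ forces their ratio to be a square, and taking norms (your norm identity $N_{n,0}(\nu+x_n)=u_{n-1}-x_0$ is exactly Lemmas \ref{corNotSqGen2New00} and \ref{corNotSqGen1} combined) yields $u_{n-1}-x_0=2l_n^2$ for every $n$, which you then feed into Siegel's theorem for a different genus-one quartic. Both arguments ultimately rest on Siegel; yours trades the quartic Galois-group dichotomy (Theorem \ref{thmQuad2}) for a more elementary norm computation, at the cost of rebuilding a finiteness statement the paper already packaged: since $u_{n-1}-x_0=2l_n^2$ for all $n$ makes every $f_n$ a square, you could have concluded directly from Lemma \ref{lemEllCurves} instead of introducing the curve $s^2=2(t^2+\xi)^2-k^2$.

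Two small inaccuracies, neither fatal. First, when $\xi=x_0/2=0$ the two values $-\xi\pm k/\sqrt2$ \emph{are} negatives of each other; this does not matter, because their distinctness and nonvanishing already guarantee that $f$ has four distinct roots, which is all the genus computation needs. Second, in ruling out case (2) of Theorem \ref{main} you should make the contradiction explicit: a chain has a unique quadratic subfield, whereas a case-(2) field has three distinct ones by Lemma \ref{lemothersqrtn}.
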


Given an integer $m$, let $\zeta_m$ denote a primitive $m$-th root of unity. 

\begin{corollary}\label{CorFermat}
If $\zeta_m+\zeta_{m}^{-1}$ is in $K^{\nu,x_0}$ for some $m$, then $m$ has the form either $2^rp_1p_2$ for $r\le2$, or $2^rp_1$ for $r\ge3$, or $2^r$ for $r\ge2$, where $p_1$ and $p_2$ are distinct Fermat primes. 
\end{corollary}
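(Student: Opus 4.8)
The plan is to recast the hypothesis as a statement about subfields of $K^{\nu,x_0}$ and then squeeze it against Theorem~\ref{main}. Write $L_m=\Q(\zeta_m+\zeta_m^{-1})$; this is the maximal totally real subfield $\Q(\zeta_m)^+$ of $\Q(\zeta_m)$, it is abelian over $\Q$ with $\Gal(L_m/\Q)\cong(\Z/m\Z)^\times/\{\pm1\}$ and $[L_m:\Q]=\max(1,\varphi(m)/2)$, and the hypothesis says exactly that $L_m\subseteq K^{\nu,x_0}$. Since $K^{\nu,x_0}=\bigcup_n K_n^{\nu,x_0}$ with $[K_n^{\nu,x_0}:\Q]=2^n$, every finite subfield of $K^{\nu,x_0}$ has $2$-power degree over $\Q$; hence $\varphi(m)$ is a power of $2$, which already forces $m=2^r p_1\cdots p_s$ with the $p_i$ distinct Fermat primes. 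Everything then reduces to bounding $s$, and, once $s=2$, bounding $r$.

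The core step is to identify, using Theorem~\ref{main}, which subfields of $K^{\nu,x_0}$ are abelian over $\Q$, since $L_m$ is one of them. In case~(1) (which includes $(2,0)$, as there $u_0-x_0=2$), the proper subfields are the chain $(K_n)$, so $L_m=K_n$ for some $n$, and such an abelian $K_n$ must be \emph{cyclic}: among the $K_j$ only $K_1$ is quadratic, so $K_n$ has at most one subfield of index $2$, whereas a noncyclic abelian group of order $2^n$ has at least three. Thus $(\Z/m\Z)^\times/\{\pm1\}$ is cyclic. In case~(2), the proper subfields are $(K_n)$ together with $F_\pm:=\Q(\sqrt{2(\nu\pm a)})$; using $a=\sqrt{u_0-x_0}$ and $\nu^2-a^2=\nu+x_0$, the nested-radical identity $\sqrt{\nu\pm x_1}=\tfrac12\bigl(\sqrt{2(\nu+a)}\pm\sqrt{2(\nu-a)}\bigr)$ gives $x_2\in F_-F_+$, hence $K_2\subseteq F_-F_+$, and since $F_\pm\subseteq K_2$ by Theorem~\ref{main}(2) we get $K_2=F_-F_+$, biquadratic; moreover no $K_n$ with $n\ge3$ is abelian (an abelian $\Gal(K_3/\Q)$ of order $8$ having $(\Z/2)^2$ as a quotient would be $(\Z/2)^3$ or $\Z/4\times\Z/2$, forcing at least three quadratic, respectively three quartic, subfields of $K^{\nu,x_0}$, against Theorem~\ref{main}(2); and an abelian $K_n$ with $n>3$ would have $K_3/\Q$ as an abelian sub-extension). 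So $L_m\in\{\Q,K_1,F_-,F_+,K_2\}$, of degree $\le4$. Finally in case~(3), from $K^{2,1}=K^{2,0}(\sqrt3)$, $K_n^{2,0}=\Q(2\cos(\pi/2^{n+1}))=\Q(\zeta_{2^{n+2}})^+$ and $\Q(\sqrt3)=\Q(\zeta_{12})^+$ a short computation gives $K^{2,1}=\Q(\zeta_{3\cdot2^\infty})^+$, so $L_m$ lies in a real cyclotomic field whose conductor divides some $3\cdot2^k$.

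It remains to convert each structural constraint into a constraint on $m$ via $\Gal(L_m/\Q)=(\Z/m\Z)^\times/\{\pm1\}$ and conductors; this is finite, routine bookkeeping. If $(\Z/m\Z)^\times/\{\pm1\}$ is cyclic: each factor $(\Z/p_i\Z)^\times\cong\Z/2^{e_i}$ of $(\Z/m\Z)^\times$ contributes one dimension to the $\F_2$-rank and has $-1$ as its unique element of order $2$, so quotienting by $\langle-1\rangle$ lowers the $\F_2$-rank by at most one; a cyclic quotient thus forces $s\le2$, and forces $r\le1$ when $s=2$ and $r\le2$ when $s=1$ (otherwise $(\Z/2^r\Z)^\times$ would contribute too much rank). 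If $[L_m:\Q]\le4$, then $\varphi(m)\le8$ confines $m$ to a short explicit list, and matching Galois groups finishes: a biquadratic $L_m=K_2$ forces $(\Z/m\Z)^\times\cong(\Z/2)^3$, i.e.\ $m=24$; a quadratic $L_m$ forces $\varphi(m)=4$; the trivial case forces $\varphi(m)\le2$. If the conductor of $L_m$ divides some $3\cdot2^k$, then $m$ is a power of $2$, or $3$ times a power of $2$. In every case $m$ has one of the three listed shapes.

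The hard part is the second paragraph, i.e.\ reading the abelian subfields off Theorem~\ref{main}. In case~(2) it rests on the biquadratic identity $K_2=F_-F_+$, which simultaneously exhibits $K_2$ as the unique quartic subfield and, through the index-$2$/order-$2$ subgroup count, bars $K_n$ ($n\ge3$) from being abelian; one should also check that $F_-,F_+$ are genuinely quadratic — equivalently that $2(\nu-a)$ and $2(\nu+a)$ are not perfect squares — which follows from $(\nu,x_0)\in\Omega$ since otherwise $x_2$ would generate an extension of degree $\le2$ over $\Q$. The remaining items — the computation $K^{2,1}=\Q(\zeta_{3\cdot2^\infty})^+$ and the elementary arithmetic of $(\Z/m\Z)^\times/\{\pm1\}$ — are routine.
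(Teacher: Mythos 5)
Your proof is correct, and it takes a genuinely different route from the paper's. The paper's argument is very short: it cites \cite{CVV20} for the shape $m=2^rp_1\cdots p_s$ with $r\ge 2$ and the $p_i$ distinct Fermat primes, notes that each $p_i$ contributes the real quadratic field $\Q(\sqrt{p_i})$ and that $r\ge 3$ additionally contributes $\Q(\sqrt{2})$, and then combines the linear independence of square roots of primes with the bound from Theorem \ref{main} that $K^{\nu,x_0}$ has at most three quadratic subextensions. You instead determine \emph{all} abelian-over-$\Q$ subfields of $K^{\nu,x_0}$ in each of the three cases of Theorem \ref{main} (only cyclic $K_n$ in case 1; the fields $\Q$, $K_1$, $F_\pm$, $K_2$ in case 2; subfields of $\bigcup_k\Q(\zeta_{3\cdot 2^k})^+$ in case 3) and then match $L_m=\Q(\zeta_m)^+$ against this list through its Galois group $(\Z/m\Z)^\times/\{\pm1\}$ and its conductor. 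Your version is longer but buys several things: it is self-contained (the Fermat-prime shape is rederived from the $2$-power degree instead of being quoted from \cite{CVV20}), it isolates the group-theoretic content cleanly (cyclicity in case 1, degree at most $4$ in case 2, a conductor bound in case 3), and in cases 1 and 3 it yields strictly sharper constraints on $m$ than the uniform three-quadratic-subfield bound; the paper's version buys brevity. One shared caveat: both arguments in fact admit a few values of $m$ that do not literally match the three displayed shapes (e.g.\ $m=12$, for which $\zeta_{12}+\zeta_{12}^{-1}=\sqrt3\in K^{2,1}$, or $m=4p_1$); this is an imprecision in the wording of the corollary rather than a defect of either proof, and it does not affect the finiteness conclusion needed for Theorem \ref{main2}.
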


Theorem \ref{main2} follows. Indeed, Following \cite[Section 2]{CVV20}, we know that the $\JR$-number is $4$ and is a minimum if and only in $\Ocal^{\nu,x_0}$ there are infinitely many numbers of the form $\zeta_m+\zeta_m^{-1}$. By Theorem \ref{main}, the field $K^{\nu,x_0}$ has at most three quadratic subextensions, so by Corollary \ref{CorFermat}, there are at most finitely many possible $m$ that are not of the form $2^r$, $r\ge2$, such that $\zeta_m+\zeta_{m}^{-1}$ is in $K^{\nu,x_0}$, since square-roots of prime numbers are linearly independent over $\Q$. By Corollary \ref{CorKnux020}, there are only finitely many possible $m$ of the form $2^r$. So there are only finitely many possible numbers of the form $\zeta_m+\zeta_m^{-1}$, hence the conclusion.

Note that item 3 of Theorem \ref{main} shows in particular that the structure of subfields of $K^{2,1}_{n}$ is the same as that of the cyclotomic field $\Q(\zeta_{2^{n}})$ (see \cite[Prop. C12 and Fig. C6, page 120--121]{Mi14}).

In order to prove Theorem \ref{main}, we prove some general results about $2$-towers of number fields that may be of independent interest: the existence of subfields different from $K_n$ is reflected in the Galois groups of certain quartic extensions within the tower ---  see Theorem \ref{generictower} and Corollary \ref{lemgenericsubtower}. 

In Section \ref{secThinnessGeneral}, we define the concept of thinness for $p$-towers, and give a characterization of thinness for $2$-towers in general. In Section \ref{NotFacts}, we introduce notation, state some known facts about the towers that are involved in Theorem \ref{main}, and some general facts about quartic extensions. In Section \ref{secQuarticGen}, we study the quartic extensions that lie within our towers $(K_n)$. 

We prove item 2 in Section \ref{secOmega1} using Siegel's finiteness Theorem (see Lemma \ref{lemEllCurves}). We prove item 3 in Section \ref{subsecnu2}. Finally, in Section \ref{secSqrt2}, we characterize the pairs $(\nu,x_0)$ such that $\sqrt{2}\in K$, the pairs such that $\sqrt{2+\sqrt{2}}\in K$ and we prove Corollaries \ref{CorKnux020} and \ref{CorFermat}. 

Inspired by Corollary \ref{CorKnux020}, we ask:
\begin{question}\label{quest}
For which pairs $(\nu,x_0)$ and $(\nu',x_0')$ do we have $K^{\nu,x_0}\subseteq K^{\nu',x_0'}$?
\end{question}

The smallest known $\JR$-number of a field is $\lceil 2\sqrt{2}\rceil +2\sqrt{2}\approx 5.828$, which was obtained in \cite{GR19}. The $\JR$-number of $\Z^{4,3}$ is $\lfloor\alpha\rfloor+\alpha+1\approx 5.562$, with $\alpha=\frac{1+\sqrt{17}}{2}$ --- see \cite{VV15}. So the $\JR$-number of $K^{4,3}$ is less than $5.562$. 
\begin{problem}\label{prob}
Find the $\JR$-number of $K^{4,3}$. 
\end{problem}

As a last comment, note that most of the ingredients in this paper extend straightforward to the non-totally real case, but in general the structure of subfields outwits us.

\begin{figure}[htp]
\centering
\begin{tikzpicture}[scale=1.0]
\draw (0,0) node{$\Q$}; 
\draw (0,0.3) -- (0,0.7); \draw (0,1) node{$K^{\nu,x_0}_1$}; 
\draw (0,1.3) -- (0,1.7); \draw (0,2) node{$K^{\nu,x_0}_2$}; 
\draw (0,2.3) -- (0,2.7); \draw (0,3) node{$K^{\nu,x_0}_3$}; 
\draw [dotted] (0,3.3) -- (0,4.0); \draw (0,4.3) node{$K^{\nu,x_0}$}; 
\draw (-3,1) node{$\Q\left(\sqrt{2(\nu-a)}\right)$};
\draw (-0.4,0.1) -- (-1.6,0.8); \draw (-1.6,1.1) -- (-0.4,1.8); 
\draw (3,1) node{$\Q\left(\sqrt{2(\nu+a)}\right)$};
\draw (0.4,0.1) -- (1.6,0.8); \draw (0.4,1.8) -- (1.6,1.1); 
\end{tikzpicture}
\caption{The lattice of subfields of $K^{\nu,x_0}$ when $u_0-x_0$ is a square and $\nu\ge3$.}\label{fig:SubCampNot21Intro}
\end{figure}
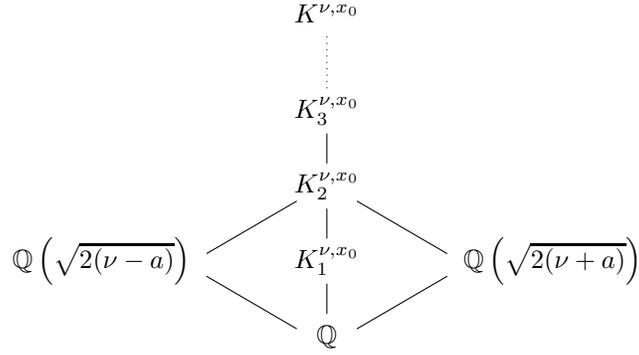

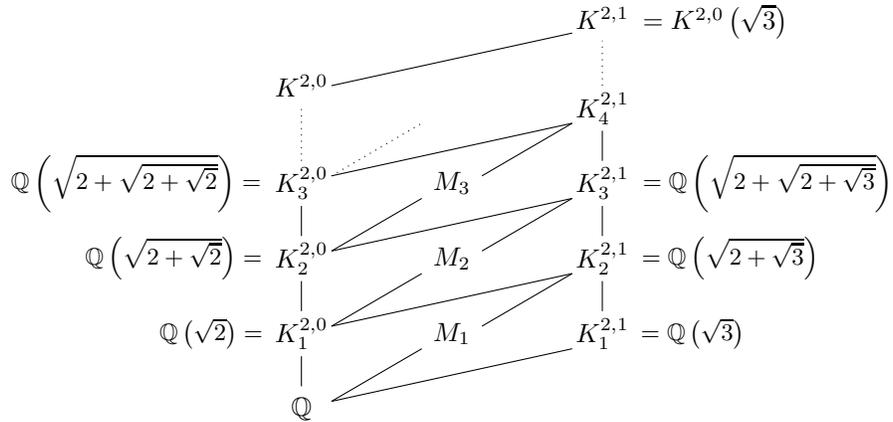
\begin{figure}[htp]
\centering
\begin{tikzpicture}[scale=1.0]
\draw (-2,0) node{$\Q$}; 
\draw  (-1.6,0.1) -- (-0.4,0.8); 
\draw (0,1) node{$M_1$}; \draw (0,2) node{$M_2$}; \draw (0,3) node{$M_3$}; 
\draw (-1.6,0.1) -- (1.6,0.8); \draw (2,1) node{$K^{2,1}_1$}; 
\draw (3.2,1) node{\text{\small $=\Q\left(\sqrt3\right)$}};
\draw (2,1.3) -- (2,1.7); \draw (2,2) node{$K^{2,1}_2$}; 
\draw (3.7,2) node{\text{\small $=\Q\left(\sqrt{2+\sqrt{3}}\right)$}};
\draw (2,2.3) -- (2,2.7); \draw (2,3) node{$K^{2,1}_3$}; 
\draw (4.2,3) node{\text{\small $=\Q\left(\sqrt{2+\sqrt{2+\sqrt{3}}}\right)$}};
\draw (2,3.3) -- (2,3.7); \draw (2,4) node{$K^{2,1}_4$}; 
\draw [dotted] (2,4.3) -- (2,4.9); \draw (2,5.2) node{$K^{2,1}$}; 
\draw (3.5,5.15) node{$=K^{2,0}\left(\sqrt3\right)$};
\draw (-2,0.3) -- (-2,0.7); \draw (-2,1) node{$K^{2,0}_1$}; 
\draw (-3.2,1) node{\text{\small $\Q\left(\sqrt2\right)=$}};
\draw (-2,1.3) -- (-2,1.7); \draw (-2,2) node{$K^{2,0}_2$}; 
\draw (-3.7,2) node{\text{\small $\Q\left(\sqrt{2+\sqrt{2}}\right)=$}};
\draw (-2,2.3) -- (-2,2.7); \draw (-2,3) node{$K^{2,0}_3$}; 
\draw (-4.2,3) node{\text{\small $\Q\left(\sqrt{2+\sqrt{2+\sqrt{2}}}\right)=$}};
\draw [dotted] (-2,3.3) -- (-2,4.0); \draw (-2,4.3) node{$K^{2,0}$}; 
\draw (0.4,1.1) -- (1.6,1.8); 
\draw (0.4,2.1) -- (1.6,2.8); 
\draw (0.4,3.1) -- (1.6,3.8); 
\draw (-1.6,1.1) -- (-0.4,1.8); 
\draw (-1.6,2.1) -- (-0.4,2.8); 
\draw [dotted] (-1.6,3.1) -- (-0.4,3.8); 
\draw (-1.6,1.1) -- (1.6,1.8); 
\draw (-1.6,2.1) -- (1.6,2.8); 
\draw (-1.6,3.1) -- (1.6,3.8); 
\draw (-1.6,4.3) -- (1.6,5); 
\end{tikzpicture}
\caption{The lattice of subfields of $K^{2,1}$}\label{fig:SubCamp21Intro}
\end{figure}

\section{Thinness for $2$-towers}\label{secThinnessGeneral}

\begin{definition}
\begin{itemize}
\item Given a rational prime number $p$, a \emph{$p$-tower} is a sequence (which may be finite) $(F_n)_{n\ge0}$ of subfields of an algebraic closure $\tilde\Q$ of $\Q$ such that for each $n$ we have $[F_{n+1}\colon F_n]=p$.
\item We say that a field extension $F/F_0$ has a \emph{$p$-tower representation} if it is the union of the fields of a $p$-tower starting from $F_0$. 
\item We call a $p$-tower $(F_n)_{n\ge0}$ \emph{thin} if the only finite extensions of $F_0$ which are subfields of $\cup_{n}F_n$ are the $F_n$. 
\end{itemize}
\end{definition}

\begin{remark}\label{remgeneralthin}
Note that if a tower $(F_n)_{n\ge0}$ is thin, then the subfields of $F=\cup_{n}F_n$ which contain $F_0$ are the $F_n$ and $F$. Indeed, for an infinite tower, if $L$ is any such subfield, it is generated over $F_0$ by a family $(x_m)_{m\ge0}$ of elements of $F$. Since for each $m$, $F_0(x_0,\dots,x_m)$ is a finite extension of $F_0$ and the tower is thin, it is equal to some $F_{k_m}$. If $L$ has infinite degree over $F_0$, then the sequence $(F_{k_m})_{m\ge0}$ is increasing, hence $L=\cup_{m\ge0}F_{k_m}=\cup_{n\ge0}F_{n}=F$. So $F$ is the only intermediate field which has infinite degree over $F_0$. 
\end{remark}

Note that if we only know that \emph{for some $k>0$}, the tower $(F_n)_{n\ge k}$ is thin, and $L$ is a subfield of $F=\cup_{n\ge0}F_n$ which contains $F_0$, then $L$ may have infinite degree over $F_0$ and be different from $F$ (as in Item 3 of Theorem \ref{main}). 

We will use the following well known lemma in many occasions. 

\begin{lemma}\label{Garling}
Let $F/K_1$ and $F/K_2$ be finite Galois field extensions with Galois group $G_1$ and $G_2$ respectively. The extension $F/K_1\cap K_2$ is a finite Galois extension if and only if the group $G$ generated by $G_1$ and $G_2$ is a finite group. If this is the case, then $G$ is isomorphic to $\Gal(G/K_1\cap K_2)$. 
\end{lemma}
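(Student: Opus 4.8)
The idea is to realize both $G_1$ and $G_2$ as subgroups of the automorphism group $\Aut(F)$ and then invoke Artin's lemma. Since $F/K_i$ is finite Galois, I identify $G_i$ with $\{\sigma\in\Aut(F)\colon \sigma|_{K_i}=\mathrm{id}\}$; then $|G_i|=[F:K_i]<\infty$ and the fixed field is $F^{G_i}=K_i$. Put $G=\langle G_1,G_2\rangle\le\Aut(F)$. The key preliminary step is the identity $F^{G}=K_1\cap K_2$: an element $x\in F$ is fixed by all of $G$ if and only if it is fixed by every generator, i.e. by every element of $G_1$ and of $G_2$, which means $x\in F^{G_1}\cap F^{G_2}=K_1\cap K_2$.

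For the ``if'' direction, assume $G$ is finite. By Artin's lemma, $F/F^{G}$ is a finite Galois extension and the canonical map $G\to\Gal(F/F^{G})$ is an isomorphism. Since $F^{G}=K_1\cap K_2$, this says precisely that $F/(K_1\cap K_2)$ is finite Galois and that $G\cong\Gal(F/(K_1\cap K_2))$, delivering both the conclusion and the claimed isomorphism in one stroke.

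For the ``only if'' direction, assume $F/(K_1\cap K_2)$ is a finite Galois extension, and let $H=\Gal(F/(K_1\cap K_2))$, which is finite. Because $K_1\cap K_2\subseteq K_i$, every automorphism in $G_i$ fixes $K_1\cap K_2$ pointwise, so $G_i\subseteq H$ for $i=1,2$; hence $G=\langle G_1,G_2\rangle\subseteq H$ is finite. The only genuinely substantive points are the computation $F^{G}=K_1\cap K_2$ and the passage, via Artin's lemma, from a finite group of automorphisms of $F$ to a Galois extension of the fixed field having that group; everything else is formal bookkeeping. (Here $\Gal(F/(K_1\cap K_2))$ is the intended reading of the group in the statement.)
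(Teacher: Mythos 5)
Your proof is correct and complete. The paper does not actually prove this lemma --- it only cites an exercise in Garling's book --- and your argument (realizing $G_1,G_2$ as subgroups of $\Aut(F)$, computing $F^{\langle G_1,G_2\rangle}=F^{G_1}\cap F^{G_2}=K_1\cap K_2$, applying Artin's lemma for the ``if'' direction, and using the containment $G\le\Gal(F/(K_1\cap K_2))$ for the ``only if'' direction) is exactly the standard solution to that exercise, so there is nothing to reconcile. You are also right that ``$\Gal(G/K_1\cap K_2)$'' in the statement is a typo for $\Gal(F/(K_1\cap K_2))$.
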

\begin{proof}
See for instance \cite[Ch. 11, ex. 11.10, p. 98]{Ga86}).
\end{proof}

Since we will use this lemma only for $F$ being a number field, the hypothesis on the finiteness of $G$ is automatically satisfied. 

The following lemma is inspired by a private communication with P. Gillibert and G. Ranieri. We provide our own proof based on their idea. 

\begin{theorem}\label{generictower}
Let $(F_n)_{n\ge0}$ be a $2$-tower. Assume that $F_0$ is a number field. The tower $(F_n)_{n\ge0}$ is not thin if and only if there exists $n\ge2$ such that $F_{n}/F_{n-2}$ is Galois and its Galois group is the Klein group. 
\end{theorem}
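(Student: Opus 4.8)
The plan is to prove both directions, with the forward (non-thin $\Rightarrow$ Klein group) direction being the substantive one. For the easy direction, suppose some $F_n/F_{n-2}$ is Galois with Galois group the Klein four-group $V_4 \cong \Z/2\Z \times \Z/2\Z$. Then $\Gal(F_n/F_{n-2})$ has three subgroups of order $2$, hence by the Galois correspondence there are three intermediate fields strictly between $F_{n-2}$ and $F_n$, each of degree $2$ over $F_{n-2}$. One of them is $F_{n-1}$, but the other two are finite extensions of $F_0$ (since $F_0$ is a number field and $F_{n-2}$ is finite over $F_0$) contained in $\cup_m F_m$ that are not equal to any $F_m$: indeed a field $L$ with $F_{n-2} \subsetneq L \subsetneq F_n$ and $L \ne F_{n-1}$ cannot be $F_{n-1}$, cannot be $F_{n-2}$ or $F_n$, and cannot be any other $F_m$ because $F_m \subseteq F_{n-2}$ for $m \le n-2$ while $F_m \supseteq F_n$ for $m \ge n$. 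Hence the tower is not thin.

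For the hard direction, assume the tower is not thin, so there is a finite extension $L/F_0$ with $L \subseteq \cup_m F_m$ but $L \ne F_m$ for all $m$. Since $L$ is finite over $F_0$, it is contained in some $F_N$; take $N$ minimal with $L \subseteq F_N$. Because $[F_m : F_0] = 2^m$ is strictly increasing, $L \ne F_N$ forces $[L:F_0] < 2^N$, so $L \not\supseteq F_{N-1}$ (otherwise $L$ would be an intermediate field of the degree-$2$ extension $F_N/F_{N-1}$, hence $L = F_{N-1}$ or $L = F_N$, and minimality of $N$ rules out $L = F_{N-1}$). Thus $L$ and $F_{N-1}$ are two distinct intermediate fields of $F_N/F_{N-2}$ (note $L \supseteq$ nothing forces $L \supseteq F_{N-2}$ a priori, so I will first replace $L$ by $L \cdot F_{N-2}$ — which is still inside $F_N$, still finite over $F_0$, still not equal to $F_{N-1}$ since it properly contains $F_{N-2}$ and is properly contained in $F_N$ because $L \not\subseteq F_{N-1}$ would be needed... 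I need to argue the compositum is still proper in $F_N$, which holds as long as $L \not\subseteq F_{N-1}$, established above, combined with $[F_N:F_{N-2}]=4$). So now inside the quartic extension $F_N/F_{N-2}$ we have at least two distinct intermediate quadratic subfields, $F_{N-1}$ and $L' := L\cdot F_{N-2}$.

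The key point is then: a quartic extension $E/k$ that contains two distinct quadratic subfields must be Galois with group $V_4$. Indeed, let $k \subsetneq k_1, k_2 \subsetneq E$ be the two distinct quadratic subfields; then $E = k_1 k_2$ (since $k_1 k_2$ properly contains $k_1$ and has degree dividing $4$), each $k_i/k$ is Galois of degree $2$, so by Lemma \ref{Garling} applied to $E/k_1$ and $E/k_2$ with $k_1 \cap k_2 = k$, the extension $E/k$ is Galois with Galois group $G$ generated by $\Gal(E/k_1)$ and $\Gal(E/k_2)$ — two distinct order-$2$ subgroups of a group of order $4$, which therefore must be $V_4$ (a cyclic group of order $4$ has a unique subgroup of order $2$). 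Applying this with $k = F_{N-2}$, $E = F_N$ gives that $F_N/F_{N-2}$ is Galois with Klein group, and we take $n = N$.

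\textbf{Main obstacle.} The one place requiring care is the bookkeeping at the start of the hard direction: given an arbitrary non-$F_m$ subfield $L$, one must manufacture from it a field sitting strictly between $F_{N-2}$ and $F_N$ and distinct from $F_{N-1}$. The clean way is to pass to the compositum $L' = L \cdot F_{N-2}$ and verify (i) $F_{N-2} \subsetneq L'$, (ii) $L' \subsetneq F_N$, and (iii) $L' \ne F_{N-1}$ — all of which follow from the minimality of $N$ together with the fact that $L \not\subseteq F_{N-1}$, which in turn follows from minimality of $N$ and $L \ne F_{N-1}$ via the Galois correspondence for the degree-$2$ step $F_N/F_{N-1}$. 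Everything else is a direct application of Lemma \ref{Garling} and elementary group theory of order-$4$ groups.
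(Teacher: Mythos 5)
Your easy direction is fine, and your ``key point'' (a quartic extension admitting two distinct quadratic subextensions is Galois with Klein group, via Lemma \ref{Garling}) is correct. The gap is in the hard direction, at exactly the step you flag as the main obstacle: the claim that $L'=L\cdot F_{N-2}$ is \emph{proper} in $F_N$ does not follow from $L\not\subseteq F_{N-1}$ together with $[F_N:F_{N-2}]=4$, and it is false in general. Take the $2$-tower $\Q\subset\Q(\sqrt2)\subset\Q(\sqrt2,\sqrt3)\subset\Q(\sqrt2,\sqrt3,\sqrt5)$ and the witness $L=\Q(\sqrt5,\sqrt6)$ of non-thinness: here $N=3$ and $L\not\subseteq F_2$, but $L\cdot F_1=\Q(\sqrt2,\sqrt5,\sqrt6)=F_3$, so $L'$ is all of $F_N$ and you obtain no second quadratic subextension of $F_N/F_{N-2}$. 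The theorem's conclusion still holds for this tower, but your argument does not reach it from this perfectly legitimate choice of witness, and nothing in your setup excludes such a choice.

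Producing a field strictly between $F_{N-2}$ and $F_N$ other than $F_{N-1}$ is in fact the crux of the theorem, and the paper needs two further ingredients for it. First, an extremal choice: take $n$ minimal such that a ``bad'' field exists inside $F_n$, and then a \emph{maximal} bad field $K\subseteq F_n$; one then shows $KF_{n-1}=F_n$, $K\cap F_{n-1}=F_j$ for some $j\le n-2$, and $[F_n:K]=2$, so that $F_n/F_j$ is Galois by Lemma \ref{Garling}. Second --- and this is the step your compositum trick cannot replace, because even this maximal $K$ need not contain $F_{n-2}$ (one may have $j<n-2$) --- a group-theoretic argument: the $2$-group $\Gal(F_n/F_j)$ has a central involution $\sigma$, and $\sigma$ together with the generator $\tau_2$ of $\Gal(F_n/F_{n-1})$ generates a Klein subgroup $V$ containing $\tau_2$; its fixed field is a degree-$2$ subfield of $F_{n-1}$, hence equals $F_{n-2}$ by minimality of $n$, so $F_n/F_{n-2}$ is Galois with group $V\cong V_4$. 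You would need an argument of this kind (or some other justification that a suitable witness yields a quadratic subextension of some $F_N/F_{N-2}$ distinct from $F_{N-1}$) to close the gap.
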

\begin{proof}
From right to left this is obvious. We prove the other direction. Write 
$$
\Fcal_{<n}=\{F_0\subsetneq K\subsetneq F_n\colon K\ne F_i \textrm{ for every $i$}\}.
$$ 
Assume that there exists an intermediate field different from the $F_i$. Choose $n$ minimal such that $\Fcal_{<n}$ is non empty (so $n\ge2$). Choose $K$ maximal in $\Fcal_{<n}$. 

By minimality of $n$, on the one hand, there exists $j<n$ such that $K\cap F_{n-1}=F_j$, and on the other hand, $K\nsubseteq F_{n-1}$, hence $F_{n-1}\subsetneq KF_{n-1}\subseteq F_n$, hence $KF_{n-1}=F_n$ because $[F_n:F_{n-1}]=2$. In particular we have $F_{n-1}\nsubseteq K$, which implies $j\le n-2$, as otherwise we would have $j=n-1$, hence $F_{n-1}=F_j=K\cap F_{n-1}\subseteq K$.

We have $F_{j+1}\nsubseteq K$, as otherwise, since $j+1\le n-1$, we would have $F_{j+1}=F_{j+1}\cap F_{n-1}\subseteq K\cap F_{n-1}=F_j$, which is absurd. Therefore, we have $K\subsetneq KF_{j+1}$. If $KF_{j+1}\subsetneq F_n$, then $KF_{j+1}\in\Fcal_{<n}$, so by maximality of $K$, we obtain a contadiction. So we have $KF_{j+1}=F_n$. 

Since $F_{j+1}/F_j$ has degree $2$, we have 
\begin{multline}\notag
2[K:F_j]=[F_{j+1}:F_j][K:F_j]\ge [KF_{j+1}:F_j]=
[KF_{j+1}:K][K:F_{j}]=[F_n:K][K:F_j],
\end{multline}
so $[F_n:K]=2$. Since $F_n/K$ and $F_n/F_{n-1}$ are Galois, also $F_n/K\cap F_{n-1}=F_n/F_j$ is Galois by Lemma \ref{Garling}. 

Let $G$ be the Galois group of $F_n/F_j$. Let $\tau_1$ be a generator of $\Aut(F_n/K)$ and $\tau_2$ be a generator of $\Aut(F_n/F_{n-1})$ (so both $\tau_1$ and $\tau_2$ have order $2$). Since the Galois group $G$ of $F_n/F_j$ is a $2$-group, it has a non-trivial center, which by Cauchy's lemma has an element $\sigma$ of order $2$. Let $i$ be such that $\sigma\ne\tau_i$. Let $V$ be the group $\langle\tau_1,\tau_2\rangle=\langle\sigma,\tau_i\rangle$ if $\sigma$ is any of $\tau_1$ or $\tau_2$, and $V=\langle\sigma,\tau_2\rangle$ otherwise. Since $\sigma$ lies in the center of $G$, the order of both $\sigma\tau_1$ and $\sigma\tau_2$ is $2$, so $V$ is the Klein group. Let $L$ be the fixed field of $V$, so that $[F_{n}\colon L]=4$. Since in all three cases we have $\langle\tau_2\rangle\subseteq V$, by Galois correspondence $L$ is a subfield of $F_{n-1}$ and $[F_{n-1}:L]=2$. By minimality of $n$, $L$ is indeed $F_{n-2}$.
\end{proof}

It is clear that Theorem \ref{generictower} can be formulated in more general situations using the full power of Lemma \ref{Garling}. 

\begin{question}\label{questotherprimes}
Is there an analogue of Theorem \ref{generictower} for odd primes? 
\end{question}

\begin{lemma}\label{genericsubtower2} 
Let $\ell\ge2$ be an integer. Let $(F_n)_{n\ge0}$ be a $2$-tower and $F=\cup_{n\ge0} F_n$. If the tower $(F_n)_{n\ge 1}$ is thin and $L$ is a subfield of $F$ which contains $F_0$, then either $L$ is a subfield of $F_{\ell}$, or $[L:F_0]\ge2^{\ell}$. So if $L$ has degree $2^k$ for some $k\ge1$, then $L$ is a subfield of $F_{k+1}$. 
\end{lemma}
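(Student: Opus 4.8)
The plan is to argue by strong induction on $\ell$, using the structure provided by Remark \ref{remgeneralthin} applied to the thin tower $(F_n)_{n\ge1}$. The base case $\ell=1$ is immediate: if $L$ is not contained in $F_1$, then since $[L:F_0]$ divides nothing a priori, we note instead that $L F_1$ strictly contains $F_1$; but $L$ contains $F_0$ and is not inside $F_1$, so $L\ne F_0$ and hence $[L:F_0]\ge 2$, which is $2^1$. (In fact for $\ell=1$ there is nothing to prove beyond $[L:F_0]\ge 2$ whenever $L\ne F_0$.)

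For the inductive step, suppose the statement holds for $\ell-1$ and let $L\supseteq F_0$ be a subfield of $F$ which is \emph{not} contained in $F_\ell$; I must show $[L:F_0]\ge 2^\ell$. First I would consider $L\cap F_\ell$. Since $(F_n)_{n\ge1}$ is thin, Remark \ref{remgeneralthin} tells us the subfields of $F$ containing $F_1$ are exactly the $F_n$ together with $F$; I would like to leverage this to control $L\cap F_\ell$. Two cases arise. If $L\subseteq F_{\ell-1}$, then by the induction hypothesis applied with $\ell-1$ either $L\subseteq F_{\ell-1}$ gives no information, so instead I treat this as: $L$ is a subfield of $F$ not contained in $F_\ell$, so a fortiori not contained in $F_{\ell-1}$, contradicting $L\subseteq F_{\ell-1}$. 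Hence $L\nsubseteq F_{\ell-1}$. Now apply the induction hypothesis to $L$: since $L\nsubseteq F_{\ell-1}$, we get $[L:F_0]\ge 2^{\ell-1}$. Thus $L$ is a number field (finite over $F_0$) of degree at least $2^{\ell-1}$, and I must upgrade this to $2^\ell$.

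To get the extra factor of $2$, I would form the compositum $LF_{\ell-1}$. Since $L\nsubseteq F_{\ell-1}$ we have $F_{\ell-1}\subsetneq LF_{\ell-1}$, and since $L$ is finite over $F_0$ the compositum $LF_{\ell-1}$ is finite over $F_0$, hence a subfield of some $F_m$ with $m\ge\ell$; being a subfield of $F$ containing $F_1$, thinness forces $LF_{\ell-1}=F_m$ for some $m\ge \ell$. Set $K=L\cap F_{\ell-1}$. Then $[L:F_0]=[L:K][K:F_0]$ and $[LF_{\ell-1}:F_{\ell-1}]=[F_m:F_{\ell-1}]=2^{m-(\ell-1)}\ge 2$. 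The key inequality $[LF_{\ell-1}:F_{\ell-1}]\mid [L:K]$ (or rather $[L:K]\ge[LF_{\ell-1}:F_{\ell-1}]$, valid because $LF_{\ell-1}/F_{\ell-1}$ is a "translate" of $L/K$) combined with $[K:F_0]\ge$ (induction hypothesis applied to $K\subseteq F_{\ell-1}$, which either puts $K$ inside $F_{\ell-1}$ — giving $[K:F_0]=2^{\,t}$ for the index such that $K\subseteq F_t$ by thinness, so $K=F_t$ — or $[K:F_0]\ge 2^{\ell-1}$) lets me combine the two-power contributions. The cleanest route: since $L$ is finite over $F_0$ of $2$-power-related degree is not automatic, so instead I would directly show $[L:F_0]$ is a power of $2$ by noting $L\subseteq F_m$ is inside a $2$-tower over $F_0$ so $[F_m:F_0]=2^m$ and $[L:F_0]\mid 2^m$. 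Hence $[L:F_0]=2^s$ for some $s$, and the induction hypothesis (applied at level $\ell-1$, since $L\nsubseteq F_{\ell-1}$) gives $s\ge \ell-1$; I then rule out $s=\ell-1$ by a parity/compositum argument: if $[L:F_0]=2^{\ell-1}$ and $L\subseteq F_m$, then $K=L\cap F_{\ell-1}$ satisfies $[F_m:F_{\ell-1}]=[L:K]$ and $[F_m:F_0]=2^m$, forcing $[K:F_0]=2^{\ell-1}\cdot 2^{\ell-1}/2^m=2^{2\ell-2-m}$, and since $K\subseteq F_{\ell-1}$ thinness gives $K=F_{2\ell-2-m}$ with $2\ell-2-m\le \ell-1$; working this through forces $L\subseteq F_\ell$, the desired contradiction.

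The main obstacle I anticipate is the bookkeeping in this last step: establishing cleanly that $[LF_{\ell-1}:F_{\ell-1}]=[L:L\cap F_{\ell-1}]$ requires knowing that $L/(L\cap F_{\ell-1})$ and $LF_{\ell-1}/F_{\ell-1}$ have equal degree, which is \emph{not} true for arbitrary field extensions but \emph{is} true here because $F_{\ell-1}/F_0$ sits inside the $2$-tower and one can arrange a Galois hull — this is exactly where Lemma \ref{Garling} (or a linear disjointness argument over the Galois closure) does the work. Once that identity is in hand, the degree arithmetic closes the induction, and the final sentence ("if $L$ has degree $2^k$, then $L\subseteq F_{k+1}$") follows by taking $\ell=k+1$ and observing that $[L:F_0]=2^k<2^{k+1}$ rules out the second alternative.
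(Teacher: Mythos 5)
Your argument has two genuine gaps, both stemming from the choice to take the compositum with $F_{\ell-1}$ instead of with $F_1$. First, the equality $[LF_{\ell-1}:F_{\ell-1}]=[L:L\cap F_{\ell-1}]$ on which your ``rule out $s=\ell-1$'' step relies is not available: neither $L/(L\cap F_{\ell-1})$ nor $F_m/F_{\ell-1}$ is Galois in general (the tower is merely a $2$-tower), so Lemma \ref{Garling} does not apply, and only the inequality $[LF_{\ell-1}:F_{\ell-1}]\le[L:L\cap F_{\ell-1}]$ holds in general. Second, you invoke thinness to conclude $L\cap F_{\ell-1}=F_{2\ell-2-m}$, but thinness is assumed only for the tower $(F_n)_{n\ge1}$, i.e., it classifies the subfields containing $F_1$; the field $K=L\cap F_{\ell-1}$ is only known to contain $F_0$, so it may well be a subfield of $F_{\ell-1}$ that is none of the $F_n$ --- this is exactly the situation the lemma is designed to accommodate (cf.\ item 2 of Theorem \ref{main}). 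Your first thread (bounding $[L:F_0]=[L:K][K:F_0]$) stalls for the same reason: the induction hypothesis gives no lower bound on $[K:F_0]$ once $K\subseteq F_{\ell-1}$.

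The paper's proof needs no induction and sidesteps both issues by composing with $F_1$ rather than $F_{\ell-1}$: since $F_1L$ contains $F_1$ and is finite over it, thinness from level $1$ gives $F_1L=F_j$ for some $j$, and $L\nsubseteq F_\ell$ forces $j\ge\ell+1$; then $2[L:F_0]=[F_1:F_0][L:F_0]\ge[F_1L:F_0]=2^j$ yields $[L:F_0]\ge2^{j-1}\ge2^\ell$. Composing with the degree-$2$ field $F_1$ costs at most a factor of $2$ in degree while immediately landing in the thin part of the tower, which is precisely the leverage your route is missing.
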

\begin{proof}
Assume that $L$ is not a subfield of $F_{\ell}$. If $L$ has infinite degree over $F_0$, there is nothing to prove. Assume that $L$ has finite degree over $F_0$, so also $F_1L$ has finite degree over $F_0$, hence $F_1L$ has finite degree over $F_1$. Since $(F_n)_{n\ge 1}$ is thin, there is some $j\ge 1$ such that $F_1L=F_j$. Since $L$ is not a subfield of $F_{\ell}$, we have $j\ge\ell+1$. Since
$$
2[L:F_0]=[F_1:F_0][L:F_0]\ge [F_1L:F_0]=[F_j:F_0]=2^j,
$$
we have $[L:F_0]\ge 2^{j-1}\ge 2^{\ell}$.
\end{proof}

If $F_0$ is a number field, $\Fcal=(F_n)_{n\ge0}$ is a $2$-tower, $(F_n)_{n\ge 1}$ is thin, $F=\cup_{n\ge0} F_n$, and $\ell\ge 2$ is an integer, we let 
$$
\Fcal_\ell=\{L\colon F_0\subseteq L\subseteq F\wedge [L:F_0]=2^\ell\wedge L\ne F_\ell\}. 
$$
If some $\Fcal_\ell$ is non-empty, we denote by $\ell_\Fcal$ the minimum of the set of $\ell$ such that $\Fcal_\ell$ is non-empty.

\begin{corollary}\label{lemgenericsubtower}
Let $F_0$ be a number field. Let $\Fcal=(F_n)_{n\ge0}$ be a $2$-tower and $F=\cup_{n\ge0} F_n$. Assume that the tower $(F_n)_{n\ge1}$ is thin. If some $\Fcal_\ell$ is non-empty, then $F_{\ell_\Fcal+1}/F_2$ is Galois. 
\end{corollary}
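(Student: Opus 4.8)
The plan is to combine the thinness hypothesis with Theorem~\ref{generictower}. Set $\ell=\ell_\Fcal$ and pick $L\in\Fcal_\ell$, so $[L:F_0]=2^\ell$ and $L\ne F_\ell$. Since the tower $(F_n)_{n\ge1}$ is thin, Lemma~\ref{genericsubtower2} applies to the subfield $L$: because $L$ has degree $2^\ell$ over $F_0$ and $L\ne F_\ell$ (so in particular $L$ is not a subfield of $F_{\ell-1}$ — here I must be a little careful, since \emph{not being $F_\ell$} is not literally the same as \emph{not being a subfield of $F_{\ell-1}$}; but if $L\subseteq F_{\ell-1}$ then $[L:F_0]=2^\ell$ forces $L=F_{\ell-1}$, which is a subfield of $F_{\ell-2}$ of index $2$, contradicting minimality of $\ell$ unless $\ell-1<2$, a small-case check), the conclusion of the lemma shows $L$ is a subfield of $F_{\ell+1}$. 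Thus $F_0\subsetneq L\subsetneq F_{\ell+1}$ and $L$ differs from every $F_i$ with $i\le\ell+1$ (if $L=F_i$ for some $i$, then comparing degrees gives $i=\ell$, excluded).

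Now I would invoke Theorem~\ref{generictower} applied to the \emph{finite} $2$-tower $(F_0,F_1,\dots,F_{\ell+1})$, whose base $F_0$ is a number field. The field $L$ witnesses that this tower is not thin, so by Theorem~\ref{generictower} there is some $m$ with $2\le m\le\ell+1$ such that $F_m/F_{m-2}$ is Galois with Galois group the Klein four-group. I claim $m=\ell_\Fcal+1$. Indeed, if $m\le\ell$ then $F_m/F_{m-2}$ being the Klein group produces (by Galois correspondence) a subfield $M$ of $F_m$ with $F_{m-2}\subsetneq M\subsetneq F_m$ and $M\ne F_{m-1}$; then $[M:F_0]=2^{m-1}\le2^{\ell-1}<2^\ell$, and $M$ is a proper subfield of $F$ containing $F_0$ that is not any $F_i$ (degree count again), so $M\in\Fcal_{m-1}$ with $m-1<\ell$, contradicting the minimality defining $\ell_\Fcal$ — provided $m-1\ge2$; the edge case $m=2$, hence $\ell_\Fcal=1$, is excluded because $\Fcal_\ell$ is only defined for $\ell\ge2$, so $\ell_\Fcal\ge2$ and thus $m\ge3>2$, meaning the only remaining possibility is $m=\ell+1=\ell_\Fcal+1$. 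This gives exactly that $F_{\ell_\Fcal+1}/F_{\ell_\Fcal-1}$ is Galois with Klein group.

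Finally I would upgrade ``$F_{\ell_\Fcal+1}/F_{\ell_\Fcal-1}$ is Galois'' to ``$F_{\ell_\Fcal+1}/F_2$ is Galois''. Write $d=\ell_\Fcal-1\ge1$ and $N=F_{\ell_\Fcal+1}=F_{d+2}$, so $N/F_d$ is Galois. I would argue downward: the fact that $\Fcal_\ell$ is empty for all $\ell<\ell_\Fcal$ means that for every $k$ with $2\le k\le d+1=\ell_\Fcal$, the only subfield of $N$ of degree $2^k$ over $F_0$ is $F_k$ itself, and more generally the subfields of $F_{d+1}$ containing $F_0$ are exactly $F_0,F_1,\dots,F_{d+1}$ (any other would lie in $\Fcal_j$ for some $j\le d+1$; if $j<\ell_\Fcal$ this contradicts minimality, and $j=\ell_\Fcal=d+1$ is impossible since such a field would have to be distinct from $F_{d+1}$ yet of the same degree, but it is a subfield of $F_{d+1}$, forcing equality). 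Hence $F_{d+1}/F_2$ is a $2$-tower all of whose intermediate fields form a chain, i.e.\ $\Gal$-theoretically its "subgroup picture" between $\Gal(N/F_2)$ and $\Gal(N/F_{d+1})$ is linearly ordered; combined with $N/F_d$ Galois and the Klein structure at the top, a standard group-theoretic argument (a $2$-group with a unique subgroup of each order below the top is cyclic on that quotient) shows $\Gal(N/F_2)$ is normal in $\Gal(N/\Q)\cap$ the relevant group, hence $N/F_2$ is Galois. The main obstacle I anticipate is precisely this last paragraph: pinning down the normality of $\operatorname{Aut}(N/F_2)$ inside the full group cleanly, rather than the bookkeeping of degree counts and small-$\ell$ edge cases, which are routine. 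A cleaner route may be to observe that $N/F_d$ Galois with Klein group forces $F_d$ to be the intersection of $N$ with a conjugate, and then to show by induction on $k$ descending from $d$ to $2$ that each $F_k/F_{k-?}$ inherits normality from the tower's thinness — but I would first try the direct group-theoretic statement above.
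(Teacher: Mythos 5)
There is a genuine gap at the pivot of your argument: the claim that the index $m$ produced by Theorem \ref{generictower} must equal $\ell_\Fcal+1$. That theorem is purely existential, and the $m$ it yields can be $2$ even when $\ell_\Fcal>2$: if $F_2/F_0$ is Galois with Klein group, the finite tower $(F_0,\dots,F_{\ell+1})$ is already not thin because of the two extra \emph{quadratic} subfields, and these produce no contradiction with the minimality of $\ell_\Fcal$, since they have degree $2$ and $\Fcal_j$ is only defined for $j\ge2$. Your dismissal of this case (``$\ell_\Fcal\ge2$ and thus $m\ge3$'') is a non sequitur: $m$ is not a priori tied to $\ell_\Fcal$ in any way. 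This is not a hypothetical worry --- in the paper's main application (Theorem \ref{lemlatticestructure}) the extension $K_2/K_0$ \emph{is} Klein, so the minimal witness of non-thinness sits at level $2$ while one must still control all larger $\ell$. Consequently your argument never establishes that $F_{\ell_\Fcal+1}/F_{\ell_\Fcal-1}$ is Galois, and the final paragraph (upgrading to $F_{\ell_\Fcal+1}/F_2$), which you yourself flag as the main obstacle and leave as a sketch invoking an undefined ``$\Gal(N/\Q)$'', therefore has nothing to stand on.

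The missing idea is to work with $L\cap F_\ell$ rather than with a Klein subextension. With $\ell=\ell_\Fcal$ and $L\in\Fcal_\ell$: if $L\cap F_\ell$ were some $F_n$ with $n\ge1$, then $F_1\subseteq L$, and thinness of $(F_n)_{n\ge1}$ would force $L$ to be some $F_j$, hence $L=F_\ell$ by degree, a contradiction; so $L\cap F_\ell$ is a proper subfield of $F_\ell$ of degree $2^k$ with $k<\ell$, and $k\ge2$ would put it in $\Fcal_k$, contradicting minimality of $\ell$. Hence $k\le1$, so $L\cap F_\ell\subseteq F_2$ by Lemma \ref{genericsubtower2} (which, as you note, also gives $L\subseteq F_{\ell+1}$). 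Now both $F_{\ell+1}/L$ and $F_{\ell+1}/F_\ell$ are quadratic, hence Galois, so Lemma \ref{Garling} shows that $F_{\ell+1}/(L\cap F_\ell)$ is Galois, and a fortiori $F_{\ell+1}/F_2$ is Galois. No normality argument inside a larger group is needed.
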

\begin{proof}
Let $\ell=\ell_\Fcal$. Let $L\in\Fcal_\ell$. The field $L\cap F_{\ell}$ is not any of the $F_n$ for $1\le n\le\ell$, as otherwise we would have $F_1\subseteq F_n= L\cap F_{\ell}\subseteq L$, hence $L$ would be one of the $F_j$ because the tower is thin from $n=1$, so $L=F_\ell$ for a degree reason, but this contradicts the fact that $L$ lies in $\Fcal_\ell$. In particular, $L\cap F_\ell$ is a proper subfield of $F_\ell$, hence it has degree $2^k$ for some $k<\ell$. If $k\ge2$, then $L\cap F_\ell\in \Fcal_k$ for $k<\ell$, contradicting the minimality of $\ell$. Hence we have $k\le 1$ and $L\cap F_{\ell}$ a subfield of $F_2$ by Lemma \ref{genericsubtower2}. Also by Lemma \ref{genericsubtower2}, $L$ is a subfield of $F_{\ell+1}$. Since the extensions $F_{\ell+1}/L$ and $F_{\ell+1}/F_\ell$ are quadratic, they are Galois, hence $F_{\ell+1}/L\cap F_\ell$ is Galois by Lemma \ref{Garling}, hence $F_{\ell+1}/F_2$ is Galois. 
\end{proof}

\section{Notation and some known facts}\label{NotFacts}

We will use the following three facts (see \cite[Thm. 2 and Thm. 3]{KW89}) from the general theory of quartic polynomials. 

\begin{theorem}\label{thmQuad1}
Let $P(X)=X^4+bX^2+d$ be a polynomial over a field $K$ of characteristic $\ne2$, with roots $\pm\alpha,\pm\beta$. The following conditions are equivalent: 
\begin{enumerate}
\item $P$ is irreducible over $K$. 
\item $\alpha^2$, $\alpha+\beta$ and $\alpha-\beta$ are not in $K$. 
\item $b^2-4d$, $-b+2\sqrt{d}$ and $-b-2\sqrt{d}$ are not squares in $K$.
\end{enumerate}
\end{theorem}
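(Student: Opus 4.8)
The plan is to use condition (2) as a hub: I would prove $(1)\Leftrightarrow(2)$ directly from the shape of $P$, and $(2)\Leftrightarrow(3)$ by translating each membership condition into a condition on $b,d$. Everything rests on the identities coming from $P(X)=(X^2-\alpha^2)(X^2-\beta^2)$: namely $\alpha^2+\beta^2=-b$ and $\alpha^2\beta^2=d$ (so $\alpha^2,\beta^2$ are the roots of $Y^2+bY+d$ over $K$), $(\alpha^2-\beta^2)^2=b^2-4d$, and, after fixing the square root $\gamma=\alpha\beta$ of $d$, $(\alpha\pm\beta)^2=-b\pm2\gamma$. Since $\operatorname{char}K\neq2$ we may divide by $2$ whenever needed.

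For $(2)\Rightarrow(1)$ I would argue by contraposition. If $\alpha^2\in K$, then $\beta^2=-b-\alpha^2\in K$ and $P=(X^2-\alpha^2)(X^2-\beta^2)$ splits over $K$. If $\alpha+\beta\in K$, then $\gamma=\tfrac{1}{2}\bigl(b+(\alpha+\beta)^2\bigr)\in K$, and grouping the roots as $\{\alpha,\beta\}\cup\{-\alpha,-\beta\}$ exhibits $P$ as $\bigl(X^2-(\alpha+\beta)X+\gamma\bigr)\bigl(X^2+(\alpha+\beta)X+\gamma\bigr)$ over $K$; the case $\alpha-\beta\in K$ is symmetric, grouping the roots as $\{\alpha,-\beta\}\cup\{-\alpha,\beta\}$. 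For $(1)\Rightarrow(2)$, assume $P$ reducible over $K$. If it has a root in $K$, that root is one of $\pm\alpha,\pm\beta$, so $\alpha^2\in K$ or $\beta^2\in K$, hence $\alpha^2\in K$. Otherwise $P$ is a product of two monic quadratics over $K$ which, as $P$ has no $X^3$ term, are $X^2+pX+q$ and $X^2-pX+r$; comparing the $X$-coefficients forces $p=0$ (then $P=(X^2+q)(X^2+r)$, so $\alpha^2\in K$) or $q=r$, in which case the roots of $X^2+pX+q$ form a two-element subset of $\{\pm\alpha,\pm\beta\}$ with sum $-p\in K$, and a short run through the six such subsets puts one of $\alpha^2,\alpha+\beta,\alpha-\beta$ in $K$.

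For $(2)\Leftrightarrow(3)$ I would translate the three conditions. Since $\alpha^2,\beta^2$ are the roots of $Y^2+bY+d$ and $\operatorname{char}K\neq2$, one has $\alpha^2\in K$ iff $b^2-4d$ is a square in $K$. For the remaining two I split on whether $d$ is a square in $K$: if it is, then $\gamma\in K$, the set $\{-b+2\gamma,-b-2\gamma\}$ coincides with $\{-b+2\sqrt d,-b-2\sqrt d\}\subseteq K$, and $(\alpha\pm\beta)^2=-b\pm2\gamma$ shows $\alpha\pm\beta\in K$ iff $-b\pm2\gamma$ is a square in $K$; if $d$ is not a square, then $\gamma\notin K$, so $-b\pm2\gamma\notin K$ and are (vacuously) not squares in $K$, while $\alpha\pm\beta\notin K$ as well, since $(\alpha\pm\beta)^2=-b\pm2\gamma\in K$ would force $\gamma\in K$. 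Hence the three conditions of (2) and of (3) hold or fail together. I expect the only delicate points to be the bookkeeping in the two-quadratics case (keeping the vanishing $X^3$-coefficient aligned with the shape of the factors) and the degenerate branch where $d$ is not a square, where one must read ``$-b\pm2\sqrt d$ is not a square in $K$'' as the assertion that this expression is not even an element of $K$; the rest is direct computation valid over any field of characteristic $\neq2$.
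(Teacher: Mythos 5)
Your proof is correct. Note, however, that the paper does not prove this statement at all: Theorem \ref{thmQuad1} is quoted directly from Kappe--Warren \cite{KW89} (their Theorems 2 and 3), so there is no in-paper argument to compare against. Your write-up is essentially the standard elementary proof of that reference: the identities $\alpha^2+\beta^2=-b$, $\alpha^2\beta^2=d$, $(\alpha^2-\beta^2)^2=b^2-4d$ and $(\alpha\pm\beta)^2=-b\pm2\alpha\beta$ drive everything, the equivalence $(1)\Leftrightarrow(2)$ comes from classifying the possible quadratic factorizations (the vanishing $X^3$- and $X$-coefficients forcing either $p=0$ or equal constant terms, exactly as you say), and $(2)\Leftrightarrow(3)$ is the translation via $\gamma=\alpha\beta$ with the case split on whether $d$ is a square. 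Your handling of the degenerate branch where $d$ is not a square in $K$ --- reading ``$-b\pm2\sqrt d$ is not a square in $K$'' as vacuously true because the element does not even lie in $K$, and matching this with $\alpha\pm\beta\notin K$ --- is the right reading of the statement. The only point worth tightening is the ``six subsets'' step: if $P$ is inseparable the two roots of a quadratic factor form a multiset, but the extra multiset cases either give a root in $K$ (already excluded in that branch) or reduce to $\alpha^2\in K$, so nothing breaks.
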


\begin{corollary}\label{corQuad1}
Let $P(X)=X^4+bX^2+d$ be a polynomial over a field $K$ of characteristic $\ne2$, with roots $\pm\alpha,\pm\beta$. If $\alpha^2$ and $\alpha\beta$ are not in $K$ then $P$ is irreducible over $K$. 
\end{corollary}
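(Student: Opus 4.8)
The plan is to deduce this directly from the equivalence (1)$\Leftrightarrow$(2) in Theorem \ref{thmQuad1}. Since $\alpha^2\notin K$ is already one of the hypotheses, it suffices to show that $\alpha+\beta\notin K$ and $\alpha-\beta\notin K$, and then irreducibility follows from condition (2). So the whole argument reduces to ruling out these two possibilities using only the assumption $\alpha\beta\notin K$.

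First I would record the elementary symmetric relations coming from the substitution $Y=X^2$: the numbers $\alpha^2$ and $\beta^2$ are the roots of $Y^2+bY+d$, so $\alpha^2+\beta^2=-b\in K$ and $\alpha^2\beta^2=d\in K$. Next, suppose for contradiction that $\alpha+\beta=c\in K$. Squaring gives $c^2=\alpha^2+2\alpha\beta+\beta^2=-b+2\alpha\beta$, hence $2\alpha\beta=c^2+b\in K$; since $\charac K\ne2$, this forces $\alpha\beta\in K$, contradicting the hypothesis. The same computation with $\alpha-\beta=c'\in K$ gives $(c')^2=-b-2\alpha\beta$, again yielding $\alpha\beta\in K$, a contradiction. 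Therefore neither $\alpha+\beta$ nor $\alpha-\beta$ lies in $K$, and together with $\alpha^2\notin K$ this is exactly condition (2) of Theorem \ref{thmQuad1}, so $P$ is irreducible.

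There is essentially no obstacle here: the only point requiring a (trivial) bit of care is the use of $\charac K\ne 2$ to pass from $2\alpha\beta\in K$ to $\alpha\beta\in K$, and making sure the sign conventions ($\alpha^2+\beta^2=-b$, not $+b$) are applied consistently when expanding $(\alpha\pm\beta)^2$. One could alternatively phrase the contradiction in terms of condition (3) of Theorem \ref{thmQuad1} — showing $-b\pm 2\sqrt d$ are non-squares — but routing through condition (2) is cleaner since $\alpha\beta$ appears there naturally via $\alpha^2\beta^2=d$.
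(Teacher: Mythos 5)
Your argument is correct and is exactly the paper's proof: the authors simply record the identity $(\alpha\pm\beta)^2=\alpha^2+\beta^2\pm2\alpha\beta=-b\pm2\alpha\beta$ and leave the deduction implicit, whereas you spell out that membership of $\alpha\pm\beta$ in $K$ would force $\alpha\beta\in K$ (using $\charac K\ne2$), contradicting the hypothesis, so condition (2) of Theorem \ref{thmQuad1} applies. No gap; your version is just a fully written-out form of the same one-line computation.
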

\begin{proof}
We have $(\alpha\pm\beta)^2=\alpha^2+\beta^2\pm2\alpha\beta=-b\pm2\alpha\beta$. 
\end{proof}

\begin{theorem}\label{thmQuad2}
Let $P(X)=X^4+bX^2+d$ be an irreducible polynomial over a field $K$ of characteristic $\ne2$, and let $F$ be the splitting field of $P$ over $K$. The Galois group of $F$ over $K$ is:
\begin{enumerate}
\item the Klein group $V_4$ if and only if $d\in K^2$;
\item the cyclic group $C_4$ if and only if $d(b^2-4d)\in K^2$;
\item the dihedral group $D_4$ if and only if $d\notin K^2$ and $d(b^2-4d)\notin K^2$; 
\end{enumerate}
\end{theorem}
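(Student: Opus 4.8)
The plan is to realize $G:=\Gal(F/K)$, with $F$ the splitting field of $P$, as a transitive subgroup of one fixed dihedral group of order $8$, and then to detect which of the three possible such groups occurs by testing whether two explicit square roots already lie in $K$. First I would extract from irreducibility what is needed: $P(0)=d\neq0$, so no root vanishes and $\alpha\beta\neq0$; since $\alpha^2$ is a root of $Y^2+bY+d$, irreducibility forces $b^2-4d\notin K^2$ (otherwise $\alpha^2\in K$ and $[K(\alpha):K]\le2$), hence $b^2-4d\neq0$, the four roots $\pm\alpha,\pm\beta$ are pairwise distinct, and $[K(\alpha):K]=4$. From $\alpha^2,\beta^2$ being the roots of $Y^2+bY+d$ one gets $(\alpha\beta)^2=d$ and $(\alpha^2-\beta^2)^2=b^2-4d$, and $F=K(\alpha,\beta)$; also $P$ is separable (its derivative $4X^3+2bX$ is nonzero, the characteristic being $\neq2$), so $F/K$ is Galois.

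Next I would pin down the ambient group. Any $\sigma\in G$, viewed as a permutation of the four roots, commutes with the involution $c$ sending each root to its negative, because $\sigma(-\gamma)=-\sigma(\gamma)$; hence $G$ is contained in the centralizer $C$ of $c$ in $S_4$, and $C$ is dihedral of order $8$. Fix $r=(\alpha,\beta,-\alpha,-\beta)$ and $s=(\beta,-\beta)$, so that $r^2=c$ and $C=\langle r,s\rangle$. Since $P$ is irreducible, $G$ is transitive on the roots, so $4\mid|G|$ and $|G|\in\{4,8\}$; and the transitive subgroups of $C$ are exactly $C$ itself, the cyclic group $\langle r\rangle$, and the Klein group $V:=\{1,c,(\alpha,\beta)(-\alpha,-\beta),(\alpha,-\beta)(-\alpha,\beta)\}$. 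So $G$ is one of $V$, $\langle r\rangle$, $C$, i.e. $V_4$, $C_4$, or $D_4$, and it remains to separate these three.

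The heart of the argument is two stabilizer computations inside $C$, carried out by evaluating on the generators. One checks $\mathrm{Stab}_C(\alpha\beta)=V$ — e.g. $r(\alpha\beta)=\beta\cdot(-\alpha)=-\alpha\beta$, while $c$ and the two ``edge'' involutions fix $\alpha\beta$ — and $\mathrm{Stab}_C\big(\alpha\beta(\alpha^2-\beta^2)\big)=\langle r\rangle$, using that $r$ negates both $\alpha\beta$ and $\alpha^2-\beta^2$ (hence fixes the product) whereas $s$, $rs$, and their $c$-translates negate exactly one factor. Since $|G|\ge4=|V|=|\langle r\rangle|$, this gives $G=V\iff\alpha\beta\in K$ and $G=\langle r\rangle\iff\alpha\beta(\alpha^2-\beta^2)\in K$. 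Finally, $\alpha\beta\in K\iff d=(\alpha\beta)^2\in K^2$ (forward is clear; if $d=e^2$ then $\alpha\beta=\pm e\in K$), and likewise $\alpha\beta(\alpha^2-\beta^2)\in K\iff d(b^2-4d)\in K^2$; moreover $d(b^2-4d)\in K^2$ forces $d\notin K^2$, since otherwise $b^2-4d$ would be a square, which is excluded. Hence $G\cong V_4\iff d\in K^2$, $G\cong C_4\iff d(b^2-4d)\in K^2$, and, $G$ being one of the three, $G\cong D_4$ exactly when $d\notin K^2$ and $d(b^2-4d)\notin K^2$.

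I expect the only real obstacle to be bookkeeping: making the identification $C\cong D_4$ concrete and running the two stabilizer computations without sign errors; everything else is formal, and several of the non-squareness facts above are already contained in Theorem \ref{thmQuad1}. An alternative to the group-theoretic core is that the resolvent cubic of $X^4+bX^2+d$ factors as $(Y-b)(Y^2-4d)$, which disposes of the $V_4$ case immediately; but distinguishing $C_4$ from $D_4$ still requires a computation of the above type, so I would keep the direct argument.
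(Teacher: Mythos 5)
Your proof is correct. Note first that the paper does not prove this statement at all: it is quoted verbatim from Kappe--Warren \cite{KW89} (their Theorem 3), so there is no in-paper argument to compare against. Your argument is essentially the classical one and is in the same spirit as the cited source: embed $G$ into the centralizer $C\cong D_4$ of the sign involution $c$, observe that the only transitive subgroups of $C$ are $V$, $\langle r\rangle$ and $C$ itself, and then detect $V$ and $\langle r\rangle$ via the resolvent elements $\alpha\beta$ and $\alpha\beta(\alpha^2-\beta^2)$, whose squares are $d$ and $d(b^2-4d)$. I checked the two stabilizer computations (they are right: exactly $\{1,c,rs,r^3s\}=V$ fixes $\alpha\beta$, and exactly $\langle r\rangle$ fixes $\alpha\beta(\alpha^2-\beta^2)$), the preliminary facts ($d\neq 0$, $b^2-4d\notin K^2$ hence nonzero, separability), and the mutual exclusivity of cases 1 and 2, which you correctly derive from $b^2-4d\notin K^2$. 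The one point worth making explicit if you write this up is that ``$\mathrm{Stab}_C$'' refers to the formal action of permutations of the roots on products of roots, which agrees with the Galois action on the subgroup $G$; your use of it ($\alpha\beta\in K\iff G\subseteq\mathrm{Stab}_C(\alpha\beta)$, and then $G\subseteq V\iff G=V$ since $|G|\ge 4$) is sound.
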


\begin{notation}
\begin{enumerate}
\item $\nu\ge2$ and $x_0\ge0$ are integers. 
\item For $n\ge0$, $x_{n+1}=\pm\sqrt{\nu+x_n}$ (any choice of sign can be taken at every step). 
\item For $n\ge0$, $K_n=\Q(x_n)$, and $K=\cup_{n\ge0} K_n$. 
\item $\Kcal=(K_n)_{n\ge0}$ (and we will use the notation from Section \ref{secThinnessGeneral}).
\item If $\alpha$ is a totally real algebraic number, $\overline{\lvert\alpha\rvert}$ denotes the house of $\alpha$.
\item $\Omega$ is the set of pairs $(\nu,x_0)$ such that $[K_{n+1}:K_n]=2$ and $K_n$ is totally real for every $n\ge0$. 
\item $\Omega_{\rm inc}$ is the set of pairs $(\nu,x_0)$ in $\Omega$ such that the sequence $(\overline{\lvert x_n\rvert})_n$ is (strictly) increasing. 
\item $\Omega_{\rm dec}$ is the set of pairs $(\nu,x_0)$ in $\Omega$ such that the sequence $(\overline{\lvert x_n\rvert})_n$ is (strictly) decreasing. 
\item $(u_n)_{n\ge 0}=(u_{\nu,n})_{n\ge 0}$ is the sequence defined by $u_0=\nu^2-\nu$, and $u_{n+1}=u_n^2-\nu$ for every $n\ge0$. Note that $u_{2,n}=2$ for every $n$. We let the reader verify that the sequence $(u_n)_{n\ge0}$ is strictly increasing for $\nu\ge3$.  
\item $f_n=(u_{n-1}-x_0)(u_{n}-x_0)$, for $n\ge1$. 
\item In order to avoid confusion, we may use $\nu,x_0$ as a superscript: so for example we may write $x^{\nu,x_0}_n$ instead of $x_n$, etc.
\end{enumerate}
\end{notation}

Here are some facts that we will be using further on.
\begin{itemize} 
\item $\Omega=\Omega_{\rm inc}\cup\Omega_{\rm dec}$. See \cite[Thm. 1.4 and Lemma 2.2]{VV15} for this fact and the two next ones. 
\item The pairs $(\nu,x_0)$ in $\Omega$ such that $\nu>x_0^2-x_0$ are precisely the ones in $\Omega_{\rm inc}$
\item The pairs $(\nu,x_0)$ in $\Omega$ such that $\nu< x_0^2-x_0$ and $x_0<\nu^2-\nu$ are precisely the ones in $\Omega_{\rm dec}$.
\item If $\nu+x_0$ is congruent to $2$ or $3$ modulo $4$ (\cite[Prop. 2.15]{VV15}), or if $x_0=0$ and $\nu$ is not a square (\cite[Cor. 1.3]{St92} applied to $t^2-\nu$), then the tower increases at each step (but we do not know how to characterize the set of pairs for which the tower increases at each step). 
\end{itemize}

In particular, the only pairs $(2,x_0)$ in $\Omega$ are $(2,0)$ and $(2,1)$, and they lie in $\Omega_{\rm inc}$. 

Note that the condition that the tower increases at each step implies in particular that $\nu+x_0$, for $(\nu,x_0)\in\Omega$, is not a square.

\section{Quartic extensions within the tower}\label{secQuarticGen}

For $n\ge0$, the minimal polynomial of $x_{n+1}$ over $K_n$ is $X^2-(\nu+x_n)$. Recall that we have $u_{0}=\nu^2-\nu$ and
$$
u_{n}=u_{n-1}^2-\nu.
$$
For $n\ge\ell\ge0$, let $N_{n,\ell}$ denote the norm map from $K_n$ to $K_\ell$. 

\begin{lemma}\label{corNotSqGen2New00}
Let $(\nu,x_0)\in\Omega$. Let $n\ge\ell\ge0$ and $j\ge0$. We have 
$$
N_{n,\ell}(u_j-x_{n})=u_{j+n-\ell}-x_\ell.
$$
In particular, if $u_{j+n-\ell}-x_\ell$ is not a square in $K_{\ell}$, then $u_j-x_{n}$ is not a square in $K_{n}$. 
\end{lemma}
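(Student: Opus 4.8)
The identity $N_{n,\ell}(u_j - x_n) = u_{j+n-\ell} - x_\ell$ should follow by a straightforward induction on $n - \ell$, reducing everything to the single-step case. The plan is first to establish the base case $n - \ell = 1$: for any field $K_m$ in the tower, the nontrivial $K_{m-1}$-automorphism of $K_m$ sends $x_m$ to $-x_m$, so the norm of $u_j - x_m$ down to $K_{m-1}$ is $(u_j - x_m)(u_j + x_m) = u_j^2 - x_m^2 = u_j^2 - (\nu + x_{m-1})$. Now invoke the defining recursion $u_{j+1} = u_j^2 - \nu$ to rewrite this as $u_{j+1} - x_{m-1}$. This is exactly the claimed formula with $n = m$, $\ell = m-1$, since $j + n - \ell = j+1$.

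Next I would run the induction. Assume the identity holds for the step from $K_n$ to $K_{n-1}$ (i.e. $N_{n,n-1}(u_j - x_n) = u_{j+1} - x_{n-1}$) and, as an inductive hypothesis on the \emph{length}, that $N_{n-1,\ell}(u_i - x_{n-1}) = u_{i + (n-1) - \ell} - x_\ell$ for all $i \ge 0$. Using transitivity of the norm, $N_{n,\ell} = N_{n-1,\ell} \circ N_{n,n-1}$, so
$$
N_{n,\ell}(u_j - x_n) = N_{n-1,\ell}\bigl(N_{n,n-1}(u_j - x_n)\bigr) = N_{n-1,\ell}(u_{j+1} - x_{n-1}) = u_{(j+1) + (n-1) - \ell} - x_\ell = u_{j + n - \ell} - x_\ell,
$$
which closes the induction. (One should note the mild point that $N_{n,\ell}$ restricted to $K_{n-1}$ agrees with $N_{n-1,\ell}$, which is immediate from transitivity of norms in the tower $K_\ell \subseteq K_{n-1} \subseteq K_n$, each step being quadratic by $(\nu,x_0) \in \Omega$.)

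For the final sentence: if $u_j - x_n = y^2$ were a square in $K_n$, then applying $N_{n,\ell}$ gives $u_{j+n-\ell} - x_\ell = N_{n,\ell}(y)^2$, a square in $K_\ell$ — contrapositive of the assumption. I do not anticipate a genuine obstacle here; the only thing to be careful about is bookkeeping the index shift $j \mapsto j + n - \ell$ correctly through the induction and making sure the edge case $n = \ell$ (where the norm is the identity and the formula reads $u_j - x_\ell = u_j - x_\ell$) is covered, which it is trivially.
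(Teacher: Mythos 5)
Your proposal is correct and follows essentially the same route as the paper: the single-step computation $N_{m,m-1}(u_j-x_m)=u_j^2-x_m^2=u_j^2-(\nu+x_{m-1})=u_{j+1}-x_{m-1}$ is exactly the paper's calculation, and the paper likewise concludes by iterating this step $n-\ell$ times via transitivity of the norm. The treatment of the final implication (a norm of a square is a square) is also the standard argument the paper intends.
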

\begin{proof}
We have
$$
\begin{aligned}
N_{\ell+1,\ell}(u_{j+n-(\ell+1)}-x_{\ell+1})
&=u_{j+n-(\ell+1)}^2-x_{\ell+1}^2\\
&=u_{j+n-(\ell+1)}^2-(\nu+x_\ell)\\
&=\left(u_{j+n-\ell}+\nu\right)- \nu- x_{\ell}\\
&=u_{j+n-\ell}- x_{\ell},
\end{aligned}
$$
so we have the desired formula for the norm from $K_n$ to $K_\ell$ by repeating $n-\ell$ times this process. 
\end{proof}

The minimal polynomial of $x_{n+2}$ over $K_n$ is 
$$
X^4-2\nu X^2+\nu^2-\nu-x_n=
X^4-2\nu X^2+u_0-x_n. 
$$

\begin{corollary}\label{corNotSqNew00}
Let $(\nu,x_0)\in\Omega$. Let $n\ge\ell\ge0$. If $u_{n-\ell}-x_\ell$ is not a square in $K_\ell$, then the Galois group of the splitting field of $K_{n+2}/K_{n}$ is not the Klein group. 
\end{corollary}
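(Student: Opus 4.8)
The plan is to apply Theorem \ref{thmQuad2} to the quartic polynomial $P(X) = X^4 - 2\nu X^2 + u_0 - x_n$, which the text has just identified as the minimal polynomial of $x_{n+2}$ over $K_n$. In the notation of Theorem \ref{thmQuad2} we have $b = -2\nu$ and $d = u_0 - x_n$. First I would note that $P$ is irreducible over $K_n$ (since $[K_{n+2}:K_n] = 4$ by the definition of $\Omega$, recalling $(\nu,x_0)\in\Omega$), so Theorem \ref{thmQuad2} applies: the Galois group of the splitting field of $P$ over $K_n$ — equivalently of $K_{n+2}/K_n$ — is the Klein group $V_4$ if and only if $d = u_0 - x_n \in K_n^2$. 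Hence it suffices to show that $u_0 - x_n$ is \emph{not} a square in $K_n$.

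The key step is to invoke Lemma \ref{corNotSqGen2New00} with $j = 0$ in a form that connects $u_0 - x_n$ with $u_{n-\ell} - x_\ell$. Applying the lemma with index $j = \ell$ (so that the subscript $j + n - \ell$ becomes $n$), we get $N_{n,\ell}(u_\ell - x_n) = u_n - x_\ell$; but that is not quite the statement we want. Instead, the right application is with the roles arranged so that the element whose norm we compute is $u_0 - x_n$: taking $j = 0$ in Lemma \ref{corNotSqGen2New00} gives $N_{n,\ell}(u_0 - x_n) = u_{n-\ell} - x_\ell$. Then the final sentence of Lemma \ref{corNotSqGen2New00} says precisely that if $u_{n-\ell} - x_\ell$ is not a square in $K_\ell$, then $u_0 - x_n$ is not a square in $K_n$ — because the norm of a square is a square. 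This is exactly our hypothesis, so $d = u_0 - x_n \notin K_n^2$, and by Theorem \ref{thmQuad2}(1) the Galois group is not the Klein group.

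I do not anticipate a serious obstacle here: the corollary is essentially a bookkeeping combination of the explicit minimal polynomial of $x_{n+2}$ over $K_n$, the norm identity of Lemma \ref{corNotSqGen2New00} with $j=0$, and the classification of quartic Galois groups in Theorem \ref{thmQuad2}. The only point requiring a word of care is the irreducibility of $P$ over $K_n$, which is needed to apply Theorem \ref{thmQuad2}; this follows immediately from $(\nu,x_0)\in\Omega$, which forces $[K_{n+2}:K_n]=4$.
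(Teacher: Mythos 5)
Your proposal is correct and follows exactly the paper's own argument: apply Lemma \ref{corNotSqGen2New00} with $j=0$ to conclude that $u_0-x_n$ is not a square in $K_n$, then invoke Theorem \ref{thmQuad2} with $b=-2\nu$ and $d=u_0-x_n$ for the minimal polynomial of $x_{n+2}$ over $K_n$. Your explicit remark on the irreducibility of that quartic (forced by $(\nu,x_0)\in\Omega$) is a detail the paper leaves implicit, but otherwise the two proofs coincide.
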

\begin{proof}
By Lemma \ref{corNotSqGen2New00}, $u_0-x_{n}$ is not a square in $K_{n}$. We now apply Theorem \ref{thmQuad2} to the minimal polynomial of $x_{n+2}$ over $K_{n}$: in our case, $b=-2\nu$ and $d=u_0-x_{n}$, so $d$ is not a square in $K_{n}$, hence the Galois group of the splitting field of $K_{n+2}/K_{n}$ is not the Klein group. 
\end{proof}

Thus Theorem \ref{generictower} gives:

\begin{corollary}\label{corNotSq01}
Let $(\nu,x_0)\in\Omega$. If for all $n\ge0$, $u_{n}-x_0$ is not a square in $\Q$, then the tower $(K_n)_{n\ge0}$ is thin. 
\end{corollary}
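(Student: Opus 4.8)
The plan is to combine Corollary \ref{corNotSqNew00} with Theorem \ref{generictower}, using the specific shape of the tower $(K_n)$. By Theorem \ref{generictower}, the tower $(K_n)_{n\ge0}$ fails to be thin if and only if there is some $n\ge2$ such that $K_n/K_{n-2}$ is Galois with Galois group the Klein four-group. So it suffices to show that, under the hypothesis that $u_m-x_0$ is never a square in $\Q$, no such $n$ exists; equivalently, that for every $n\ge0$, the splitting field of $K_{n+2}/K_n$ does not have the Klein group as Galois group.

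The key step is to apply Corollary \ref{corNotSqNew00} with $\ell=0$: this says that if $u_n-x_0$ is not a square in $K_0=\Q$, then the Galois group of the splitting field of $K_{n+2}/K_n$ is not the Klein group. The hypothesis of the statement is precisely that $u_n-x_0$ is not a square in $\Q$ for \emph{all} $n\ge0$, so Corollary \ref{corNotSqNew00} applies for every $n\ge0$ (with $\ell=0$). Hence for no $n\ge0$ does $K_{n+2}/K_n$ have Klein four-group Galois group, and in particular there is no $n\ge2$ with $K_n/K_{n-2}$ Galois with Klein Galois group. By Theorem \ref{generictower}, the tower $(K_n)_{n\ge0}$ is thin, which is the assertion.

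I do not anticipate a genuine obstacle here: the corollary is essentially a direct concatenation of two previously established results, with the role of the $(u_n)$ sequence being exactly to track when the relevant quartic $X^4-2\nu X^2+(u_0-x_n)$ over $K_n$ — whose constant term is $u_0-x_n$, which by Lemma \ref{corNotSqGen2New00} is not a square in $K_n$ as soon as $u_n-x_0$ is not a square in $\Q$ — can have Klein group. The only point to be careful about is that one needs the quartic to be irreducible over $K_n$ for Theorem \ref{thmQuad2} to apply in Corollary \ref{corNotSqNew00}, but this is already guaranteed by the assumption $(\nu,x_0)\in\Omega$, which forces $[K_{n+2}:K_n]=4$. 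So the proof is just: invoke Corollary \ref{corNotSqNew00} for all $n$, then invoke Theorem \ref{generictower}.
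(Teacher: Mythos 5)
Your proof is correct and matches the paper's intended argument exactly: the paper derives this corollary directly from Corollary \ref{corNotSqNew00} (with $\ell=0$) together with Theorem \ref{generictower}, which is precisely your concatenation. Your side remarks about the splitting field versus $K_{n+2}$ itself and about irreducibility being guaranteed by $(\nu,x_0)\in\Omega$ are the right points to check and are handled correctly.
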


\begin{lemma}\label{lemnux0b}
Let $(\nu,x_0)\in\Omega$. Unless $(\nu,x_0)=(2,1)$, we have $2\nu^2-3\nu>x_0+1$. 
\end{lemma}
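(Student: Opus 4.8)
The plan is to exploit the classification of $\Omega$ recalled in Section \ref{NotFacts}: every pair $(\nu,x_0)\in\Omega$ lies in $\Omega_{\rm inc}\cup\Omega_{\rm dec}$, and these two sets are controlled by inequalities relating $\nu$ and $x_0^2-x_0$. So the argument splits into the case $\nu\ge3$ (where the inequality is very generous and essentially automatic) and the more delicate case $\nu=2$ (where $(\nu,x_0)=(2,1)$ is explicitly excluded and the only remaining pair in $\Omega$ is $(2,0)$).

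First I would dispose of the case $\nu=2$. From the facts recalled after the Notation, the only pairs $(2,x_0)$ in $\Omega$ are $(2,0)$ and $(2,1)$. Since $(2,1)$ is excluded by hypothesis, we are left with $(\nu,x_0)=(2,0)$, for which $2\nu^2-3\nu = 8-6 = 2 > 1 = x_0+1$, so the inequality holds. Next, for $\nu\ge3$ I would bound $x_0$ from above in terms of $\nu$ using membership in $\Omega$: if $(\nu,x_0)\in\Omega_{\rm inc}$ then $\nu > x_0^2-x_0$, which forces $x_0\le\nu$ (indeed $x_0^2-x_0<\nu$ gives $x_0<\frac{1+\sqrt{1+4\nu}}{2}\le\nu$ for $\nu\ge2$); if $(\nu,x_0)\in\Omega_{\rm dec}$ then $x_0<\nu^2-\nu = u_0$. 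In either subcase we get $x_0+1\le\nu^2-\nu+1$ at worst (the $\Omega_{\rm dec}$ bound being the weaker of the two). It then suffices to check $2\nu^2-3\nu > \nu^2-\nu+1$, i.e. $\nu^2-2\nu-1>0$, which holds for all $\nu\ge3$ since $\nu^2-2\nu-1 = (\nu-1)^2-2 \ge 4-2 = 2 > 0$.

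I do not expect any real obstacle here; the lemma is essentially a bookkeeping exercise built on the trichotomy for $\Omega$. The only point requiring a small amount of care is making sure the bound $x_0<\nu^2-\nu$ from the $\Omega_{\rm dec}$ characterization is actually available for \emph{all} pairs of that type (it is, being part of the quoted characterization from \cite{VV15}), and handling the borderline case $\nu=2$ separately since there the generic inequality $\nu^2-2\nu-1>0$ fails — which is precisely why $(2,1)$ must be excluded, and why checking $(2,0)$ by hand is unavoidable.
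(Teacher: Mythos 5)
Your proof is correct and follows essentially the same route as the paper: both arguments reduce to the dichotomy $\Omega=\Omega_{\rm inc}\cup\Omega_{\rm dec}$ and the bounds $\nu>x_0^2-x_0$ (increasing case) and $x_0<\nu^2-\nu$ (decreasing case) quoted from \cite{VV15}, after checking $\nu=2$ by hand. Your version is slightly more streamlined, since extracting $x_0\le\nu$ from the quadratic inequality and reducing everything to $\nu^2-2\nu-1>0$ avoids the paper's sub-case splits on the size of $x_0$.
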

\begin{proof}
This is clearly true for $(\nu,x_0)=(2,0)$ (and clearly not true for $(\nu,x_0)=(2,1)$). Assume $\nu\ge3$. The statement is then trivial when $x_0\le7$. Assume $x_0\ge8$. If $(\nu,x_0)\in\Omega_{\rm inc}$, then $\nu>x_0^2-x_0$, hence $2\nu-3>2x_0^2-2x_0-3$, hence 
$$
2\nu^2-3\nu=\nu(2\nu-3)>(x_0^2-x_0)(2x_0^2-2x_0-3)
=2x_0^4-4x_0^3-x_0^2+3x_0>x_0+1.
$$

Assume $(\nu,x_0)\in\Omega_{\rm dec}$, so that $\nu<x_0^2-x_0$ and $x_0\le\nu^2-\nu$. Assume $x_0>\nu+1$. We have $2\nu^2-2\nu\ge 2x_0$, hence $2\nu^2-3\nu\ge 2x_0-\nu>x_0+\nu+1-\nu=x_0+1$. If $x_0\le\nu+1$, then $x_0+1\le\nu+2<2\nu^2-3\nu$, because $\nu\ge3$.
\end{proof}

\begin{lemma}\label{lemsequn}
Let $(\nu,x_0)\in\Omega$. Unless $(\nu,x_0)=(2,1)$, for every $n\ge 1$, $u_n-x_0$ is not a square in $\Q$. 
\end{lemma}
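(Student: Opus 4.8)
The plan is to show that $u_n - x_0$ is never a perfect square (for $n \ge 1$) by squeezing it strictly between two consecutive squares. Since $u_n = u_{n-1}^2 - \nu$, we have $u_n - x_0 = u_{n-1}^2 - (\nu + x_0)$, so it is natural to compare this with $(u_{n-1} - 1)^2 = u_{n-1}^2 - 2u_{n-1} + 1$. Thus I would first establish the two inequalities
$$
(u_{n-1} - 1)^2 < u_n - x_0 < u_{n-1}^2,
$$
which together force $u_n - x_0$ to lie strictly between the consecutive squares $(u_{n-1}-1)^2$ and $u_{n-1}^2$, hence not be a square itself. The upper bound is immediate since $\nu + x_0 \ge \nu \ge 2 > 0$. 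The lower bound is equivalent to $2u_{n-1} - 1 > \nu + x_0$, i.e. to $2u_{n-1} > \nu + x_0 + 1$.

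So the whole lemma reduces to verifying $2u_{n-1} > \nu + x_0 + 1$ for all $n \ge 1$, i.e. $2u_{m} > \nu + x_0 + 1$ for all $m \ge 0$. Since $(u_m)_{m \ge 0}$ is non-decreasing (it is strictly increasing for $\nu \ge 3$, and constant equal to $2$ for $\nu = 2$), it suffices to check the base case $m = 0$: $2u_0 = 2(\nu^2 - \nu) = 2\nu^2 - 2\nu > \nu + x_0 + 1$, which rearranges to $2\nu^2 - 3\nu > x_0 + 1$. This is precisely the content of Lemma \ref{lemnux0b}, valid for all $(\nu, x_0) \in \Omega$ except $(2,1)$. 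For $\nu = 2$ (so $(\nu, x_0) = (2,0)$), one checks directly that $2u_m = 4 > 2 + 0 + 1 = 3$, consistent with this.

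I expect there to be essentially no obstacle here: the argument is a clean "between consecutive squares" squeeze, and the only quantitative input — the inequality $2\nu^2 - 3\nu > x_0 + 1$ — has already been isolated and proven as Lemma \ref{lemnux0b}, with the case analysis over $\Omega_{\mathrm{inc}}$ and $\Omega_{\mathrm{dec}}$ handled there. The one point requiring a word of care is the reduction from "for all $n \ge 1$" to "the base case $m = 0$", which uses monotonicity of $(u_m)$; I would state this explicitly, noting that for $\nu = 2$ the sequence is constant (so the single check $2 \cdot 2 > 3$ suffices) and for $\nu \ge 3$ it is strictly increasing (so $2u_m \ge 2u_0 > \nu + x_0 + 1$ for all $m \ge 0$). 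Combining, $u_n - x_0$ lies strictly between $(u_{n-1}-1)^2$ and $u_{n-1}^2$ for every $n \ge 1$, hence is not a square in $\Q$.
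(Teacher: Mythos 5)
Your proof is correct and is essentially the paper's argument recast in direct form: the paper supposes $u_n-x_0=z^2$, factors $\nu+x_0=(u_{n-1}-z)(u_{n-1}+z)$ and deduces $\nu+x_0\ge 2u_{n-1}-1$, which is precisely the negation of your key inequality, then contradicts Lemma \ref{lemnux0b}, whereas your squeeze of $u_n-x_0$ strictly between $(u_{n-1}-1)^2$ and $u_{n-1}^2$ packages the same estimate without the detour through a contradiction (and without needing the remark that $z\neq0$). Both versions rest on exactly the same two inputs --- monotonicity of $(u_m)$ and the inequality $2\nu^2-3\nu>x_0+1$ of Lemma \ref{lemnux0b} --- so there is nothing to fix.
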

\begin{proof}
This is clearly true for $(\nu,x_0)=(2,0)$ and not true for $(\nu,x_0)=(2,1)$, because $(u_n)$ is the constant sequence $(2)$. Assume $\nu\ge3$. If $u_{n+1}-x_0=z^2$ is a rational square for some $n\ge0$, by definition of $(u_n)$ we have then
$$
\nu+x_0=u_n^2-(u_{n+1}-x_0)=u_n^2-z^2=(u_n-z)(u_n+z).
$$ 
Choosing $z>0$ (recall that we assume that $\nu+x_0$ is not a rational square, so $z\ne0$), since $u_n+z>0$ and $\nu+x_0>0$, also $u_n-z>0$, so $(u_n-z)(u_n+z)\ge (u_n-z)+(u_n+z)-1$, so we have:
$$
\nu+x_0\ge(u_n-z)+(u_n+z)-1= 2u_n-1,
$$
hence $\nu+x_0\ge2u_0-1=2(\nu^2-\nu)-1$ (because $u_n$ is an increasing sequence). So we have 
$$
2\nu^2-3\nu\le x_0+1,
$$
which contradicts Lemma \ref{lemnux0b}.
\end{proof}

\begin{lemma}\label{lemsequnnu2x01}
For any $n\ge1$, $u_{2,n-1}-x^{2,1}_1=2-x^{2,1}_1$ is not a square in $K^{2,1}_1$. 
\end{lemma}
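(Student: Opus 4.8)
The statement to prove is Lemma \ref{lemsequnnu2x01}: for $(\nu,x_0)=(2,1)$ we have $u_{2,n-1}=2$ for all $n$, so the claim is that $2-x_1$ is not a square in $K^{2,1}_1=\Q(x_1)$, where $x_1=\pm\sqrt{\nu+x_0}=\pm\sqrt{3}$. Thus $K^{2,1}_1=\Q(\sqrt3)$, and the assertion becomes: $2-x_1$, i.e. $2-\sqrt3$ (or $2+\sqrt3$, depending on the sign convention, but the two cases are symmetric under the nontrivial automorphism of $\Q(\sqrt3)$, so it suffices to treat one), is not a square in $\Q(\sqrt3)$.

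The plan is a direct computation in the real quadratic field $\Q(\sqrt3)$. Suppose for contradiction that $2-\sqrt3=(a+b\sqrt3)^2$ with $a,b\in\Q$. Expanding gives $a^2+3b^2=2$ and $2ab=-1$. From these one eliminates, say, $b=-1/(2a)$ to get $a^2+3/(4a^2)=2$, i.e. $4a^4-8a^2+3=0$, whose solutions are $a^2=1/2$ or $a^2=3/2$; neither is the square of a rational, contradiction. (Alternatively, and more slickly, one can argue via norms: $N_{\Q(\sqrt3)/\Q}(2-\sqrt3)=4-3=1$, which is consistent, so instead take the embedding $\sqrt3\mapsto\sqrt3\approx1.732$; then $2-\sqrt3\approx0.268>0$ but its conjugate $2+\sqrt3\approx3.732>0$ is also positive, so that test is inconclusive too — hence the elimination computation above is the cleanest route. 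One could also invoke Theorem \ref{thmQuad1}: $2-\sqrt3$ is a square in $\Q(\sqrt3)$ iff the quartic $X^4-2\cdot2\,X^2+\ldots$ hmm — in fact the cleanest is simply to note that $\sqrt{2-\sqrt3}=\frac{\sqrt6-\sqrt2}{2}$ generates $\Q(\sqrt2,\sqrt3)$, a degree-$4$ extension of $\Q$, hence cannot lie in the degree-$2$ field $\Q(\sqrt3)$.)

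I would present the last of these as the main argument: from the well-known identity $2-\sqrt3=\tfrac14(\sqrt6-\sqrt2)^2$ (equivalently $4(2-\sqrt3)=(\sqrt6-\sqrt2)^2=6+2-2\sqrt{12}=8-4\sqrt3$), any square root of $2-\sqrt3$ in an algebraic closure has the form $\pm\tfrac12(\sqrt6-\sqrt2)$, and $\Q\bigl(\tfrac12(\sqrt6-\sqrt2)\bigr)=\Q(\sqrt2,\sqrt3)$ has degree $4$ over $\Q$; since $[\Q(\sqrt3):\Q]=2$, we conclude $2-\sqrt3\notin\Q(\sqrt3)^2$. The same computation with $\sqrt3\mapsto-\sqrt3$ handles the other choice of sign for $x_1$. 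There is no real obstacle here — the lemma is elementary and self-contained; the only thing to be careful about is making the sign convention for $x_1$ explicit and noting that the two sign choices give Galois-conjugate (hence simultaneously (non-)square) elements, so a single case suffices.
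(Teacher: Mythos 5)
Your proof is correct, but your main argument takes a different route from the paper's. The paper first observes that any $w\in\Q(\sqrt3)$ with $w^2=2-\sqrt3$ must be an algebraic integer, hence $w=a+b\sqrt3$ with $a,b\in\Z$ (the ring of integers of $\Q(\sqrt3)$ being $\Z[\sqrt3]$), and then kills the equation $a^2+3b^2=2$ instantly by positivity over the integers. You instead either eliminate over $\Q$ (getting $4a^4-8a^2+3=0$, whose roots $a^2=1/2,\,3/2$ are not rational squares) or, in your preferred version, exhibit the square root explicitly via $4(2-\sqrt3)=(\sqrt6-\sqrt2)^2$ and note that $\Q\bigl(\tfrac12(\sqrt6-\sqrt2)\bigr)=\Q(\sqrt2,\sqrt3)$ has degree $4$ over $\Q$, so cannot sit inside the quadratic field $\Q(\sqrt3)$. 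Both routes are valid and elementary; the paper's integrality reduction is the shortest write-up, while your explicit-square-root argument is more informative in that it locates exactly where $\sqrt{2-\sqrt3}$ lives (in the genuinely larger field $K^{2,0}_1\cdot K^{2,1}_1$), which resonates with the role this lemma plays in distinguishing the towers $K^{2,0}$ and $K^{2,1}$. Your handling of the sign ambiguity in $x_1$ via the Galois conjugation $\sqrt3\mapsto-\sqrt3$ is a careful touch the paper elides; the exploratory remarks about norms being inconclusive could simply be cut from a final write-up.
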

\begin{proof}
Note that $x_1=\sqrt{\nu+x_0}=\sqrt{3}$. Assume $2-x_1=2-\sqrt{3}=w^2$ for some $w\in \Q(\sqrt3)$. Since $2-\sqrt3$ is an integer, also $w$ is an integer, so we can write $w=a+b\sqrt3$ with $a,b\in\Z$. So we have $2=a^2+3b^2$, which is impossible. 
\end{proof}

\begin{theorem}\label{theoremthin}
Let $(\nu,x_0)\in\Omega$. 
\begin{enumerate}
\item The tower $(K_n^{\nu,x_0})_{n\ge1}$ is thin. 
\item The tower $(K_n^{\nu,x_0})_{n\ge0}$ is thin if and only if $u_0-x_0$ is not a square. 
\end{enumerate}
\end{theorem}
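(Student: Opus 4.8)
My plan is to deduce both items from the machinery of Section \ref{secThinnessGeneral} (Theorem \ref{generictower}) combined with the norm computation of Lemma \ref{corNotSqGen2New00} (in the form of Corollary \ref{corNotSqNew00}) and the non-square facts of Lemmas \ref{lemsequn} and \ref{lemsequnnu2x01}. First I would handle the case $(\nu,x_0)=(2,1)$ separately for item 1, and then treat all remaining pairs uniformly; item 2 will be almost immediate once item 1 is in hand.

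For item 1, the point is to show that the tower $(K_n)_{n\ge1}$ is thin, i.e. that it contains no intermediate field other than the $K_n$ with $n\ge1$. By Theorem \ref{generictower} applied to the tower $(K_n)_{n\ge1}$ (whose base field $K_1$ is a number field), it suffices to show that for no $n\ge3$ is $K_{n}/K_{n-2}$ Galois with Klein group. By Corollary \ref{corNotSqNew00} (applied with $\ell=1$, and $n$ there being our $n-2\ge1$), this Galois group fails to be the Klein group as soon as $u_{(n-2)-1}-x_1 = u_{n-3}-x_1$ is not a square in $K_1$; note $n-3\ge0$. So I must check: for every $m\ge0$, $u_m-x_1$ is not a square in $K_1$. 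When $(\nu,x_0)=(2,1)$ we have $u_m=2$ for all $m$ and $x_1=\sqrt3$, and $2-\sqrt3$ is not a square in $K_1=\Q(\sqrt3)$ by Lemma \ref{lemsequnnu2x01}; this disposes of $(2,1)$. For $(\nu,x_0)\ne(2,1)$, I use Lemma \ref{corNotSqGen2New00} once more, this time as a descent from $K_1$ to $\Q$: $N_{1,0}(u_m-x_1)=u_{m+1}-x_0$, and by Lemma \ref{lemsequn} this is not a rational square for any $m\ge0$ (here $m+1\ge1$), so $u_m-x_1$ is not a square in $K_1$. Hence in all cases the hypothesis of Corollary \ref{corNotSqNew00} is met for every relevant $n$, and Theorem \ref{generictower} gives thinness of $(K_n)_{n\ge1}$.

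For item 2, thinness of $(K_n)_{n\ge0}$ is equivalent, by Theorem \ref{generictower} applied to $(K_n)_{n\ge0}$, to the absence of $n\ge2$ with $K_n/K_{n-2}$ Galois Klein. By item 1 and Corollary \ref{corNotSqNew00} (with $\ell\ge1$), the only possibly problematic index is $n=2$, i.e. the extension $K_2/K_0=K_2/\Q$; here Corollary \ref{corNotSqNew00} with $n=\ell=0$ reduces the question to whether $u_0-x_0$ is a square in $\Q$. If $u_0-x_0$ is not a square, then $K_2/\Q$ does not have Klein group, so no bad $n$ exists and the tower is thin. Conversely, if $u_0-x_0=d$ is a square in $\Q$, then the minimal polynomial $X^4-2\nu X^2+(u_0-x_0)$ of $x_2$ over $\Q$ (recalled just before Corollary \ref{corNotSqNew00}) has $d\in\Q^2$, so by Theorem \ref{thmQuad2}(1) its splitting field has Klein Galois group; since $[K_2:\Q]=4$ and $K_2$ contains $x_2$, $K_2$ is this splitting field, so $K_2/\Q$ is Galois with Klein group, whence the tower $(K_n)_{n\ge0}$ is not thin by the right-to-left direction of Theorem \ref{generictower}.

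I expect the only genuinely delicate point to be the bookkeeping with indices in item 1 — making sure that the descent $u_m - x_1 \rightsquigarrow u_{m+1}-x_0$ lands in the range $m+1\ge1$ where Lemma \ref{lemsequn} applies, and that Corollary \ref{corNotSqNew00} is invoked with the correct shift so that no extension $K_n/K_{n-2}$ with $n\ge3$ is missed. The special pair $(2,1)$ must be excluded from the $\Q$-descent argument (since there $u_{m+1}-x_0 = 2-1 = 1$ \emph{is} a square), which is exactly why Lemma \ref{lemsequnnu2x01} is needed as a stand-alone input; everything else is a direct citation of the results already established above.
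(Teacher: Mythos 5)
Your proposal is correct and follows essentially the same route as the paper: Theorem \ref{generictower} combined with Corollary \ref{corNotSqNew00} and Lemmas \ref{lemsequn} and \ref{lemsequnnu2x01} for item 1 and the forward direction of item 2, and Theorem \ref{thmQuad2}(1) applied to $X^4-2\nu X^2+(u_0-x_0)$ for the converse. The only cosmetic differences are that you route the generic case through $\ell=1$ plus an explicit norm descent to $\Q$ (the paper invokes Corollary \ref{corNotSqNew00} directly with $\ell=0$ there), and that you identify $K_2$ with the splitting field by a degree count rather than exhibiting $\sqrt{\nu-x_1}\in K_2$ explicitly as the paper does.
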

\begin{proof}
By Lemma \ref{lemsequn}, the hypothesis of Corollary \ref{corNotSqNew00} (with $\ell=0$) is satisfied for $n\ge1$ whenever $(\nu,x_0)\ne(2,1)$. For $(\nu,x_0)=(2,1)$, by Lemma \ref{lemsequnnu2x01}, the hypothesis of Corollary \ref{corNotSqNew00} (with $\ell=1$) is satisfied for $n\ge1$. So in all cases, for $n\ge1$, the Galois group of the splitting field of $K_{n+2}/K_n$ is not the Klein group, hence the tower $(K_n^{\nu,x_0})_{n\ge1}$ is thin by Theorem \ref{generictower}. 

If $u_0-x_0$ is not a square in $\Q$ (so $(\nu,x_0)\ne(2,1)$), then for every $n\ge0$ the Galois group of the splitting field of $K_{n+2}/K_n$ is not the Klein group, hence the tower $(K_n^{\nu,x_0})_{n\ge0}$ is thin, again by Theorem \ref{generictower}. 

If $u_0-x_0=a^2$ is a square, then 
$$
x_2\sqrt{\nu-x_1}=\sqrt{\nu+x_1}\sqrt{\nu-x_1}=\sqrt{\nu^2-x_1^2}=|a|\in\Q,
$$ 
so $K_2/K_0$ is Galois. The minimal polynomial of $x_2$ is $X^4-2\nu X^2+a^2$. This polynomial is indeed irreducible because we have assumed that the tower is increasing at each step, hence the Galois group is the Klein group by Theorem \ref{thmQuad2}. We deduce that there are three distinct intermediate fields strictly between $\Q$ and $\Q(x_ 2)$, hence the tower $(K_n^{\nu,x_0})_{n\ge0}$ is not thin. 
\end{proof}

By Remark \ref{remgeneralthin} and the item 2 of Theorem \ref{theoremthin}, we deduce item 1 of Theorem \ref{main}. 

When $u_0-x_0$ is a square, we need to understand better the quartic extensions within the tower. 

\begin{lemma}\label{corNotSqGen1}
For every $n\ge 1$, we have $N_{n,n-1}(\nu+x_{n})=u_0-x_{n-1}$.
\end{lemma}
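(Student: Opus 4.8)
The plan is to compute the norm directly from the definition of the tower. Since $(\nu,x_0)\in\Omega$, the extension $K_n/K_{n-1}$ is quadratic, and by construction $x_n$ is a root of $X^2-(\nu+x_{n-1})$ over $K_{n-1}$; hence the two conjugates of $x_n$ over $K_{n-1}$ are $x_n$ and $-x_n$, and the nontrivial element of $\Gal(K_n/K_{n-1})$ sends $x_n$ to $-x_n$. Therefore it sends $\nu+x_n$ to $\nu-x_n$, and I would write
$$
N_{n,n-1}(\nu+x_{n})=(\nu+x_{n})(\nu-x_{n})=\nu^2-x_n^2.
$$
Using the defining relation $x_n^2=\nu+x_{n-1}$ and the definition $u_0=\nu^2-\nu$, this becomes $\nu^2-(\nu+x_{n-1})=\nu^2-\nu-x_{n-1}=u_0-x_{n-1}$, which is the claim.

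There is really no obstacle here: the only point to be careful about is that $K_n\ne K_{n-1}$ (so that the norm is the product of the two genuinely distinct conjugates rather than a square), but this is guaranteed by the hypothesis $(\nu,x_0)\in\Omega$, which forces $[K_{n+1}:K_n]=2$ for all $n\ge0$. One could alternatively note that this is the special case $\ell=n-1$, $j=1$, $n\rightsquigarrow n$ of Lemma \ref{corNotSqGen2New00} shifted appropriately, but the direct one-line computation is cleaner and self-contained.
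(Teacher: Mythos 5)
Your computation is correct and is essentially identical to the paper's one-line proof: $N_{n,n-1}(\nu+x_n)=\nu^2-x_n^2=\nu^2-(\nu+x_{n-1})=u_0-x_{n-1}$. The extra remarks about the Galois conjugate and the nontriviality of the extension are harmless but not needed beyond what the paper records.
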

\begin{proof}
We have $N_{n,n-1}(\nu+x_{n})=\nu^2-x_{n}^2
=\nu^2-(\nu+x_{n-1})
=u_0-x_{n-1}$.
\end{proof}

\begin{corollary}\label{corNotSq}
Let $(\nu,x_0)\in\Omega$. For each $n\ge1$, if $f_{n}$ is not a square in $\Q$, then the Galois group of the splitting field of $K_{n+2}/K_{n}$ is $D_4$. 
\end{corollary}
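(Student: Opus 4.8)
The plan is to apply Theorem \ref{thmQuad2} to the minimal polynomial of $x_{n+2}$ over $K_n$, namely $P(X)=X^4-2\nu X^2+(u_0-x_n)$, so that in the notation of that theorem $b=-2\nu$ and $d=u_0-x_n$. The key observation is that the three possibilities in Theorem \ref{thmQuad2} are governed by whether $d$ and $d(b^2-4d)$ are squares in $K_n$; to land in the $D_4$ case I must rule out that $d\in K_n^2$ and that $d(b^2-4d)\in K_n^2$. First I would note that $b^2-4d=4\nu^2-4(u_0-x_n)=4(\nu^2-u_0+x_n)=4(\nu+x_n)=4x_{n+1}^2$, using $u_0=\nu^2-\nu$. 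Hence $d(b^2-4d)=4x_{n+1}^2(u_0-x_n)$, and this is a square in $K_n$ if and only if $u_0-x_n$ is a square in $K_n$; in other words, $d\in K_n^2$ if and only if $d(b^2-4d)\in K_n^2$. So the three cases of Theorem \ref{thmQuad2} collapse to two: the extension is $V_4$ (equivalently $C_4$) when $u_0-x_n$ is a square in $K_n$, and $D_4$ otherwise.

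It therefore remains to show that under the hypothesis that $f_n=(u_{n-1}-x_0)(u_n-x_0)$ is not a square in $\Q$, the element $u_0-x_n$ is not a square in $K_n$. Here I would invoke the norm map and Lemma \ref{corNotSqGen2New00}: if $u_0-x_n$ were a square in $K_n$, then its norm $N_{n,0}(u_0-x_n)$ would be a square in $\Q$. Taking $j=0$, $\ell=0$ in Lemma \ref{corNotSqGen2New00} gives $N_{n,0}(u_0-x_n)=u_n-x_0$, so $u_n-x_0$ would be a rational square — but Lemma \ref{lemsequn} already tells us that (excluding $(2,1)$) $u_n-x_0$ is never a rational square for $n\ge1$, so this alone would suffice. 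The appearance of $f_n$ rather than $u_n-x_0$ in the statement suggests the intended argument is slightly different: one should instead use that $N_{n,n-1}(u_0-x_n)$ is a square in $K_{n-1}$, and compute this norm. Since the minimal polynomial of $x_n$ over $K_{n-1}$ is $X^2-(\nu+x_{n-1})$, the conjugate of $x_n$ over $K_{n-1}$ is $-x_n$, so $N_{n,n-1}(u_0-x_n)=(u_0-x_n)(u_0+x_n)=u_0^2-x_n^2=u_0^2-(\nu+x_{n-1})=u_1-x_{n-1}$ (using $u_1=u_0^2-\nu$). Hence if $u_0-x_n$ is a square in $K_n$, then $u_1-x_{n-1}$ is a square in $K_{n-1}$; applying $N_{n-1,0}$ and Lemma \ref{corNotSqGen2New00} again (with $j=1$, $n$ replaced by $n-1$) gives that $u_n-x_0$ is a square in $\Q$. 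To bring in $f_n$ one can be more economical: a square in $K_n$ restricted to $K_{n-2}$ via two norm steps, combined with tracking both $u_{n-1}-x_0$ and $u_n-x_0$, produces the product $f_n$ as a rational square. So the skeleton is: assume $u_0-x_n\in K_n^2$; push down by norms to conclude $f_n\in\Q^2$ (or directly $u_n-x_0\in\Q^2$); contradiction with the hypothesis (resp. with Lemma \ref{lemsequn}).

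The main obstacle is purely bookkeeping: getting the norm computation to yield exactly $f_n$ rather than a single term $u_n-x_0$, i.e.\ choosing the right intermediate field to descend to and the right auxiliary square. I expect the cleanest route is to observe, via Lemma \ref{corNotSqGen1}, that $\nu+x_n$ has norm $u_0-x_{n-1}$ from $K_n$ to $K_{n-1}$, and to combine $u_0-x_n = \frac{(u_0-x_n)(u_0+x_n)}{u_0+x_n}$-type manipulations with the identity $b^2-4d=4x_{n+1}^2$ already derived. Once the descent is set up correctly, the irreducibility of $P$ needed to even apply Theorem \ref{thmQuad2} follows as in the proof of Theorem \ref{theoremthin} from the assumption $(\nu,x_0)\in\Omega$ (the tower goes up by $2$ at each step). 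Everything else is a direct citation of Theorem \ref{thmQuad2}, Lemma \ref{corNotSqGen2New00} and Lemma \ref{lemsequn}.
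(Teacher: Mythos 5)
Your opening ``key observation'' is false, and the error is fatal to the rest of the argument. You write $b^2-4d=4(\nu+x_n)=4x_{n+1}^2$ and conclude that $d(b^2-4d)$ is a square in $K_n$ if and only if $d=u_0-x_n$ is. But $x_{n+1}\notin K_n$ --- that is exactly what $(\nu,x_0)\in\Omega$ guarantees, since $[K_{n+1}:K_n]=2$ --- so $b^2-4d=(2x_{n+1})^2$ is a \emph{non-square} element of $K_n$, and the three cases of Theorem \ref{thmQuad2} do not collapse to two. As a result, everything you do afterwards (descending $u_0-x_n$ by norms and invoking Lemma \ref{lemsequn}) only rules out $d\in K_n^2$, i.e.\ only excludes the Klein case; that part needs no hypothesis on $f_n$ at all and is already Corollary \ref{corNotSqNew00}. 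What remains, and what your proposal never addresses, is excluding the $C_4$ case, i.e.\ showing that $d(b^2-4d)=4(\nu+x_n)(u_0-x_n)$ is not a square in $K_n$. This is not a vacuous worry: the tower $K^{2,0}$ is cyclotomic and all of its quartic steps \emph{are} $C_4$ (this is precisely what the paper exploits later in the proof of Corollary \ref{CorKnux020}), so some hypothesis really is needed here, and it is the hypothesis on $f_n$.

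The paper's route, which you gesture at in your last paragraph but never carry out, is to descend the correct element, namely the product $(\nu+x_n)(u_0-x_n)$ rather than $u_0-x_n$ alone: by Lemma \ref{corNotSqGen1}, $N_{n,n-1}(\nu+x_n)=u_0-x_{n-1}$, and by Lemma \ref{corNotSqGen2New00}, $N_{n,n-1}(u_0-x_n)=u_1-x_{n-1}$; a further application of Lemma \ref{corNotSqGen2New00} gives $N_{n-1,0}\bigl((u_0-x_{n-1})(u_1-x_{n-1})\bigr)=(u_{n-1}-x_0)(u_n-x_0)=f_n$. Hence if $f_n$ is not a square in $\Q$, then $(\nu+x_n)(u_0-x_n)$ is not a square in $K_n$, which kills $C_4$; the Klein case is killed by Lemma \ref{lemsequn} via Corollary \ref{corNotSqNew00}; and $D_4$ is what is left. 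To repair your write-up you must delete the claimed equivalence between $d\in K_n^2$ and $d(b^2-4d)\in K_n^2$ and insert this two-step norm computation as a separate exclusion of the cyclic case.
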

\begin{proof}
Note that $f_{n}=(u_{n-1}-x_0)(u_{n}-x_0)$ is $N_{n-1,0}((u_0-x_{n-1})(u_1-x_{n-1}))$ by Lemma \ref{corNotSqGen2New00}, and $(u_0-x_{n-1})(u_1-x_{n-1})$ is $N_{n,n-1}((\nu+x_{n})(u_0-x_{n}))$ by Lemmas \ref{corNotSqGen2New00} and \ref{corNotSqGen1}. So our hypothesis implies that $(\nu+x_{n})(u_0-x_{n})$ is not a square in $K_{n}$. 

We now apply Theorem \ref{thmQuad2} to the minimal polynomial of $x_{n+2}$ over $K_{n}$: in our case, $b=-2\nu$ and $d=u_0-x_{n}$. We have
$$
b^2-4d=4\nu^2-4(u_0-x_{n})=4\nu^2-4(\nu^2-\nu- x_{n})=4(\nu+x_{n}),
$$
hence $d(b^2-4d)$ is not a square in $K_{n}$. Therefore, the Galois group of the splitting field of $K_{n+2}/K_{n}$ is not $C_4$. Since $u_{n}-x_0$ is not a square in $\Q$ by Lemma \ref{lemsequn}, by Corollary \ref{corNotSqNew00} this Galois group is not the Klein group, hence the conclusion by Theorem \ref{thmQuad2}. 
\end{proof}

We finish by a general lemma that will be used to prove Theorem \ref{lemlatticestructure} and Proposition \ref{CorKnux020}. 

\begin{lemma}\label{lemEllCurves}
Unless $\nu=2$, the $f_n$ can be a square for at most finitely many $n$. 
\end{lemma}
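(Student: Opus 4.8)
The plan is to show that for fixed $\nu\ge3$, there are only finitely many $n$ for which $f_n=(u_{n-1}-x_0)(u_n-x_0)$ is a perfect square in $\Q$. The key observation is that $f_n$ being a square amounts to finding an integer point on a fixed curve, so Siegel's finiteness theorem will do the job once we exhibit such a curve of genus $\ge1$. Recall from the Notation that for $\nu\ge3$ the sequence $(u_n)$ is strictly increasing, and by Lemma \ref{lemsequn} each $u_n-x_0$ is itself \emph{not} a rational square when $(\nu,x_0)\ne(2,1)$. So $f_n$ being a square is a genuinely nontrivial coincidence between two consecutive terms of a quadratically recursive sequence.

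First I would set $y=u_{n-1}$ (an integer depending on $n$) and recall $u_n=y^2-\nu$, so that
$$
f_n=(y-x_0)\bigl(y^2-\nu-x_0\bigr).
$$
Thus $f_n$ is a square in $\Q$ exactly when the affine curve
$$
\Ccal\colon\quad w^2=(y-x_0)\bigl(y^2-\nu-x_0\bigr)
$$
has an integer point with $y$-coordinate equal to $u_{n-1}$. The right-hand side is a cubic in $y$, so $\Ccal$ is (the affine model of) an elliptic curve provided the cubic has no repeated root, i.e.\ provided $x_0^2-\nu-x_0\ne0$, equivalently $\nu\ne x_0^2-x_0$. But $(\nu,x_0)\in\Omega$ forces $\nu\ne x_0^2-x_0$ (the pairs in $\Omega$ satisfy either $\nu>x_0^2-x_0$ or $\nu<x_0^2-x_0$, since these are exactly $\Omega_{\rm inc}$ and $\Omega_{\rm dec}$), so the cubic is separable and $\Ccal$ is an elliptic curve over $\Q$. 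By Siegel's theorem, $\Ccal$ has only finitely many integer points, hence only finitely many possible values of $y=u_{n-1}$; since $(u_n)$ is strictly increasing for $\nu\ge3$, it takes each value at most once, so only finitely many $n$ can satisfy $f_n\in\Q^2$.

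The one point that needs care — and is the only real obstacle — is checking that the cubic $(y-x_0)(y^2-\nu-x_0)$ is genuinely separable, i.e.\ that $x_0$ is not also a root of $y^2-\nu-x_0$; this is precisely the inequality $\nu\ne x_0^2-x_0$, which I would justify by quoting the dichotomy $\Omega=\Omega_{\rm inc}\cup\Omega_{\rm dec}$ together with the stated characterizations of those two sets. (Note the hypothesis $\nu\ne2$ is exactly what is needed: when $\nu=2$ the sequence $(u_n)$ is constant equal to $2$, so the argument collapses — indeed for $(2,1)$ all $f_n$ equal $(2-1)(2-1)=1$, a square.) Once separability is in hand, the application of Siegel's theorem and the monotonicity of $(u_n)$ close the argument immediately; I do not expect to need an explicit Weierstrass model or any computation of the curve's invariants.
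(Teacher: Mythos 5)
Your proof is correct and follows essentially the same route as the paper: rewrite $f_n=(u_{n-1}-x_0)(u_{n-1}^2-\nu-x_0)$, observe that $Y^2=(X-x_0)(X^2-(\nu+x_0))$ is an elliptic curve because the cubic is separable (the paper sees this by factoring it as $(X-x_0)(X-x_1)(X+x_1)$ with $x_1\notin\Q$, you via the $\Omega_{\rm inc}/\Omega_{\rm dec}$ dichotomy giving $\nu\ne x_0^2-x_0$ --- both valid), invoke Siegel's theorem, and use strict monotonicity of $(u_n)$ for $\nu\ge3$ to conclude.
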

\begin{proof}
For $n\ge1$, we have
$$
(u_{n-1}-x_0)(u_{n}-x_0)=(u_{n-1}-x_0)(u_{n-1}^2-\nu-x_0).
$$
Since the polynomial 
$$
P(X)=(X-x_0)(X^2-(\nu+x_0))=(X-x_0)(X-x_1)(X+x_1)
$$ 
has three distinct roots, $Y^2=P(X)$ defines an elliptic curve, so it has finitely many integral points. The quantity $(u_{n-1}-x_0)(u_{n}-x_0)$ is a square if and only if $u_{n-1}$ is the $X$-coordinate of such a point. If $\nu\ne2$, then the sequence $(u_n)_{n\ge0}$ is strictly increasing, so each possible $X$-coordinate of an integral point corresponds to exactly one value of $n$. 
\end{proof}

\section{Towers that are thin from $1$ but not from $0$}\label{secOmega1}

We denote by $\Omega^1$ the set of pairs $(\nu,x_0)$ in $\Omega$ such that the tower $(K_n)_{n\ge0}$ is not thin from $0$ (but it is thin from $1$ by Theorem \ref{theoremthin}). 

By Theorem \ref{generictower}, there exists $n\ge0$ such that the Galois group of the splitting field of $K_{n+2}/K_n$ is $V_4$, and for all $n\ge1$, the Galois group of the splitting field of $K_{n+2}/K_n$ is either $C_4$ or $D_4$. Therefore, the Galois group of the splitting field of $K_{2}/K_0$ is $V_4$. By Corollary \ref{corNotSqNew00}, this implies that $u_{0}-x_0$ is a square. Let $a$ be the non-negative integer such that $u_{0}-x_0=a^2$.

\begin{lemma}\label{lemothersqrtn}
Let $(\nu,x_0)\in\Omega^1$. The quadratic extensions of $\Q$ that lie in $K$ are 
$$
K_1=\Q(\sqrt{\nu+x_0})=\Q(\sqrt{(\nu-a)(\nu+a)}),\qquad \Q(\sqrt{2(\nu+a)}),\quad\textrm{and}\quad \Q(\sqrt{2(\nu-a)}).
$$ 
These three fields are distinct. 
\end{lemma}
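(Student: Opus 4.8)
The plan is to show first that the named fields actually are subfields of $K$, and then that they are the only quadratic subfields. For the containment: since $(\nu,x_0)\in\Omega^1$, the proof of Theorem \ref{theoremthin} (the last paragraph) tells us that $u_0-x_0=a^2$, that $K_2/K_0$ is Galois with Galois group the Klein group $V_4$, and that $x_2\sqrt{\nu-x_1}=|a|\in\Q$. Thus $\sqrt{\nu-x_1}=|a|/x_2\in K_2$, so $K_2$ contains $\sqrt{\nu+x_1}=x_2$, $\sqrt{\nu-x_1}$, and their ratio; the three nontrivial subgroups of $V_4$ correspond to three quadratic subfields of $K_2$, and these are exactly $\Q(\sqrt{\nu+x_1})\cdot\Q(\sqrt{\nu-x_1})$-data: namely $\Q(x_1)=K_1=\Q(\sqrt{\nu+x_0})$, $\Q(\sqrt{(\nu+x_1)(\nu-x_1)})$ — but $(\nu+x_1)(\nu-x_1)=\nu^2-x_1^2=\nu^2-\nu-x_0=a^2$ is a square, so that "third" field via a naive product is $\Q$; the correct three subfields are obtained from $\sqrt{\nu+x_1}$, $\sqrt{\nu-x_1}$, and $\sqrt{(\nu+x_1)}\sqrt{\nu-x_1}$ only after noting $x_1\notin\Q$. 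I would instead compute directly: $(\sqrt{\nu+x_1}\pm\sqrt{\nu-x_1})^2 = 2\nu \pm 2\sqrt{\nu^2-x_1^2}=2\nu\pm 2|a| = 2(\nu\pm a)$ (choosing signs of the square roots appropriately). Hence $\sqrt{2(\nu+a)}=\sqrt{\nu+x_1}+\sqrt{\nu-x_1}\in K_2$ and $\sqrt{2(\nu-a)}=\sqrt{\nu+x_1}-\sqrt{\nu-x_1}\in K_2$ (up to sign), giving two quadratic subfields of $K_2$ besides $K_1$, and these three are precisely the quadratic subfields of $K_2$ since $[\Q(x_2):\Q]=4$ with group $V_4$.

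For the "only" part: suppose $L\subseteq K$ is a quadratic extension of $\Q=F_0$. By Corollary \ref{lemgenericsubtower} applied to $\Kcal=(K_n)_{n\ge 0}$ (which is thin from $1$ by Theorem \ref{theoremthin}), either $L=K_1$, or $L\in\Kcal_1$, i.e. $[L:\Q]=2^1$ and $L\ne K_1$; in the latter case, by Lemma \ref{genericsubtower2}, $L$ is a subfield of $K_2$. So every quadratic subfield of $K$ lies in $K_2$, and we have just enumerated those: $K_1$, $\Q(\sqrt{2(\nu+a)})$, $\Q(\sqrt{2(\nu-a)})$.

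Finally, distinctness. The identity $\sqrt{2(\nu+a)}\cdot\sqrt{2(\nu-a)}=2\sqrt{\nu^2-a^2}=2\sqrt{\nu+x_0}\cdot\text{(no)}$ — more directly $2(\nu+a)\cdot 2(\nu-a)=4(\nu^2-a^2)=4(\nu+x_0)$, whose square class is that of $\nu+x_0$; so $\Q(\sqrt{2(\nu+a)})$ and $\Q(\sqrt{2(\nu-a)})$ are distinct from each other unless $2(\nu+a)$ and $2(\nu-a)$ differ by a square, equivalently $(\nu+a)(\nu-a)=\nu+x_0$ is a square, which is excluded (the tower increases at each step, so $\nu+x_0$ is not a rational square). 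They are each distinct from $K_1=\Q(\sqrt{\nu+x_0})$ because $2(\nu\pm a)$ times $\nu+x_0=(\nu+a)(\nu-a)$ has square class that of $2(\nu\mp a)$, which is not a square for the same reason (if $2(\nu-a)$ were a square then $2(\nu+a)\cdot 2(\nu-a)=4(\nu+x_0)$ forces $2(\nu+a)$ to be a square too, hence $(\nu+x_0)$ a square). The only subtlety is ruling out $a=0$: if $a=0$ then $u_0=x_0$, but the three quadratic subfields of a $V_4$-extension must be distinct, forcing $\nu+x_1\notin\Q$, which holds since $(\nu,x_0)\ne(2,1)$; I would record that $a\ne 0$ precisely because otherwise $\nu+x_0=\nu^2-\nu$ and one checks this is incompatible with the tower increasing at each step for the relevant pairs, or simply because $a=0$ would make $2(\nu+a)=2(\nu-a)$ and collapse the count below three, contradicting that $\Gal(\Q(x_2)/\Q)=V_4$.

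The main obstacle I expect is bookkeeping the square classes cleanly — making sure the sign choices for $\sqrt{\nu\pm x_1}$ are consistent so that the sums and differences land in $K$, and handling the degenerate possibility $a=0$ without circularity. Everything else is a direct consequence of Corollary \ref{lemgenericsubtower}, Lemma \ref{genericsubtower2}, and the structure of $V_4$-extensions already used in the proof of Theorem \ref{theoremthin}.
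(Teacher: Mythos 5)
Your proposal follows essentially the same route as the paper: reduce to $K_2$ via Lemma \ref{genericsubtower2}, identify $\Gal(K_2/\Q)$ as the Klein group via Theorem \ref{thmQuad2}, and exhibit the two extra quadratic subfields from $(\alpha\pm\beta)^2=2(\nu\pm a)$. The only divergence is the distinctness step, where the paper argues by degree (if two of the fields coincided, $\alpha$ and $\beta$ would lie in a quadratic field yet have degree $4$), while you argue by square classes; your parenthetical justification that $2(\nu\mp a)$ is not a rational square is not a valid implication as written, but the fact you need is immediate from the irreducibility of $X^4-2\nu X^2+a^2$ via Theorem \ref{thmQuad1}, so this is a cosmetic repair rather than a gap.
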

\begin{proof}
First note that by Lemma \ref{genericsubtower2}, the quadratic extensions that lie in $K$ actually lie in $K_2$. Since the minimal polynomial of $x_2$ over $\Q$ is $X^4-2\nu X^2+u_0-x_0$, Theorem \ref{thmQuad2} implies that $K_2/\Q$ is Galois with Galois group the Klein group. Hence there are exactly three distinct intermediate quadratic extensions of $\Q$ (as already seen in the proof of Theorem \ref{theoremthin}). 

Consider the two roots $\alpha=x_2=\sqrt{\nu+ x_1}$ and $\beta=\sqrt{\nu- x_1}$ of the minimal polynomial of $x_2$ over $\Q$. Note that $\beta$ lies in $K_2$. We have
$$
(\alpha\pm\beta)^2=2\nu\pm 2\sqrt{\nu^2- x_1^2}=2(\nu\pm \sqrt{\nu^2-\nu-x_0})=2(\nu\pm a), 
$$
hence the fields mentioned are in $K_2$. 

If we would have $\Q(\sqrt{2(\nu+a)})=\Q(\sqrt{2(\nu-a)})$, then both $\alpha+\beta$ and $\alpha-\beta$ would lie in $\Q(\sqrt{2(\nu+a)})$, hence also $\alpha$ and $\beta$ would, but they have degree $4$ over $\Q$ by assumption. Also, if $\Q(\sqrt{(\nu-a)(\nu+a)})$ would be equal to $\Q(\sqrt{2(\nu+a)})$, then $\sqrt{2(\nu-a)}$ would lie in $\Q(\sqrt{2(\nu+a)})$, which would be a contradiction. 
\end{proof}

Note that Lemma \ref{lemothersqrtn} requires that the tower increases in the two first steps: for $\nu=3$ and $x_0=5$ we have $a=1$, but $\Q(x_2)$ has degree $2$ over $\Q$ and the three fields collapse into $\Q(\sqrt2)$ (two of them) and $\Q$ (the third one). 

For $\varepsilon=\pm1$, write 
$$
y_\varepsilon=\sqrt{2(\nu+\varepsilon a)}.
$$

\begin{lemma}\label{lemlatticestructureNotKleinb}
Let $(\nu,x_0)\in\Omega^1$, distinct from $(2,1)$. The extension $K^{\nu,x_0}_3/\Q(y_\varepsilon)$ is not Klein. 
\end{lemma}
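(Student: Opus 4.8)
The plan is to argue by contradiction: suppose $K^{\nu,x_0}_3/\Q(y_\varepsilon)$ is Galois with Galois group the Klein four-group, and derive a constraint that, together with the explicit description of the quadratic subfields from Lemma \ref{lemothersqrtn}, forces $(\nu,x_0)=(2,1)$. Since $[K_3:\Q]=8$ and $[\Q(y_\varepsilon):\Q]=2$, we have $[K_3:\Q(y_\varepsilon)]=4$, so asking that this extension be Klein is asking that $K_3/\Q(y_\varepsilon)$ be Galois with group $V_4$. The natural tool is Theorem \ref{thmQuad2} applied to the minimal polynomial of $x_3$ over $\Q(y_\varepsilon)$. First I would record that the minimal polynomial of $x_3$ over $K_1$ is $X^4-2\nu X^2+u_0-x_1$ (the degree-$4$ analogue of the formula displayed after Lemma \ref{corNotSqGen2New00}, shifted by one), and since $\Q(y_\varepsilon)\subseteq K_2$ but $y_\varepsilon\notin K_1$ (the three quadratic subfields are distinct), $\Q(y_\varepsilon)$ and $K_1$ are incomparable; I need to verify that $x_3$ still has degree $4$ over $\Q(y_\varepsilon)$ and identify its minimal polynomial there. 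The cleanest route is: $x_3$ has degree $4$ over $K_1$, and $K_1(y_\varepsilon)=K_2$ (since $K_2/\Q$ is $V_4$ and $K_1,\Q(y_\varepsilon)$ are two of its three quadratic subfields), so $x_3$ has degree $4$ over $K_2$ would be false — rather $x_3$ has degree $2$ over $K_2$. Let me instead go through $\Q(y_\varepsilon)$ directly: $x_3^2=\nu+x_2$ and $x_2^2=\nu+x_1$, so $x_3$ satisfies $X^4-2\nu X^2+(\nu^2-\nu-x_1)=X^4-2\nu X^2+(u_0-x_1)$ over $K_1$; over $\Q(y_\varepsilon)$ one checks $x_3$ is still a root of a quartic, and irreducibility over $\Q(y_\varepsilon)$ must be checked (it should follow from $[K_3:\Q(y_\varepsilon)]=4$ once we know $K_3=\Q(y_\varepsilon)(x_3)$, which holds because $x_3$ generates $K_3$ over $\Q$).

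The key computation is then the discriminant-type invariant from Theorem \ref{thmQuad2}: writing the minimal polynomial of $x_3$ over $\Q(y_\varepsilon)$ as $X^4+bX^2+d$, the extension $K_3/\Q(y_\varepsilon)$ is Klein iff $d\in\Q(y_\varepsilon)^2$. I expect $d$ to be expressible via a norm argument in the style of Lemmas \ref{corNotSqGen2New00} and \ref{corNotSqGen1}: the relevant quantity should be $(\nu+x_2)(u_0-x_2)$ or a closely related product evaluated and pushed down, and the claim "$d$ is a square in $\Q(y_\varepsilon)$" should translate into "$2(\nu-\varepsilon a)$ or $2(\nu+\varepsilon a)$ or their product is a rational square" — contradicting the distinctness of the three quadratic fields in Lemma \ref{lemothersqrtn} unless we are in the excluded case $(2,1)$. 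Concretely, I would compute $N_{K_3/\Q(y_\varepsilon)}$-images or, more elementarily, compute $x_3^2-x_3'^2$ and $x_3^2 x_3'^2$ for the conjugate pair over $\Q(y_\varepsilon)$ and feed the results into Theorem \ref{thmQuad1}/\ref{thmQuad2}. The arithmetic obstruction "$2(\nu\pm a)$ not a rational square (for $(\nu,x_0)\neq(2,1)$)" should be available from the proof of Lemma \ref{lemothersqrtn} together with the standing fact that $\nu+x_0$ is not a square.

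The step I expect to be the main obstacle is pinning down the minimal polynomial of $x_3$ over $\Q(y_\varepsilon)$ and the precise value of its constant term $d$ — i.e.\ doing the field-theoretic bookkeeping to see exactly which of $2(\nu-a)$, $2(\nu+a)$, $(\nu-a)(\nu+a)$ the squareness of $d$ would force, and checking that this really is impossible outside $(2,1)$. One subtlety: $\Q(y_\varepsilon)$ is not contained in $K_1$, so I cannot directly reuse the $K_1$-minimal polynomial; I will likely need to write $x_1=\nu-\frac{y_\varepsilon^2}{2}$... no, rather $y_\varepsilon^2=2(\nu+\varepsilon a)$ and $a^2=u_0-x_0=\nu^2-\nu-x_0$, so $x_0$ is recovered from $y_\varepsilon$, but $x_1=\sqrt{\nu+x_0}$ is a further square root; the point is that $x_1\in K_3$, so over $\Q(y_\varepsilon)$ the element $x_1$ has degree $2$, and $K_2=\Q(y_\varepsilon)(x_1)=\Q(y_\varepsilon)(x_2)$ — wait, $x_2\in K_2$ and $[K_2:\Q(y_\varepsilon)]=2$, so $K_2=\Q(y_\varepsilon)(x_2)$ only if $x_2\notin\Q(y_\varepsilon)$, which holds since $x_2$ has degree $4$ over $\Q$. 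So the tower $\Q(y_\varepsilon)\subset K_2\subset K_3$ has each step of degree $2$, and I would compute the minimal polynomial of $x_3$ over $\Q(y_\varepsilon)$ as the norm form $(X^2-(\nu+x_2))(X^2-(\nu-x_2))=X^4-2\nu X^2+(\nu^2-x_2^2)=X^4-2\nu X^2+(\nu^2-\nu-x_1)$, where the conjugation is the nontrivial element of $\Gal(K_2/\Q(y_\varepsilon))$ sending $x_2\mapsto -x_2$ (this is a genuine $\Q(y_\varepsilon)$-automorphism since $x_2^2=\nu+x_1\in K_1\cap K_2$... but $x_1$ might not be fixed by it). Here is the real care point: the nontrivial element $\rho$ of $\Gal(K_2/\Q(y_\varepsilon))$ might send $x_1\mapsto -x_1$ rather than fixing $x_1$, in which case the "conjugate" quartic is $X^4-2\nu X^2+(\nu^2-\nu+x_1)$ and the relevant product is $(\nu^2-\nu-x_1)(\nu^2-\nu+x_1)=(u_0-x_1)(u_0+x_1)$; determining which case occurs — equivalently, whether $K_1\subseteq$ the fixed field $\Q(y_\varepsilon)$ (false) or $\rho|_{K_1}$ is nontrivial — requires identifying $\Gal(K_2/\Q)\cong V_4$ and tracking which quadratic subfield is fixed by which involution. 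Once that is settled, applying Theorem \ref{thmQuad2} to the correct quartic and invoking Lemmas \ref{lemsequn}, \ref{lemothersqrtn} to rule out squareness finishes the proof.
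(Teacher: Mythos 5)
Your high-level strategy --- compute the minimal polynomial of $x_3$ over $\Q(y_\varepsilon)$ and feed its constant term into Theorem \ref{thmQuad2} --- is indeed the paper's, but the two steps you yourself flag as unresolved are exactly where the content of the lemma lies, and your tentative guesses for both are incorrect. First, the quartic. The nontrivial element $\rho$ of $\Gal(K_2/\Q(y_\varepsilon))$ does not send $x_2\mapsto -x_2$: that automorphism fixes $x_1=x_2^2-\nu$, hence fixes $K_1$ rather than $\Q(y_\varepsilon)$. Writing $\alpha=x_2$, $\beta=\sqrt{\nu-x_1}$, one has $y_\varepsilon=\alpha+\varepsilon'\beta$ and $\rho(\alpha)=y_\varepsilon-\alpha$, so the minimal polynomial of $x_2$ over $\Q(y_\varepsilon)$ is $X^2-y_\varepsilon X+\varepsilon a$, and substituting $X^2-\nu$ gives the minimal polynomial of $x_3$ over $\Q(y_\varepsilon)$ as $X^4-(y_\varepsilon+2\nu)X^2+(\nu y_\varepsilon+\nu^2+\varepsilon a)$. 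Its constant term $d=\nu y_\varepsilon+\nu^2+\varepsilon a$ is an \emph{irrational} element of $\Q(y_\varepsilon)$; it is neither $u_0-x_1$ (that is the constant term over $K_1$) nor $(u_0-x_1)(u_0+x_1)=u_1-x_0$ (that is the constant term of the degree-$8$ minimal polynomial over $\Q$).

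Second, the obstruction. You expect ``$d$ is a square in $\Q(y_\varepsilon)$'' to reduce to ``$2(\nu\pm a)$ or their product is a rational square,'' i.e.\ to the distinctness of the three quadratic subfields from Lemma \ref{lemothersqrtn}. That cannot be the dividing line: for $(2,1)$ the three quadratic subfields $\Q(\sqrt2)$, $\Q(\sqrt3)$, $\Q(\sqrt6)$ are distinct, and yet $K^{2,1}_3/\Q(y_{-1})$ \emph{is} Klein, so any criterion that only sees the distinctness of these fields cannot separate $(2,1)$ from the remaining pairs. Because $d$ is irrational, deciding whether $d\in\Q(y_\varepsilon)^2$ requires writing $d=(v+wy_\varepsilon)^2$ and solving the system $2vw=\nu$, $v^2+2w^2(\nu+\varepsilon a)=\nu^2+\varepsilon a$ over $\Q$; eliminating $v$ yields a quadratic in $w^2$ whose discriminant is $16(\nu^4-2\nu^3+a^2)$, and the paper concludes by squeezing $\nu^4-2\nu^3+a^2$ strictly between the consecutive squares $(\nu^2-\nu-1)^2$ and $(\nu^2-\nu)^2$, using $\nu(\nu-2)>0$ --- which fails precisely at $\nu=2$. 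This Diophantine inequality argument is the heart of the proof and is absent from your proposal.
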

\begin{proof}
One can easily check that the minimal polynomial of $x_2$ over $\Q(y_\varepsilon)$ is 
$$
X^2-y_\varepsilon X+\varepsilon a. 
$$
Since $x_2=x_3^2-\nu$, the minimal polynomial of $x_3$ over $\Q(y_\varepsilon)$ is 
$$
(X^2-\nu)^2-y_\varepsilon(X^2-\nu)+\varepsilon a, 
$$
which simplifies to
$$
X^4-(y_\varepsilon+2\nu)X^2+\nu y_\varepsilon+\nu^2+\varepsilon a.
$$
We apply Theorem \ref{thmQuad2} with $d=\nu y_\varepsilon+\nu^2+\varepsilon a$, and we want to know when $d$ is a square in $\Q(y_\varepsilon)$. A square in $\Q(y_\varepsilon)$ has the form 
$$
(v+wy_\varepsilon)^2=v^2+2w^2(\nu+\varepsilon a)+2vwy_\varepsilon 
$$
for some rational numbers $v$ and $w$. Consider the following system: 
$$
\begin{cases}
2vw=\nu\\
v^2+2w^2(\nu+\varepsilon a)=\nu^2+\varepsilon a
\end{cases}
$$
For the pair $(2,1)$, the system has the solution $(v,w)=(1,1)$ (for $\varepsilon=-1$ and $a=1$), so the extension is Galois with Galois group the Klein group. Replacing $v$ in the second equation, we get 
$$
\left(\frac{\nu}{2w}\right)^2+2w^2(\nu+\varepsilon a)-\nu^2-\varepsilon a=0,
$$
hence 
$$
8(\nu+\varepsilon a)w^4-4(\nu^2+\varepsilon a)w^2+\nu^2=0
$$
that we see as a polynomial in $w^2$. Its discriminant is 
$$
16(\nu^2+\varepsilon a)^2-32(\nu+\varepsilon a)\nu^2=16(a^2+\nu^4-2\nu^3).
$$
We now show that the discriminant is not a square if $(\nu,x_0)\ne(2,1)$ (so there is no rational solution $w^2$, so the above system has no solution). We have 
$$
\nu^4-2\nu^3+a^2=(\nu^2-\nu)^2-\nu^2+a^2<(\nu^2-\nu)^2. 
$$ 
So it is enough to prove that $(\nu^2-\nu-1)^2<(\nu^2-\nu)^2-\nu^2+a^2$, namely, that $-2(\nu^2-\nu)+1<-\nu^2+a^2$, or equivalentlly that $\nu^2-2\nu+a^2-1>0$. But $\nu^2-2\nu+a^2-1\ge\nu^2-2\nu=\nu(\nu-2)>0$ is true for our choice of $\nu$. So the system has no solution in the rationals, except for the pair $(2,1)$. 
\end{proof}

We can now finish the proof of item 2 of Theorem \ref{main}. 

\begin{theorem}\label{lemlatticestructure}
Let $(\nu,x_0)\in\Omega^1$ distinct from $(2,1)$. 
\begin{enumerate}
\item For every $n\ge2$, $K_n$ is the unique subfield of $K$ of degree $2^{n}$ over $\Q$. 
\item The field $K$ has no proper subfield of infinite degree over $\Q$. 
\end{enumerate}
\end{theorem}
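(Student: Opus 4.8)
The plan is to leverage the structural results already established: Theorem \ref{theoremthin} (the tower is thin from $1$), Corollary \ref{lemgenericsubtower} (if some $\Fcal_\ell$ is non-empty then $K_{\ell_\Kcal+1}/K_2$ is Galois), Lemma \ref{lemothersqrtn} (the only quadratic subextensions of $K$ are $K_1$, $\Q(y_1)$, $\Q(y_{-1})$), and Lemma \ref{lemlatticestructureNotKleinb} (for $(\nu,x_0)\in\Omega^1\setminus\{(2,1)\}$, $K_3/\Q(y_\varepsilon)$ is not Klein). The argument naturally splits into two parts corresponding to the two items.

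For item 1, I would argue by contradiction. Suppose for some $n\ge2$ there is a subfield $L\subseteq K$ with $[L:\Q]=2^n$ and $L\ne K_n$; then $L\in\Kcal_n$, so $\Kcal_\ell$ is non-empty for some $\ell\le n$, and we may take $\ell=\ell_\Kcal$ with $2\le\ell\le n$. Pick $L\in\Kcal_\ell$. By Corollary \ref{lemgenericsubtower}, $K_{\ell+1}/K_2$ is Galois. I now want to descend to a Klein quartic contradiction. The argument inside Corollary \ref{lemgenericsubtower} shows (or a short extra step shows) that $L$ is a subfield of $K_{\ell+1}$ and $L\cap K_\ell$ is a subfield of $K_2$; since $[L:\Q]=2^\ell$, minimality of $\ell$ forces $L\cap K_\ell$ to have degree $\le 2$, i.e. $L\cap K_\ell$ is $\Q$, $K_1$, $\Q(y_1)$, or $\Q(y_{-1})$ (using Lemma \ref{lemothersqrtn}). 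Since $L\ne K_\ell$ and both $K_{\ell+1}/L$ and $K_{\ell+1}/K_\ell$ are quadratic hence Galois, Lemma \ref{Garling} gives that $K_{\ell+1}/(L\cap K_\ell)$ is Galois, with Galois group generated by two order-two elements, hence the Klein group $V_4$, and its fixed-field diagram exhibits $L\cap K_\ell$ with exactly three quadratic extensions inside $K_{\ell+1}$, among them $K_\ell$ and $L$.

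The crux — and the step I expect to be the main obstacle — is then to rule out $L\cap K_\ell$ being each of the four possibilities. If $L\cap K_\ell=K_1$, then $K_2\subseteq L\cap K_\ell$ would force $K_1=K_2$, impossible, so actually $L\cap K_\ell\subseteq K_2$ together with $[L\cap K_\ell:\Q]\le 2$ and $L\cap K_\ell$ a subfield of $K_2$ leaves $L\cap K_\ell\in\{\Q,K_1,\Q(y_1),\Q(y_{-1})\}$; but a quadratic subfield of $K_2$ whose degree-$4$ Galois closure inside the tower equals $K_{\ell+1}$ pins things down. The case $L\cap K_\ell=\Q$ forces $\ell=1$, contradicting $\ell\ge2$; the case $L\cap K_\ell=K_1$ forces $\ell=2$ and then $K_3/K_1$ Klein — but by Corollary \ref{corNotSqNew00} with $\ell=1,n=1$ (using Lemma \ref{lemsequn} when $(\nu,x_0)\ne(2,1)$, and noting $(2,1)\notin\Omega^1$... actually $(2,1)\in\Omega^1$, so here one uses Lemma \ref{lemsequnnu2x01}), the Galois group of $K_3/K_1$ is not $V_4$, a contradiction; and the cases $L\cap K_\ell=\Q(y_\varepsilon)$ force $\ell=2$ and $K_3/\Q(y_\varepsilon)$ Klein, directly contradicting Lemma \ref{lemlatticestructureNotKleinb} (which is exactly where the hypothesis $(\nu,x_0)\ne(2,1)$ is used). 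This exhausts all cases, proving item 1.

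For item 2, suppose $L$ is a proper subfield of $K$ with $[L:\Q]$ infinite. Since the tower $(K_n)_{n\ge1}$ is thin, Remark \ref{remgeneralthin} applies to the tower starting at $K_1$: writing $L'=LK_1$, either $L'=K$ or $L'$ has finite degree over $K_1$. If $L'$ has finite degree over $K_1$, then so does $L$ over $\Q$, contradiction; hence $LK_1=K$. Then $[K:L]\le[K_1:\Q]=2$, and since $L\ne K$ we get $[K:L]=2$, so $K/L$ is quadratic, hence so is every $K_{n+1}/(L\cap K_{n+1})$ for large $n$; in particular, for each large $n$, $L\cap K_n$ is a subfield of $K_n$ of index $2$ distinct from $K_{n-1}$ for all but finitely many $n$ — but for $n\ge3$ this would give a subfield of $K$ of degree $2^{n-1}$ different from $K_{n-1}$, contradicting item 1 once $2^{n-1}\ge 4$. (Equivalently: $L\cap K_n\in\Kcal_{n-1}$ for $n-1\ge2$, impossible by item 1.) This completes the proof.

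\begin{proof}
\textbf{Item 1.} Suppose, for contradiction, that for some $n\ge2$ there is a subfield $L$ of $K$ with $[L:\Q]=2^n$ and $L\ne K_n$. Then $L\in\Kcal_n$, so $\Kcal_\ell$ is non-empty for some $\ell\le n$; let $\ell=\ell_\Kcal$, so $2\le\ell\le n$, and pick $L\in\Kcal_\ell$. By the proof of Corollary \ref{lemgenericsubtower}, $L$ is a subfield of $K_{\ell+1}$, $L\cap K_\ell$ is a subfield of $K_2$, and $K_{\ell+1}/K_2$ is Galois; moreover $L\cap K_\ell$ is not $K_m$ for $1\le m\le\ell$. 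In particular $[L\cap K_\ell:\Q]\in\{1,2\}$, and since $L\cap K_\ell$ is a subfield of $K_2$, Lemma \ref{lemothersqrtn} gives
$$
L\cap K_\ell\in\{\Q,\ K_1,\ \Q(y_1),\ \Q(y_{-1})\}.
$$
Since $L\ne K_\ell$ and both $K_{\ell+1}/L$ and $K_{\ell+1}/K_\ell$ are quadratic, hence Galois, Lemma \ref{Garling} shows that $K_{\ell+1}/(L\cap K_\ell)$ is Galois with Galois group generated by two distinct elements of order $2$, hence equal to the Klein group $V_4$; thus $[K_{\ell+1}:L\cap K_\ell]=4$.

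If $L\cap K_\ell=\Q$, then $[K_{\ell+1}:\Q]=4$, so $\ell=1$, contradicting $\ell\ge2$.

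If $L\cap K_\ell=K_1$, then $[K_{\ell+1}:K_1]=4$, so $\ell=2$, and $K_3/K_1$ is Galois with group $V_4$. But by Lemma \ref{lemsequnnu2x01} (when $(\nu,x_0)=(2,1)$) or Lemma \ref{lemsequn} (when $(\nu,x_0)\ne(2,1)$), the hypothesis of Corollary \ref{corNotSqNew00} is satisfied with $n=1$ (taking $\ell=1$, respectively $\ell=0$), so the Galois group of the splitting field of $K_3/K_1$ is not the Klein group --- a contradiction.

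If $L\cap K_\ell=\Q(y_\varepsilon)$ for some $\varepsilon=\pm1$, then $[K_{\ell+1}:\Q(y_\varepsilon)]=4$, so $\ell=2$, and $K_3/\Q(y_\varepsilon)$ is Galois with Galois group the Klein group. This contradicts Lemma \ref{lemlatticestructureNotKleinb}, which applies since $(\nu,x_0)\ne(2,1)$.

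All four cases are impossible, so item 1 holds.

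\textbf{Item 2.} Suppose $L$ is a proper subfield of $K$ with $[L:\Q]$ infinite. The tower $(K_n)_{n\ge1}$ is thin by Theorem \ref{theoremthin}, so by Remark \ref{remgeneralthin} applied to this tower, any subfield of $K$ containing $K_1$ is either some $K_n$ or $K$. If $LK_1$ had finite degree over $K_1$, then $L$ would have finite degree over $\Q$, a contradiction; hence $LK_1$ has infinite degree over $K_1$, so $LK_1=K$. Then $[K:L]\le[K_1:\Q]=2$, and since $L\ne K$ we get $[K:L]=2$. Consequently, for each $n$ with $K_n\nsubseteq L$ (which holds for all large $n$, since otherwise $L\supseteq\bigcup_n K_n=K$), we have $K_n\cap L\subsetneq K_n$ and $LK_n=K_n$ would be false; more precisely $K_n\cap L$ has index $2$ in $K_n$, because $[K_n:K_n\cap L]\le[LK_n:L]\le[K:L]=2$ and the intersection is proper. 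Pick $n\ge3$ with $K_n\nsubseteq L$. Then $M:=K_n\cap L$ is a subfield of $K$ with $[M:\Q]=2^{n-1}\ge4$. If $M=K_{n-1}$, then $K_{n-1}\subseteq L$, so $[K:L]\le[K:K_{n-1}]$ is infinite, contradicting $[K:L]=2$. Hence $M\ne K_{n-1}$, so $M\in\Kcal_{n-1}$ with $n-1\ge2$, contradicting item 1. This proves item 2.
\end{proof}
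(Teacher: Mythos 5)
Your proof of item 1 breaks at the step ``Lemma \ref{Garling} shows that $K_{\ell+1}/(L\cap K_\ell)$ is Galois with Galois group generated by two distinct elements of order $2$, hence equal to the Klein group $V_4$.'' Two distinct involutions generate a dihedral group of order $2m$, where $m$ is the order of their product; this is $V_4$ only if the two involutions commute, which you have not shown. (This is exactly the subtlety that the paper's Theorem \ref{generictower} works around by passing to a \emph{central} involution $\sigma$ and forming $\langle\sigma,\tau_i\rangle$.) Since $[L\cap K_\ell:\Q]\le 2$, the group $\langle\tau_1,\tau_2\rangle=\Gal(K_{\ell+1}/L\cap K_\ell)$ has order $2^\ell$ or $2^{\ell+1}$, so for $\ell\ge3$ it cannot be Klein, and for $\ell=2$ it can perfectly well be $D_4$ of order $8$ with $L\cap K_\ell=\Q$. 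Everything downstream collapses: your dismissal of the case $L\cap K_\ell=\Q$ (``then $[K_{\ell+1}:\Q]=4$, so $\ell=1$'') is unjustified, and that is precisely the hardest case. The paper handles it by showing there can be \emph{no} intermediate field between $L\cap K_\ell$ and $L$ (any such field would be forced into $K_2$, hence into $L\cap K_\ell$), while the fact that every order-$2$ subgroup of a $2$-group sits inside an order-$4$ subgroup forces one to exist. The paper also first bounds $\ell_\Kcal$ using Lemma \ref{lemEllCurves} and Corollary \ref{corNotSq} (for $n\ge n_0$ the extension $K_{n+2}/K_n$ is not Galois, so $\ell_\Kcal\le n_0$), which your argument omits entirely because the (false) Klein claim would have pinned $\ell=2$ for free.

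There is also a smaller flaw in your item 2: to exclude $M=K_{n-1}$ you write ``so $[K:L]\le[K:K_{n-1}]$ is infinite,'' which is a non sequitur --- being bounded above by an infinite degree says nothing. The correct (and easy) fix is that $M=K_{n-1}$ gives $K_1\subseteq K_{n-1}\subseteq L$, and since $(K_m)_{m\ge1}$ is thin and $L$ has infinite degree, Remark \ref{remgeneralthin} forces $L=K$, a contradiction. (The paper's own item-2 argument is essentially this: pick $\alpha\in L\setminus K_2$, use item 1 to get $\Q(\alpha)=K_r$, hence $K_1\subseteq L$, and conclude by Remark \ref{remgeneralthin}.) Aside from that slip, your item 2 is sound, but it depends on item 1, so the proof as a whole is not complete until the Klein-group step is replaced.
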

\begin{proof}
First note that by Lemma \ref{lemEllCurves}, there exists an integer $n_0\ge1$ such that for each $n\ge n_0$, $f_{n}$ is not a square. 

By Corollary \ref{corNotSq}, the extension $K_{n+2}/K_{n}$ is not Galois when $n\ge n_0$. For the sake of contradiction, assume that for some $n\ge 2$, there is a subfield of $K$ of degree $2^{n}$ over $\Q$ which is not $K_{n}$, so that $\Kcal_n$ is non-empty (see notation in Section \ref{secThinnessGeneral}). 
By Corollary \ref{lemgenericsubtower}, the extension $K_{\ell_\Kcal+1}/K_2$ is Galois. Hence $\ell_\Kcal$ cannot be greater than or equal to $n_0+1$, as otherwise we would have $K_{\ell_\Kcal+1}/K_{\ell_\Kcal-1}$ Galois, which contradicts our hypothesis. Note that by definition $\ell_\Kcal$ is at least $2$. Let us write $\ell=\ell_\Kcal$. 

\emph{Case 1: $\ell=2$.} Let $L\in\Kcal_2$. By Lemma \ref{genericsubtower2}, $L$ is a subfield of $K_3$. By Lemma \ref{Garling}, $K_3/L\cap K_2$ is Galois. Note that $K_3/L\cap K_2$ is not of degree $2$, as we would have $L=K_2$. If $K_3/L\cap K_2$ is of degree $4$, then $L\cap K_2$ has to be $\Q(y_\varepsilon)$ (because $\Kcal_{\ge1}$ is thin), so the extension is Klein, contradicting Lemma \ref{lemlatticestructureNotKleinb}. Hence, $L\cap K_2=\Q$. Note that there cannot exist an intermediate field, say $M$, between $\Q$ and $L$, as otherwise $M$ would be one of $K_1$, $\Q(y_1)$ or $\Q(y_{-1})$, so $M$ would be a subfield of $L\cap K_2=\Q$, which is absurd. Nevertheless, since the Galois group of $K_3/\Q$ is a $2$-group, each of its subgroups of order $2$ is contained in a subgroup of order $4$ (see for instance \cite[Ch. 4, ej. 4.5, p.78]{Rotman}), so by Galois correspondence there must be an intermediate field between $\Q$ and $L$, which is a contradiction.

\emph{Case 2: $3\le\ell\le n_0$}. Let $L\in\Kcal_{\ell}$. By Lemma \ref{genericsubtower2}, $L$ is a subfield of $K_{\ell+1}$. Let $L'=L\cap K_\ell$. The degree of $L'$ over $\Q$ is at most $2$, because $\Kcal_{2}$, \dots, $\Kcal_{\ell-1}$ are empty, and if $[L'\colon\Q]=2^r$ for some $2\le r\le \ell-1$ then we would have $L'=K_{r}$, contradicting the fact that $\Kcal$ is thin from $1$. \\
\emph{Subcase 2.1: $[L'\colon \Q]=2$}. In this case we have $L'\ne K_1$, again because $\Kcal$ is thin from $1$, so $L'$ is either $\Q(y_1)$ or $\Q(y_{-1})$. There cannot exist an intermediate field $M$ between $L'$ and $L$, as otherwise we would have $M=K_j$ for some $2\le j<\ell$ (because $\Kcal_{2}$, \dots, $\Kcal_{\ell-1}$ are empty), so $M=K_j$ would be a subfield of $L$, contradicting the fact that $\Kcal$ is thin from $1$. We conclude as in the previous case. \\
\emph{Subcase 2.2: $L'=\Q$}. Again, there cannot be an intermediate field $M$ between $L$ and $\Q$, as otherwise, we would have either $M=K_2$ (again because $\Kcal_2$ is empty) which is absurd (again because $M$ would be a subfield of $L'=\Q$), or $M$ has degree $2$, so $M$ is either $K_1$ or one of the $\Q(y_\varepsilon)$, so $M$ would be a subfield of $K_3$, so of $L'=\Q$. We conclude as in the previous cases.

Finally, assume that there exists a subfield $L$ of $K$ which has infinite degree over $\Q$. Let $\alpha\in L\setminus K_2$ of degree $2^r$. Note that $r$ cannot be $2$ because $\Kcal_2$ is empty, and it cannot be $1$ because $\alpha\notin K_2$. So from item 1, we have $\Q(\alpha)=K_r$. Therefore, $L\supseteq K_r\supseteq K_1$, and since the tower $(K_r)_{r\ge1}$ is thin, we conclude with Remark \ref{remgeneralthin} that $L$ is equal to $K$. 
\end{proof}

We finish this section with a few results on $\Omega^1$ that will be useful in the next sections. Let us write
$$
\Sigma=\{(\nu,u_0-a^2)\in\Omega\colon 1\le a\le\nu-1\}. 
$$

\begin{lemma}\label{lemcarasquare}
We have $\Omega^1=\Sigma$. 
\end{lemma}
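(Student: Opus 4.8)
The plan is to reduce everything to item 2 of Theorem \ref{theoremthin}. By definition, $\Omega^1$ consists of those pairs $(\nu,x_0)\in\Omega$ for which the tower $(K_n)_{n\ge0}$ fails to be thin from $0$, and Theorem \ref{theoremthin}(2) identifies this condition precisely with ``$u_0-x_0$ is a square''. So the whole statement comes down to checking that, for a pair $(\nu,x_0)\in\Omega$, the integer $u_0-x_0$ is a square if and only if it can be written as $a^2$ with $1\le a\le\nu-1$, i.e. if and only if $(\nu,x_0)\in\Sigma$.

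For the inclusion $\Sigma\subseteq\Omega^1$ there is nothing to do beyond unwinding definitions: a pair in $\Sigma$ has $x_0=u_0-a^2$, so $u_0-x_0=a^2$ is a square, hence the pair lies in $\Omega^1$ by the reduction above.

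For the reverse inclusion $\Omega^1\subseteq\Sigma$, I would take $(\nu,x_0)\in\Omega^1$ and write $u_0-x_0=a^2$ with $a\ge0$ an integer (legitimate, since an integer that is a rational square is an integer square). The upper bound $a\le\nu-1$ is forced by $x_0\ge0$: from $a^2=u_0-x_0\le u_0=\nu^2-\nu<\nu^2$ we get $a<\nu$. The lower bound $a\ge1$ comes from the defining property of $\Omega$: if $a=0$ then $x_0=u_0=\nu^2-\nu$, so $\nu+x_0=\nu^2$ would be a perfect square, contradicting $[K_1:K_0]=2$ (a degree-two extension $\Q(\sqrt{\nu+x_0})/\Q$ requires $\nu+x_0$ not to be a perfect square). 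Hence $1\le a\le\nu-1$ and $(\nu,x_0)=(\nu,u_0-a^2)\in\Sigma$.

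There is essentially no obstacle here: the only nonelementary input is Theorem \ref{theoremthin}(2), already proved, together with the repeatedly used fact that $\nu+x_0$ is never a perfect square for $(\nu,x_0)\in\Omega$. As a sanity check, the pair $(2,1)$ has $u_0=2$, $a=1$, $\nu-1=1$, so it lies in both $\Omega^1$ and $\Sigma$, consistently with the statement (and with item 2 of Theorem \ref{theoremthin}, since $u_0-x_0=1$ is a square).
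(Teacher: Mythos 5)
Your proof is correct and follows essentially the same route as the paper: both directions reduce to Theorem \ref{theoremthin}(2) (equivalently, $u_0-x_0$ being a square), with the bound $a<\nu$ coming from $x_0\ge 0$ and the bound $a\ge 1$ coming from the fact that $\nu+x_0$ is never a perfect square for pairs in $\Omega$. No gaps.
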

\begin{proof}
If $(\nu,x_0)=(\nu,u_0-a^2)\in\Sigma$, then $u_0-x_0=\nu^2-\nu-(\nu^2-\nu-a^2)=a^2$ is a square. Assume that we have $u_0-x_0=a^2$ for some integer $a$, say non-negative, so in particular $x_0$ has the required shape. Note that $a$ cannot be $0$, since we assumed that $\nu+x_0$ is not a square. Since $x_0\ge0$, we have $u_0-a^2\ge0$, hence $a^2\le \nu^2-\nu<\nu^2$, hence $a<\nu$. 
\end{proof}

The next lemma is part of item 2 of Theorem \ref{main}. 

\begin{lemma}\label{lemsigmainfinite}
The set $\Omega^1$ is infinite. 
\end{lemma}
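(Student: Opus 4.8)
The claim is that $\Omega^1=\Sigma$ is infinite, so by Lemma \ref{lemcarasquare} it suffices to exhibit infinitely many pairs $(\nu,u_0-a^2)$ that actually lie in $\Omega$. The plan is to specialize: fix $a=\nu-1$ (or another convenient small choice), so that $x_0=u_0-a^2=\nu^2-\nu-(\nu-1)^2=\nu-1$, and then prove that the pair $(\nu,\nu-1)$ lies in $\Omega$ for infinitely many $\nu$. The point of this choice is that $\nu+x_0=2\nu-1$, so one needs $2\nu-1$ to behave well: in particular $2\nu-1$ should not be a perfect square, and more importantly the whole tower $K^{\nu,\nu-1}$ must be totally real with every step of degree $2$.

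The first step is to recall the sufficient conditions for membership in $\Omega$ collected in Section \ref{NotFacts}. The cleanest one is: if $\nu+x_0\equiv 2$ or $3\pmod 4$, then the tower increases at each step, so in particular $[K_{n+1}:K_n]=2$ for all $n$; and separately one needs total reality, which is governed by the $\Omega_{\rm inc}$ condition $\nu>x_0^2-x_0$ together with the arguments of \cite{VV15}. With $x_0=\nu-1$ we have $\nu+x_0=2\nu-1$, which is $\equiv 1$ or $3\pmod 4$ depending on the parity of $\nu$; choosing $\nu$ even makes $2\nu-1\equiv 3\pmod 4$, giving the degree condition. For total reality, $x_0=\nu-1$ gives $x_0^2-x_0=(\nu-1)(\nu-2)=\nu^2-3\nu+2$, which exceeds $\nu$ as soon as $\nu\ge4$, so $(\nu,\nu-1)\notin\Omega_{\rm inc}$ — meaning this particular choice lands in $\Omega_{\rm dec}$ if anywhere, and one must instead check $\nu<x_0^2-x_0$ and $x_0<\nu^2-\nu=u_0$. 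The latter, $\nu-1<\nu^2-\nu$, holds for $\nu\ge2$. So one should invoke the $\Omega_{\rm dec}$ criterion from \cite{VV15} rather than the increasing one; alternatively, pick a different family, e.g. $a$ fixed and $\nu\to\infty$, which lands in $\Omega_{\rm inc}$ and lets one use the mod-$4$ criterion directly.

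Concretely I would argue: take $a=1$, so $x_0=u_0-1=\nu^2-\nu-1$; then $\nu+x_0=\nu^2-1=(\nu-1)(\nu+1)$, and $\nu>x_0^2-x_0$ fails badly for large $\nu$, so this lands in $\Omega_{\rm dec}$ too. It seems every natural specialization forces $\Omega_{\rm dec}$, which is fine — the infinitude of $\Omega_{\rm dec}$-type pairs is what \cite{VV15,Ca18} establish. So the real plan is: cite \cite[Thm. 1.4]{VV15} or \cite[Ch. 2]{Ca18}, which produce infinitely many pairs in $\Omega$, and then check that among the pairs they produce (or a cofinite subfamily of an explicit one-parameter family they produce), the condition $u_0-x_0$ is a perfect square can be arranged — or rather, reverse-engineer: start from $(\nu,u_0-a^2)$ with $1\le a\le\nu-1$ and verify directly, using the $\Omega_{\rm dec}$ membership criterion recalled in Section \ref{NotFacts} (namely $\nu<x_0^2-x_0$ and $x_0<u_0$, which by \cite{VV15} suffice for $(\nu,x_0)\in\Omega$), that this holds for infinitely many $(\nu,a)$. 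With $a=\nu-1$: $x_0=\nu-1$, need $\nu<(\nu-1)(\nu-2)$, i.e. $\nu^2-3\nu+2>\nu$, i.e. $\nu^2-4\nu+2>0$, true for $\nu\ge4$; and $x_0=\nu-1<\nu^2-\nu=u_0$ for $\nu\ge2$. Finally one must confirm $\nu+x_0=2\nu-1$ is not a perfect square for infinitely many $\nu\ge4$ — trivially true (e.g. all $\nu$ with $2\nu-1$ between consecutive squares, which is all but finitely many in any interval; concretely $2\nu-1$ square has density zero). Hence $(\nu,\nu-1)\in\Sigma=\Omega^1$ for all $\nu\ge4$ (even, or with $2\nu-1$ non-square), giving infinitely many.

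The main obstacle is purely bookkeeping: being certain that the criterion from \cite{VV15} quoted in Section \ref{NotFacts} — that $\nu<x_0^2-x_0$ together with $x_0<u_0=\nu^2-\nu$ \emph{characterizes} $\Omega_{\rm dec}$, hence in particular \emph{implies} $(\nu,x_0)\in\Omega$ — is exactly what is recalled there (it is: the third bullet of the list in Section \ref{NotFacts} says precisely "the pairs $(\nu,x_0)$ in $\Omega$ such that $\nu<x_0^2-x_0$ and $x_0<\nu^2-\nu$ are precisely the ones in $\Omega_{\rm dec}$", so this is a biconditional within $\Omega$, and one needs the forward implication "$\nu<x_0^2-x_0$ and $x_0<u_0$ $\Rightarrow$ $(\nu,x_0)\in\Omega$", which is the content of \cite[Thm. 1.4]{VV15}). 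Assuming that, the proof is a two-line verification as above. I would write it as: \emph{By Lemma \ref{lemcarasquare}, $\Omega^1=\Sigma$, so it is enough to produce infinitely many pairs in $\Sigma$. Take $\nu\ge4$ with $2\nu-1$ not a square, and $a=\nu-1$, so $x_0=u_0-a^2=\nu-1$ and $1\le a\le\nu-1$. Then $x_0<\nu^2-\nu=u_0$ and $\nu<\nu^2-3\nu+2=(\nu-1)(\nu-2)=x_0^2-x_0$, so by \cite[Thm. 1.4]{VV15} $(\nu,x_0)\in\Omega$ (indeed in $\Omega_{\rm dec}$). Since there are infinitely many such $\nu$, $\Sigma$ is infinite.}
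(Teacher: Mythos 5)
Your overall strategy is the paper's: specialize $a$ so as to produce an explicit infinite subfamily of $\Sigma$ (the paper writes $a=\nu-k$ and lets $k$ vary; you fix $k=1$, i.e.\ $a=\nu-1$, $x_0=\nu-1$). But your final write-up has a genuine gap at exactly the point you flagged as "the main obstacle". Membership in $\Omega$ requires \emph{both} that $[K_{n+1}:K_n]=2$ for every $n$ and that the tower is totally real. The inequality criterion you invoke ($\nu<x_0^2-x_0$ and $x_0<u_0$) is recalled in Section \ref{NotFacts} only as a characterization of $\Omega_{\rm dec}$ \emph{within} $\Omega$; it governs the behaviour of the houses (hence total reality), not the degree condition. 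The degree condition is a separate arithmetic matter --- the paper explicitly says it does not know how to characterize the pairs for which the tower increases at each step --- and the only sufficient criterion available is $\nu+x_0\equiv 2,3\pmod 4$ (Prop.\ 2.15 of \cite{VV15}). Your substitute condition "$2\nu-1$ is not a perfect square" only guarantees $[K_1:K_0]=2$; it says nothing about $\nu+x_n$ being a non-square in $K_n$ for $n\ge1$. So "by \cite[Thm.\ 1.4]{VV15} $(\nu,x_0)\in\Omega$" is not justified.

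The fix is one you already wrote down in your exploratory discussion and then dropped: restrict to \emph{even} $\nu\ge4$, so that $\nu+x_0=2\nu-1\equiv 3\pmod 4$ and the mod-$4$ criterion gives the degree condition at every step; the inequalities then place the pair in $\Omega_{\rm dec}$. With that restored, your proof is the paper's proof with $k=1$ (the paper allows any odd $k$ and $\nu\equiv 0,2\pmod 4$, which changes nothing essential).
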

\begin{proof}
We know that if $\nu+x_0$ is congruent to $2$ or $3$ modulo $4$, then the tower increases at each step. In $\Omega^1=\Sigma$, we have $\nu+x_0=\nu^2-a^2$ for some $1\le a\le \nu-1$. Write $a=\nu-k$, so that $\nu+x_0=2\nu k-k^2$ for some $1\le k\le \nu-1$. For $\nu$ congruent to $0$ or $2$ modulo $4$, we can choose $k$ congruent to $1$ or $3$ modulo $4$. 
\end{proof}

\begin{lemma}\label{corcarasquarex0}
 If $(\nu,x_0)\in\Omega^1$ then $\nu-1\le x_0\le\nu^2-\nu-1$.
\end{lemma}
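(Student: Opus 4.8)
The plan is to combine the characterization $\Omega^1 = \Sigma$ (Lemma~\ref{lemcarasquare}) with the elementary constraints already recorded in the excerpt. By Lemma~\ref{lemcarasquare}, writing $(\nu,x_0)\in\Omega^1$ means $x_0 = u_0 - a^2 = \nu^2-\nu-a^2$ for some integer $a$ with $1\le a\le\nu-1$. The two inequalities to be proved then become statements purely about $\nu$ and $a$: the lower bound $x_0\ge\nu-1$ is $\nu^2-\nu-a^2\ge\nu-1$, i.e. $a^2\le\nu^2-2\nu+1=(\nu-1)^2$, which holds precisely because $a\le\nu-1$. The upper bound $x_0\le\nu^2-\nu-1$ is $\nu^2-\nu-a^2\le\nu^2-\nu-1$, i.e. $a^2\ge1$, which holds because $a\ge1$.

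So the whole argument is: invoke Lemma~\ref{lemcarasquare} to get the shape of $x_0$ and the range $1\le a\le\nu-1$, then read off the two inequalities by squaring the endpoints of that range. There is essentially no obstacle here — the "hard part" is simply not to forget that $a$ is an \emph{integer} and nonzero, which is exactly what was established inside the proof of Lemma~\ref{lemcarasquare} (the case $a=0$ is excluded because $\nu+x_0$ is assumed not to be a square). One should also note in passing that when $\nu\ge3$ the lower bound $x_0\ge\nu-1$ is strict unless $a=\nu-1$, but the stated inequality is non-strict, so no care is needed there.

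Concretely I would write: Let $(\nu,x_0)\in\Omega^1$. By Lemma~\ref{lemcarasquare}, $\Omega^1=\Sigma$, so $x_0=u_0-a^2=\nu^2-\nu-a^2$ with $1\le a\le\nu-1$. From $a\le\nu-1$ we get $a^2\le(\nu-1)^2=\nu^2-2\nu+1$, hence
$$
x_0=\nu^2-\nu-a^2\ge\nu^2-\nu-(\nu^2-2\nu+1)=\nu-1.
$$
From $a\ge1$ we get $a^2\ge1$, hence
$$
x_0=\nu^2-\nu-a^2\le\nu^2-\nu-1,
$$
which is the claimed pair of inequalities.
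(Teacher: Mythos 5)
Your proof is correct and is essentially identical to the paper's: both invoke Lemma~\ref{lemcarasquare} to write $x_0=\nu^2-\nu-a^2$ with $1\le a\le\nu-1$ and then translate the bounds on $a^2$ into the two claimed inequalities. No issues.
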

\begin{proof}
By Lemma \ref{lemcarasquare}, $x_0=\nu^2-\nu-a^2$ for some $a$ such that $1\le a\le\nu-1$. So we have $-(\nu-1)^2\le -a^2\le -1$, hence $\nu-1\le \nu^2-\nu-a^2\le \nu^2-\nu-1$. 
\end{proof}

\begin{lemma}\label{corcarasquareDEC}
In $\Omega^1$, $(2,1)$ and $(3,2)$ are the only pairs with $\nu\le3$, and they lie in $\Omega_{\rm inc}$. All the pairs $(\nu,x_0)\in\Omega^1$ with $\nu\ge4$ lie in $\Omega_{\rm dec}$. 
\end{lemma}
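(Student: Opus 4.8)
The plan is to use the classification of $\Omega$ into $\Omega_{\rm inc}$ and $\Omega_{\rm dec}$ recorded in Section~\ref{NotFacts}, together with the explicit parametrization $\Omega^1=\Sigma$ from Lemma~\ref{lemcarasquare} and the bounds on $x_0$ from Lemma~\ref{corcarasquarex0}. Recall that a pair $(\nu,x_0)\in\Omega$ lies in $\Omega_{\rm inc}$ precisely when $\nu>x_0^2-x_0$, and in $\Omega_{\rm dec}$ precisely when $\nu<x_0^2-x_0$ (and $x_0<\nu^2-\nu$, which in $\Omega^1$ is automatic by Lemma~\ref{corcarasquarex0} since there $x_0\le\nu^2-\nu-1$). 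So the whole statement reduces to comparing $\nu$ with $x_0^2-x_0$ for pairs $(\nu,x_0)=(\nu,u_0-a^2)$ with $1\le a\le\nu-1$.

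First I would dispatch the small cases $\nu=2$ and $\nu=3$ by hand. For $\nu=2$ the only possibility is $a=1$, giving $x_0=u_0-1=2-1=1$, i.e. the pair $(2,1)$; here $x_0^2-x_0=0<2=\nu$, so $(2,1)\in\Omega_{\rm inc}$ (consistent with the ``known facts'' list, which already states $(2,0)$ and $(2,1)$ lie in $\Omega_{\rm inc}$). For $\nu=3$ we have $u_0=6$ and $a\in\{1,2\}$, giving candidates $x_0=5$ and $x_0=2$; the first is excluded because $\nu+x_0=8$ is a cube... (no: $8$ is a square? $8$ is not a square, but one must check membership in $\Omega$); in fact for $x_0=5$ the remark after Lemma~\ref{lemothersqrtn} notes $\Q(x_2)$ has degree $2$, so $(3,5)\notin\Omega$. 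Thus the only admissible pair is $(3,2)$, for which $x_0^2-x_0=2<3=\nu$, so $(3,2)\in\Omega_{\rm inc}$.

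For $\nu\ge4$ I would show every pair $(\nu,x_0)\in\Omega^1$ lies in $\Omega_{\rm dec}$, i.e. $x_0^2-x_0>\nu$. By Lemma~\ref{corcarasquarex0} we have $x_0\ge\nu-1\ge3$, hence $x_0^2-x_0=x_0(x_0-1)\ge(\nu-1)(\nu-2)=\nu^2-3\nu+2$. It then suffices to note $\nu^2-3\nu+2>\nu$ for $\nu\ge4$, since $\nu^2-3\nu+2-\nu=\nu^2-4\nu+2=\nu(\nu-4)+2>0$ once $\nu\ge4$. Combined with the already-noted fact $x_0\le\nu^2-\nu-1<\nu^2-\nu$, this places $(\nu,x_0)$ in $\Omega_{\rm dec}$, completing the proof.

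The only genuinely delicate point is the bookkeeping in the $\nu=3$ case: one must be careful to exclude $(3,5)$ from $\Omega$ (the tower fails to be degree-$2$ at the second step, as flagged after Lemma~\ref{lemothersqrtn}), rather than assume every pair of the form $(\nu,u_0-a^2)$ automatically lies in $\Omega$ — Lemma~\ref{lemcarasquare} only asserts that the $\Omega^1$-pairs are exactly the $\Sigma$-pairs that already belong to $\Omega$. Everything else is the routine polynomial inequality above.
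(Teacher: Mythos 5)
Your proposal is correct and follows essentially the same route as the paper: invoke $\Omega^1=\Sigma$ and the bounds $\nu-1\le x_0\le\nu^2-\nu-1$, handle $\nu=2,3$ by hand (excluding $(3,5)$ because the tower fails to have degree $2$ at the second step), and for $\nu\ge4$ deduce $\nu<x_0^2-x_0$ from $x_0\ge\nu-1\ge3$. The paper's inequality is phrased as $\nu\le x_0+1<x_0^2-x_0$ rather than your $(\nu-1)(\nu-2)>\nu$, but this is only a cosmetic difference.
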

\begin{proof}
Let $(\nu,\nu^2-\nu-a^2)\in\Omega^1$. If $\nu=2$, then $1\le a\le \nu-1$ gives $a=1$, hence $x_0=1$ and $x_0^2-x_0=0<\nu$. If $\nu=3$, then $a=1$ or $a=2$, hence $x_0=5$ or $x_0=2$. For $x_0=5$, we have already seen that the tower does not increase at step $2$. So $x_0=2$, and we have $x_0^2-x_0=2<3=\nu$. So, in both cases the tower is totally real and we are in the increasing case. 

Assume $\nu\ge 4$. By Lemma \ref{corcarasquarex0}, we have $x_0<\nu^2-\nu$, and $x_0\ge\nu-1\ge3$ gives $\nu\le x_0+1<x_0^2-x_0$, so the tower is totally real and we are in the decreasing case. 
\end{proof}

\section{Case $\nu=2$}\label{subsecnu2}

In this section, we prove Item 3 of Theorem \ref{main} (by Lemma \ref{corcarasquareDEC}, the only pair in $\Omega^1$ with $\nu=2$ is the pair $(2,1)$). 

\begin{lemma}
$K^{2,0}_{n}$ is a proper subfield of $K^{2,1}_{n+1}$ for every $n\ge0$ (hence $K^{2,0}$ is a subfield of $K^{2,1}$). Moreover, $K^{2,1}$ is an infinite cyclotomic extension of $\Q$. 
\end{lemma}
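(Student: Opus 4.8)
The plan is to make the square-root iteration completely explicit by means of the half-angle identity $2+2\cos\theta=4\cos^{2}(\theta/2)$, and then recognize the resulting fields as maximal totally real subfields of cyclotomic fields.

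First I would fix the choice of the positive square root at each step; this is harmless because every $K^{2,x_0}_n$ will turn out to be Galois over $\Q$, hence independent of the signs. (Concretely: letting $P_0(X)=X-x_0$ and $P_{n+1}(X)=P_n(X^2-2)$, every value of $x^{2,x_0}_n$ produced by an arbitrary sequence of signs is a root of $P_n$, which has degree $2^n$; since $(2,x_0)\in\Omega$ forces $[K^{2,x_0}_n:\Q]=2^n$, the polynomial $P_n$ is the common minimal polynomial, so all sign choices give conjugate elements, hence the same field once that field is Galois over $\Q$.) With the positive choice, an immediate induction on $n$ using the identity above, together with the base cases $x^{2,0}_0=0=2\cos(\pi/2)$ and $x^{2,1}_0=1=2\cos(\pi/3)$, gives
$$
x^{2,0}_n=2\cos\!\left(\frac{\pi}{2^{n+1}}\right)=\zeta_{2^{n+2}}+\zeta_{2^{n+2}}^{-1},\qquad
x^{2,1}_n=2\cos\!\left(\frac{\pi}{3\cdot 2^{n}}\right)=\zeta_{3\cdot 2^{n+1}}+\zeta_{3\cdot 2^{n+1}}^{-1},
$$
where the absolute value disappears at each step because the relevant angle lies in $(0,\pi/2]$. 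Hence $K^{2,0}_n$ is the maximal totally real subfield of $\Q(\zeta_{2^{n+2}})$ and $K^{2,1}_n$ is the maximal totally real subfield of $\Q(\zeta_{3\cdot 2^{n+1}})$; in particular both are abelian, hence Galois, over $\Q$, which justifies the sign-independence claim above.

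Next, since $2^{n+2}$ divides $3\cdot 2^{n+2}$ we have $\Q(\zeta_{2^{n+2}})\subseteq\Q(\zeta_{3\cdot 2^{n+2}})$, and intersecting with $\R$ yields $K^{2,0}_n\subseteq K^{2,1}_{n+1}$. The inclusion is proper because $[K^{2,0}_n:\Q]=2^n<2^{n+1}=[K^{2,1}_{n+1}:\Q]$, the degree of the $m$-th field in any tower from $\Omega$ being $2^m$ (for $n=0$ this is simply $\Q\subsetneq\Q(\sqrt3)$). Taking the union over $n$ gives $K^{2,0}=\bigcup_n K^{2,0}_n\subseteq\bigcup_n K^{2,1}_{n+1}=K^{2,1}$. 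Finally, $K^{2,1}=\bigcup_{n\ge1}K^{2,1}_n\subseteq\bigcup_{n\ge1}\Q(\zeta_{3\cdot 2^{n+1}})$, which is an abelian (cyclotomic) extension of $\Q$, and $K^{2,1}$ is infinite over $\Q$ since $[K^{2,1}_n:\Q]=2^n$ is unbounded.

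I do not expect a genuine obstacle here: the only point needing a little care is the independence of $K^{2,x_0}_n$ from the sign choices, and once that is dispatched as above (or alternatively by checking directly that any sign choice gives $2\cos(k\pi/2^{n+1})$, resp.\ $2\cos(k\pi/(3\cdot 2^{n}))$, with $k$ prime to the modulus), the rest is a routine manipulation of roots of unity.
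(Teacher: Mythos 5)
Your proof is correct and follows essentially the same route as the paper's: identify $x^{2,0}_n$ and $x^{2,1}_n$ (via the half-angle identity) with $\zeta_{2^{n+2}}+\zeta_{2^{n+2}}^{-1}$ and $\zeta_{3\cdot 2^{n+1}}+\zeta_{3\cdot 2^{n+1}}^{-1}$, deduce the inclusion from $\Q(\zeta_{2^{n+2}})\subseteq\Q(\zeta_{3\cdot 2^{n+2}})$ together with the Galois-ness of the maximal real subfield (which is also how the paper disposes of the sign ambiguity, by working with the house $\overline{\lvert x_n\rvert}$), and get properness by comparing degrees. Your treatment of the sign choices via the iterated polynomial $P_{n+1}(X)=P_n(X^2-2)$ is just a more explicit version of the same point.
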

\begin{proof}
For any given $m\ge2$, let us denote by $\zeta_m$ a primitive $m$-th root of unity in $\C$. Note that we have $\overline{\lvert x_1^{2,0}\rvert}=\sqrt{2}=2\cos(\pi/4)$ and $\overline{\lvert x_1^{2,1}\rvert}=\sqrt{3}=2\cos(\pi/6)$, and more generally
$$
\overline{\lvert x_n^{2,0}\rvert}=2\cos\left(\frac{2\pi}{2^{n+2}}\right)\qquad 
\overline{\lvert x_{n+1}^{2,1}\rvert}=2\cos\left(\frac{2\pi}{3\times2^{n+2}}\right) 
$$
for every $n\ge0$. We have $\zeta_{2^{n+2}}=\zeta_{3\times2^{n+2}}^3\in\Q(\zeta_{3\times2^{n+2}})$, hence 
$$
\overline{\lvert x_n^{2,0}\rvert}=\zeta_{2^{n+2}}+\overline{\zeta_{2^{n+2}}}
$$
lies in the totally real part of $\Q(\zeta_{3\times2^{n+2}})$, which is $\Q\left(\overline{\lvert x_{n+1}^{2,1}\rvert}\right)$, and since $\Q(\zeta_{3\times2^{n+2}})$ is an abelian extension of $\Q$, its totally real part is Galois, so we have $\Q\left(\overline{\lvert x_{n+1}^{2,1}\rvert}\right)=\Q(x_{n+1}^{2,1})=K^{2,1}_{n+1}$. 

The fields $K^{2,0}_{n}$ and $K^{2,1}_{n+1}$ are distinct because they don't have the same degree over $\Q$. 
\end{proof}

We now list some easy facts. Recall that we defined $M_n=\Q(\sqrt3 x^{2,0}_n)$. 

\begin{lemma} We have: 
\begin{enumerate}
\item $\sqrt{3}\notin K^{2,0}$. 
\item $K^{2,1}_{n+1}=K^{2,0}_n(\sqrt{3})$ for every $n\ge0$. 
\item $K^{2,0}\ne K^{2,1}$.
\item $K^{2,1}=K^{2,0}(\sqrt{3})$. 
\item For each $n\ge0$, the field $M_{n+1}$ is strictly between $K^{2,0}_n$ and $K^{2,1}_{n+2}$. 
\item For each $n\ge0$, the fields $K^{2,0}_{n+1}$, $K^{2,1}_{n+1}$ and $M_{n+1}$ are distinct. 
\end{enumerate}
\end{lemma}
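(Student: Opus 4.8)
The six items are a mixture of linear-disjointness facts and degree bookkeeping built on the already-established identity $K^{2,1}_{n+1}=K^{2,0}_n(\sqrt3)$ and on the fact (Theorem \ref{theoremthin}) that the tower $(K^{2,0}_n)_{n\ge1}$ is thin. I would prove them in roughly the displayed order, as each is used in the next.

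\emph{Item 1.} To see $\sqrt3\notin K^{2,0}$, it suffices to show $\Q(\sqrt3)$ is not a subfield of $K^{2,0}$. If it were, then by Lemma \ref{genericsubtower2} applied to the thin tower $(K^{2,0}_n)_{n\ge1}$ (with $\ell=1$), a quadratic subfield of $K^{2,0}$ containing $\Q$ would have to sit inside $K^{2,0}_2=\Q(\sqrt{2+\sqrt2})$. But the only quadratic subfield of $K^{2,0}_2$ is $\Q(\sqrt2)$ (since $X^4-4X^2+2$ has Galois group $D_4$, as $d=2$ is not a square and $d(b^2-4d)=2\cdot 8=16$ is, wait — one must instead simply note $\Q(\sqrt{2+\sqrt2})/\Q$ is cyclic of degree $4$, hence has a unique quadratic subfield, namely $\Q(\sqrt2)$), and $\Q(\sqrt2)\ne\Q(\sqrt3)$. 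Alternatively, and more cleanly, I would argue directly that all $K^{2,0}_n$ are real subfields of $\Q(\zeta_{2^{n+2}})$, so $K^{2,0}\subseteq\bigcup_n\Q(\zeta_{2^{n+2}})$, whose only quadratic subfields are $\Q(\sqrt2)$, $\Q(i)$, $\Q(\sqrt{-2})$; only $\Q(\sqrt2)$ is real, so $\sqrt3\notin K^{2,0}$.

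\emph{Items 2--4.} Item 2 is immediate from the previously proved $K^{2,1}_{n+1}=\Q(x^{2,1}_{n+1})$ together with $\sqrt3\in K^{2,1}_{n+1}$ (true since $x^{2,1}_1=\sqrt3$ and $K^{2,1}_1\subseteq K^{2,1}_{n+1}$) and $x^{2,0}_n\in K^{2,1}_{n+1}$ (the lemma just proved shows $\overline{\lvert x^{2,0}_n\rvert}\in K^{2,1}_{n+1}$, and $K^{2,0}_n$ is totally real so equals $\Q(\overline{\lvert x^{2,0}_n\rvert})$): this gives $K^{2,0}_n(\sqrt3)\subseteq K^{2,1}_{n+1}$, and a degree count ($[K^{2,0}_n(\sqrt3):\Q]=2^{n+1}=[K^{2,1}_{n+1}:\Q]$, using Item 1 for $[K^{2,0}_n(\sqrt3):K^{2,0}_n]=2$) forces equality. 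Item 4 follows by taking the union over $n$. Item 3 follows from Item 1: $\sqrt3\in K^{2,1}\setminus K^{2,0}$.

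\emph{Items 5--6.} Set $L_{n+1}=M_{n+1}=\Q(\sqrt3\,x^{2,0}_n)$. Since $(\sqrt3\,x^{2,0}_n)^2=3(x^{2,0}_n)^2\in K^{2,0}_n$, we have $\sqrt3\,x^{2,0}_n\in K^{2,0}_n(\sqrt3)=K^{2,1}_{n+2}$ (using Item 2 with index $n+1$), so $M_{n+1}\subseteq K^{2,1}_{n+2}$; also $x^{2,0}_n=(\sqrt3\,x^{2,0}_n)^2\cdot(\sqrt3\,x^{2,0}_n)^{-1}/3\in M_{n+1}$ once one knows $(x^{2,0}_n)^2\in M_{n+1}$, which holds because $(x^{2,0}_n)^2=2+x^{2,0}_{n-1}\in K^{2,0}_{n-1}\subseteq\Q(\sqrt3\,x^{2,0}_{n-1})^{?}$ — cleaner: $(\sqrt3\,x^{2,0}_n)^2/3=(x^{2,0}_n)^2$ generates $K^{2,0}_{n-1}$ over $\Q$... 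I will instead argue via degrees: $[M_{n+1}:\Q]$ divides $[K^{2,1}_{n+2}:\Q]=2^{n+2}$, and $M_{n+1}\ni (x^{2,0}_n)^2\cdot 3$, hence $M_{n+1}\supseteq\Q((x^{2,0}_n)^2)=K^{2,0}_{n-1}$ (degree $2^{n-1}$) ... the tidy route is: $M_{n+1}$ is the fixed field of the order-two subgroup of $\Gal(K^{2,1}_{n+2}/\Q)\cong(\Z/2)\times(\Z/2^{n+1})$ that sends $x^{2,0}_n\mapsto -x^{2,0}_n$ and $\sqrt3\mapsto-\sqrt3$ simultaneously; this subgroup is distinct from the three index-that-give-$K^{2,0}_{n+1},K^{2,1}_{n+1},M_{n+1}$-type subgroups because of the cyclic structure of the $2^{n+1}$-factor. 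So the cleanest plan is to first identify $\Gal(K^{2,1}_{n+2}/\Q)$ via Item 2 as a subgroup of $\Gal(\Q(\zeta_{3\cdot2^{n+2}})/\Q)$ and read off all intermediate fields by the subgroup lattice of $C_2\times C_{2^{n+1}}$; Items 5 and 6 then become the statement that the three relevant index-two subgroups are pairwise distinct and sit below distinct order-four subgroups, which is a direct computation in that explicit abelian group. The main obstacle is bookkeeping the Galois group $\Gal(K^{2,1}_{n+2}/\Q)$ correctly and matching each named field ($K^{2,0}_{n+1}$, $K^{2,1}_{n+1}$, $M_{n+1}$) with its fixing subgroup; once that dictionary is set up, all six items are short consequences, with Item 1 (equivalently, linear disjointness of $\Q(\sqrt3)$ from $K^{2,0}$) being the one genuinely non-formal input.
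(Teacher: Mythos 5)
Items 1--4 of your proposal are correct and essentially follow the paper's route. For item 1 the paper is even quicker: since $u_0-x_0=2$ is not a square, the tower $(K^{2,0}_n)_{n\ge0}$ is thin from $n=0$, so $\sqrt3\in K^{2,0}$ would force $\Q(\sqrt3)=K^{2,0}_1=\Q(\sqrt2)$, which is absurd; your cyclotomic alternative also works. Your degree count for item 2, the union over $n$ for item 4, and item 3 are all as in the paper.

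The genuine gap is in items 5--6. First, you have an off-by-one error: the paper defines $M_n=\Q(\sqrt3\,x^{2,0}_n)$, so $M_{n+1}=\Q\bigl(\sqrt3\,x^{2,0}_{n+1}\bigr)$, not $\Q(\sqrt3\,x^{2,0}_n)$; with your indexing the containments to be proved, $K^{2,0}_n\subsetneq M_{n+1}\subsetneq K^{2,1}_{n+2}$, are not the right statements (your field contains only $K^{2,0}_{n-1}$). Second, your direct computation visibly breaks down --- $(\sqrt3 x)^2(\sqrt3 x)^{-1}/3$ equals $x/\sqrt3$, not $x$ --- and you then retreat to a plan (identify $\Gal(K^{2,1}_{n+2}/\Q)\cong C_{2^{n+1}}\times C_2$ and read off the subgroup lattice) without executing it; that identification is not free and is the content of a separate, later lemma in the paper, so invoking it here leaves items 5--6 unproved. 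The intended argument is short and elementary once the indexing is fixed: $\bigl(\sqrt3\,x^{2,0}_{n+1}\bigr)^2=3\bigl(2+x^{2,0}_n\bigr)$, so $x^{2,0}_n\in M_{n+1}$ and hence $K^{2,0}_n\subseteq M_{n+1}$; since $\sqrt3\,x^{2,0}_{n+1}$ is a square root of an element of $K^{2,0}_n$, it has degree at most $2^{n+1}$ over $\Q$, so $M_{n+1}$ is proper in $K^{2,1}_{n+2}$ (degree $2^{n+2}$); and each remaining strictness or distinctness claim reduces to item 1, because equality would place $\sqrt3$ in some $K^{2,0}_m$ (or place $x^{2,0}_{n+1}$ in $K^{2,1}_{n+1}$, forcing $K^{2,0}_{n+1}=K^{2,1}_{n+1}$ by degree, contradicting the first distinction). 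No Galois group computation is needed at this stage.
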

\begin{proof}
\begin{enumerate}
\item If not, since $(K^{2,0}_n)_{n\ge0}$ is thin, we would have $\Q(\sqrt3)=K^{2,0}_1=\Q(\sqrt2)$. 
\item This is immediate by the previous item and the fact that the degree of $K^{2,1}_{n+1}$ over $\Q$ is $2^{n+1}$ while the degree of $K^{2,0}_{n}$ over $\Q$ is $2^n$. 
\item This is because $\sqrt{3}$ lies in $K^{2,1}_1=\Q(\sqrt{3})$ but not in $K^{2,0}$.
\item The tower $(K^{2,1}_n)_{n\ge1}$ is thin, hence, since $K^{2,0}(\sqrt3)\subseteq K^{2,1}$ contains $\Q(\sqrt3)=K^{2,1}_1$ and has infinite degree, it is $K^{2,1}$ by Remark \ref{remgeneralthin}. 
\item We have $\sqrt3\in K^{2,1}_1\subseteq K^{2,1}_{n+2}$, and $x^{2,0}_{n+1}\in K^{2,1}_{n+2}$, hence $M_{n+1}$ is a subfield of $K^{2,1}_{n+2}$. Also, the square of $\sqrt3 x^{2,0}_{n+1}$ is $3(2+x^{2,0}_n)$, hence $K^{2,0}_n$ is a subfield of $M_{n+1}$. Since $\sqrt3x^{2,0}_{n+1}=\sqrt3\sqrt{2+x^{2,0}_n}=\sqrt{6+3x^{2,0}_n}$ has degree $\leq 2^{n+1}$ over $\Q$, $M_{n+1}$ is a proper subfield of $K^{2,1}_{n+2}$. If $K^{2,0}_n$ would be equal to $M_{n+1}$, we would have $\sqrt{3}x^{2,1}_{n+1}\in K^{2,0}_{n}\subseteq K^{2,0}_{n+1}$, hence $\sqrt{3}$ would lie in $K^{2,0}_{n+1}$, which is a contradiction. 
\item First note that $\sqrt{3}$ lies in $K^{2,1}_{n+1}$ but not in $K^{2,0}_{n+1}$, so these two fields are distinct. As in the previous item, if $M_{n+1}$ would be equal to $K^{2,0}_{n+1}$, then $\sqrt{3}$ would lie in $K^{2,0}_{n+1}$, which is not the case. Finally, if $M_{n+1}$ would be equal to $K^{2,1}_{n+1}$, then we would have $x^{2,0}_{n+1}$ in $K^{2,1}_{n+1}$ (because $\sqrt{3}\in K^{2,1}_{n+1}$), hence $K^{2,0}_{n+1}\subseteq K^{2,1}_{n+1}$, so these two fields would be equal because they have the same degree over $\Q$. 
\end{enumerate}
\end{proof}

\begin{lemma} 
The extension $K^{2,1}_{n+2}/\Q$ is Galois with Galois group $C_{2^{n+1}}\times C_2$
\end{lemma}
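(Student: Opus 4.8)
The plan is to realise $K^{2,1}_{n+2}$ as a subfield of a cyclotomic field and read off its Galois group from there. We have already shown in this section that for every $m\ge0$ the field $K^{2,1}_{m+1}$ is the maximal totally real subfield of $\Q(\zeta_{3\cdot 2^{m+2}})$; applying this with $m=n+1$ gives that $K^{2,1}_{n+2}$ is the maximal totally real subfield of $\Q(\zeta_{3\cdot 2^{n+3}})$. Since $\Q(\zeta_{3\cdot 2^{n+3}})/\Q$ is abelian, every one of its subfields is Galois over $\Q$; in particular $K^{2,1}_{n+2}/\Q$ is Galois, and by the Galois correspondence its group is the quotient $\Gal(\Q(\zeta_{3\cdot 2^{n+3}})/\Q)/\langle c\rangle$, where $c$ denotes complex conjugation, the generator of the subgroup fixing the maximal real subfield.

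It then remains to compute this quotient. I would identify $\Gal(\Q(\zeta_{3\cdot 2^{n+3}})/\Q)\cong(\Z/3\cdot 2^{n+3}\Z)^{\times}$ and use the Chinese Remainder Theorem to write it as $(\Z/3\Z)^{\times}\times(\Z/2^{n+3}\Z)^{\times}$. Here the first factor is $C_2$, and for $n\ge0$ the classical description of the unit group of a power of $2$ gives $(\Z/2^{n+3}\Z)^{\times}\cong\langle-1\rangle\times\langle 5\rangle\cong C_2\times C_{2^{n+1}}$ (see e.g.\ \cite[Prop. C12]{Mi14}), so the full Galois group is $C_2\times C_2\times C_{2^{n+1}}$; under this identification $c$ corresponds to $-1\bmod 3\cdot 2^{n+3}$, i.e.\ to the element $(g_1,g_2,0)$ whose first two coordinates are the non-trivial elements of the two $C_2$ factors. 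Quotienting $C_2\times C_2\times C_{2^{n+1}}$ by the order-two subgroup generated by this ``diagonal'' element yields $C_2\times C_{2^{n+1}}$, whence $\Gal(K^{2,1}_{n+2}/\Q)\cong C_{2^{n+1}}\times C_2$, as wanted.

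An equivalent route, if one prefers to stay inside the tower, is to start from the equality $K^{2,1}_{n+2}=K^{2,0}_{n+1}(\sqrt 3)$ of the preceding lemma: $K^{2,0}_{n+1}$ is the maximal real subfield of $\Q(\zeta_{2^{n+3}})$, hence Galois over $\Q$ with cyclic group $C_{2^{n+1}}$, while $\Q(\sqrt 3)/\Q$ is $C_2$; since $\sqrt 3\notin K^{2,0}$ by the preceding lemma, one has $K^{2,0}_{n+1}\cap\Q(\sqrt 3)=\Q$, so by Lemma \ref{Garling} (applied to $K^{2,1}_{n+2}/K^{2,0}_{n+1}$ and $K^{2,1}_{n+2}/\Q(\sqrt 3)$) the compositum is Galois over $\Q$, and the restriction map $\sigma\mapsto(\sigma|_{K^{2,0}_{n+1}},\sigma|_{\Q(\sqrt 3)})$ is an injection of $\Gal(K^{2,1}_{n+2}/\Q)$ into $C_{2^{n+1}}\times C_2$ between groups of the same order $2^{n+2}$, hence an isomorphism. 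I do not expect a genuine obstacle: the only points requiring care are the bookkeeping of the indices, the appeals to the classical structure of $(\Z/2^{k}\Z)^{\times}$ and of the maximal real subfields of cyclotomic fields, and checking that the argument is uniform in the small case $n=0$ (where $K^{2,1}_{2}=\Q(\sqrt 2,\sqrt 3)$ and the group is $C_2\times C_2$).
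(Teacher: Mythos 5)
Your argument is correct, but it reaches the conclusion by a genuinely different route than the paper. The paper never computes the Galois group explicitly from the cyclotomic realization: it only uses that realization to conclude that $K^{2,1}_{n+2}/\Q$ is abelian of degree $2^{n+2}$, and then pins down the isomorphism type of $G_{n+2}=C_{2^{\ell_m}}\times\dots\times C_{2^{\ell_1}}$ by pure counting --- exactly three index-two subgroups (from Lemma \ref{lemothersqrtn}, which gives exactly three quadratic subfields) forces $m=2$, and exactly three quartic subfields (obtained via Lemma \ref{genericsubtower2} and the analysis of $G_3$) forces $\ell_1=1$. You instead identify $K^{2,1}_{n+2}$ as the full maximal real subfield of $\Q(\zeta_{3\cdot 2^{n+3}})$ (which the first lemma of the section does establish, with the index shift you use) and compute $(\Z/3\cdot 2^{n+3}\Z)^{\times}/\langle -1\rangle$ directly via CRT and the structure of $(\Z/2^{k}\Z)^{\times}$; your bookkeeping of where complex conjugation sits and the resulting quotient $C_2\times C_{2^{n+1}}$ is right, including the base case $n=0$. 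Your computation is more explicit and has the advantage of not needing Lemma \ref{lemothersqrtn} as an input (indeed it could be used to rederive the count of quadratic and quartic subfields), whereas the paper's argument stays entirely inside the tower machinery it has already built. One small caveat on your second route: to apply Lemma \ref{Garling} with $K_2=\Q(\sqrt3)$ you need $K^{2,1}_{n+2}/\Q(\sqrt3)$ to be Galois, which is not known a priori from the tower alone; you should either first invoke the abelianness of $K^{2,1}_{n+2}/\Q$ (at which point the restriction-map argument is the standard compositum fact and goes through), or drop that route in favour of the first, which is self-contained.
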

\begin{proof}
Since $K^{2,1}$ is cyclotomic, every extension $K^{2,1}_{n+2}/\Q$ is a degree $2^{n+2}$ abelian Galois extension of $\Q$. Hence its Galois group $G_{n+2}$ is $C_{2^{\ell_m}}\times\dots\times C_{2^{\ell_1}}$ for some $\ell_j$ such that $\ell_1+\dots+\ell_m=n+2$. Assume $\ell_m\geq\dots\geq\ell_1\geq1$. Since $K^{2,1}_{n+2}$ has exactly three distinct subfields of degree $2$ over $\Q$ by Lemma \ref{lemothersqrtn}, by Galois correspondence, $G_{n+2}$ has exactly three subgroups of index $2$.  In particular it is not cyclic, so $m\ge2$. Let $a_j=2^{\ell_j-1}$ for $j=1,\dots,m$ (so each $a_j$ has order two in $C_{2^{\ell_j}}$). If $m\ge3$, then $(2^{\ell_m-1},0,0,\dots,0)$, $(0,2^{\ell_{m-1}-1},0,\dots,0)$, $(0,0,2^{\ell_{m-2}-1},0,0,\dots,0)$, and $(2^{\ell_m-1},2^{\ell_{m-1}-1},0,\dots,0)$ generate four distinct subgroups of order $2$, hence there are at least $4$ distinct subgroups of index $2$ (see \cite[Ch. 10, ex. 10.54, p. 341]{Rotman}), which is a contradiction. So we have $m=2$. By Lemma \ref{genericsubtower2}, the only quartic extensions of $\Q$ that lie in $K^{2,1}$ lie in $K^{2,1}_3$. Since $m=2$, the only option for $G_3$ is $C_4\times C_2$, which has exactly three subgroups of order two, hence there are exactly three quartic extensions of $\Q$ that lie in $K^{2,1}$. If $\ell_1\ge2$, then we would have four groups of order $4$ that lie in $G_{n+2}$: $<(2^{\ell_2-2},0)>$, $<(0,2^{\ell_1-2})>$, $<(2^{\ell_2-2},2^{\ell_1-2})>$ and $<(2^{\ell_2-1},0),(0,2^{\ell_1-1})>$. Hence there would be four subgroups of index four, so by Galois correspondence, there would be four quartic extensions of $\Q$ lying in $K^{2,1}$, which is a contradiction. 
\end{proof}

\begin{lemma} 
For every $n\ge0$ and every $\ell=1,\dots,n+1$, the subfields of $K^{2,1}_{n+2}$ of degree $2^\ell$ over $\Q$ are $K^{2,0}_\ell$, $K^{2,1}_\ell$ and $M_\ell$
\end{lemma}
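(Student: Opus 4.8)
The plan is to proceed by induction on $n$, using the structure theorem $G_{n+2}\cong C_{2^{n+1}}\times C_2$ for the Galois group of $K^{2,1}_{n+2}/\Q$ established in the previous lemma, together with the Galois correspondence. The base case $n=0$ is already settled: the subfields of $K^{2,1}_2$ of degree $2$ are $K^{2,0}_1=\Q(\sqrt2)$, $K^{2,1}_1=\Q(\sqrt3)$ and $M_1=\Q(\sqrt6)$, which is exactly the list of three quadratic subfields provided by Lemma \ref{lemothersqrtn} (applied to $(2,1)\in\Omega^1$). For the inductive step, I would fix $n\ge1$, assume the claim for all smaller values, and work with $G=G_{n+2}\cong C_{2^{n+1}}\times C_2$, writing the first factor additively with generator $g$ and the second with generator $h$.

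The key computation is to enumerate, for each $\ell$ with $1\le\ell\le n+1$, the subgroups of $G$ of index $2^\ell$, equivalently of order $2^{n+2-\ell}$. Because $G$ has a cyclic part of corank one, one checks directly that for $1\le\ell\le n$ there are exactly three subgroups of index $2^\ell$: the subgroup $\langle 2^\ell g\rangle\times C_2$ (which is the ``generic'' one containing the $C_2$-factor), the cyclic subgroup $\langle(2^{\ell-1}g,h)\rangle$, and the subgroup $\langle 2^{\ell}g\rangle\times\{0\}$ together with... more carefully: the three subgroups of index $2^\ell$ are $\langle 2^{\ell-1}g\rangle$ (cyclic of order $2^{n+2-\ell}$, not containing $h$), $\langle 2^\ell g,\,h\rangle$, and $\langle 2^{\ell-1}g + h\rangle$ when $\ell\le n$; for $\ell=n+1$ the index is $2^{n+1}$ so the subgroups have order $2$, and there are again exactly three, namely $\langle 2^n g\rangle$, $\langle h\rangle$, $\langle 2^n g+h\rangle$. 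In every case there are precisely three such subgroups, matching the three candidate fields. To finish I would identify which subgroup fixes which field: $K^{2,1}_\ell$ is fixed by the subgroup of $G$ acting trivially on $K^{2,1}_\ell$, which since $K^{2,1}$ is cyclotomic corresponds to one of these three; $K^{2,0}_\ell$ is fixed by the subgroup whose fixed field does not contain $\sqrt3$ (the unique one of the three with that property, namely the cyclic subgroup that projects onto the full $C_2$ factor — equivalently the subgroup containing $h$ is the one whose fixed field is contained in $K^{2,1}_{n+1}$, i.e. misses... ) and $M_\ell=\Q(\sqrt3\,x^{2,0}_\ell)$ is the remaining one. I would use the already-established containments $K^{2,0}_{\ell-1}\subset M_\ell\subset K^{2,1}_{\ell+1}$, $K^{2,0}_{\ell-1}\subset K^{2,0}_\ell$, $\sqrt3\in K^{2,1}_\ell\subset K^{2,1}_{\ell+1}$, and the pairwise-distinctness of $K^{2,0}_\ell$, $K^{2,1}_\ell$, $M_\ell$ from the previous lemma, to pin down that these three fields are distinct subfields of $K^{2,1}_{n+2}$ of degree $2^\ell$; since the Galois correspondence says there are exactly three, these must be all of them.

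The main obstacle I anticipate is the bookkeeping in the subgroup count: one must be careful that for $\ell\le n$ there really are exactly three subgroups of $C_{2^{n+1}}\times C_2$ of the given index and no more (the count can jump if one is careless about the two-torsion), and equally careful that the three candidate subfields are genuinely the fixed fields of three \emph{distinct} subgroups. A clean way to avoid case analysis on $\ell$ is to observe that every subgroup $H$ of index $2^\ell$ in $G$ contains the unique subgroup $2^{n+1-\ell}G_{\mathrm{2-part}}$... actually the cleanest route: the subgroups of index $2^\ell$ in $C_{2^{n+1}}\times C_2$ are in bijection (via containing a common subgroup or via quotients) with the subgroups of index $2$ in $C_{2^{\ell+1}}\times C_2$ when $\ell\le n$, and $C_{2^{\ell+1}}\times C_2$ has exactly three subgroups of index $2$ since it is a non-cyclic abelian $2$-group with $2$-rank $2$. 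This reduces everything to the rank-$2$ fact already used in the proof of the preceding lemma, so no new group theory is needed. The remaining routine verifications — that $[M_\ell:\Q]=2^\ell$, and the displayed squares $(\sqrt3\,x^{2,0}_\ell)^2=3(2+x^{2,0}_{\ell-1})$ giving the containments — I would cite from the earlier lemma rather than redo.
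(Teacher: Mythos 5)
Your argument is correct and is essentially the paper's: the paper likewise just records that $C_{2^{n+1}}\times C_2$ has exactly three subgroups of each order $2^{n+2-\ell}$ for $\ell=1,\dots,n+1$ (the verification is left to the reader there) and concludes by the Galois correspondence, since $K^{2,0}_\ell$, $K^{2,1}_\ell$ and $M_\ell$ are already known from the preceding lemmas to be three distinct degree-$2^\ell$ subfields of $K^{2,1}_{n+2}$. The induction on $n$ and the attempted matching of individual subgroups to individual fields are superfluous, as you yourself observe at the end: the count of three together with the three known distinct fields already closes the argument.
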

\begin{proof}
We let to the reader show that the group $C_{2^{n+1}}\times C_2$ has exactly three subgroups of order $2^\ell$ (hence also of order $2^{n+2-\ell}$): $<(2^{n+1-\ell},0)>$, $<(2^{n+1-\ell},1)>$ and $<(0,1),(2^{n-\ell},0)>$. 
\end{proof}

\begin{lemma} 
For each $n\ge0$, $M_{n+1}$ does not contain $M_n$
\end{lemma}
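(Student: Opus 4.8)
Since $x^{2,0}_0=0$ gives $M_0=\Q$, we take $n\ge1$. The plan is to place all the relevant fields inside the abelian extension $K^{2,1}_{n+2}/\Q$, use the already established list of its subfields, and then run a small counting argument on subgroups of order $2$ in abelian $2$-groups.

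First I would fix the elementary data. By the lemma placing $M_{n+1}$ strictly between $K^{2,0}_n$ and $K^{2,1}_{n+2}$, we have $K^{2,0}_n\subsetneq M_{n+1}\subseteq K^{2,1}_{n+2}$ with $[M_{n+1}:\Q]=2^{n+1}$. Since $K^{2,1}_{n+2}/\Q$ is Galois with abelian group $C_{2^{n+1}}\times C_2$, the subextension $M_{n+1}/\Q$ is again Galois, with abelian group $H$ of order $2^{n+1}$.

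Next I would count the subfields of $M_{n+1}$ of degree $2^{n}$ over $\Q$. Any such field is also a degree-$2^{n}$ subfield of $K^{2,1}_{n+2}$, hence equals one of $K^{2,0}_n$, $K^{2,1}_n$, $M_n$, by the lemma classifying the subfields of $K^{2,1}_{n+2}$ (applied with $\ell=n$). Now $K^{2,0}_n\subseteq M_{n+1}$, while $K^{2,1}_n\not\subseteq M_{n+1}$: otherwise $\sqrt3\in K^{2,1}_n\subseteq M_{n+1}$, so $K^{2,1}_{n+1}=K^{2,0}_n(\sqrt3)\subseteq M_{n+1}$, and comparing degrees (both equal to $2^{n+1}$) forces $M_{n+1}=K^{2,1}_{n+1}$, contradicting the distinctness of $K^{2,1}_{n+1}$ and $M_{n+1}$. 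Hence $M_{n+1}$ has at most two subfields of degree $2^{n}$, equivalently $H$ has at most two subgroups of order $2$. Since a non-cyclic abelian $2$-group contains a copy of $(\Z/2)^2$ and therefore has at least three subgroups of order $2$, the group $H$ must be cyclic; consequently $M_{n+1}$ has exactly one subfield of degree $2^{n}$, which is necessarily $K^{2,0}_n$. Because $M_n\ne K^{2,0}_n$ (the three fields $K^{2,0}_n$, $K^{2,1}_n$, $M_n$ being distinct), we conclude $M_n\not\subseteq M_{n+1}$, that is, $M_{n+1}$ does not contain $M_n$.

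The argument is short because the lattice of subfields of $K^{2,1}$ has already been pinned down; the single point I would be careful about is the implication "$M_{n+1}$ has at most two subfields of index $2^{n}$ $\Longrightarrow$ $\Gal(M_{n+1}/\Q)$ is cyclic". This is exactly where the Galois property of $M_{n+1}/\Q$ is used, which is why the argument routes through $K^{2,1}$ being contained in a cyclotomic field rather than through the thinness of $(K^{2,1}_n)_{n\ge1}$ alone, together with the elementary count of subgroups of order $2$ in an abelian $2$-group. Everything else is bookkeeping with the subfields listed in the preceding lemmas.
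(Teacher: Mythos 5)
Your proof is correct, but it takes a genuinely different route from the paper's. The paper argues directly with elements: if $M_n\subseteq M_{n+1}$, then $M_{n+1}$ contains both $x^{2,0}_n$ (since $K^{2,0}_n\subseteq M_{n+1}$) and $\sqrt3\,x^{2,0}_n$, hence $\sqrt3$, hence $x^{2,0}_{n+1}$, forcing $K^{2,0}_{n+1}\subseteq M_{n+1}$ and (by degrees) $M_{n+1}=K^{2,0}_{n+1}$, contradicting the distinctness of $K^{2,0}_{n+1}$, $K^{2,1}_{n+1}$, $M_{n+1}$. You instead go through the Galois correspondence: $M_{n+1}/\Q$ is abelian of degree $2^{n+1}$, its degree-$2^n$ subfields are among $K^{2,0}_n$, $K^{2,1}_n$, $M_n$ by the classification lemma, you rule out $K^{2,1}_n$, and then the count of order-$2$ subgroups forces $\Gal(M_{n+1}/\Q)$ to be cyclic, leaving $K^{2,0}_n$ as the unique such subfield. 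Both arguments are sound and rest only on lemmas proved earlier in the section, so there is no circularity; yours is heavier in that it invokes the $C_{2^{n+1}}\times C_2$ structure and the full subfield classification of $K^{2,1}_{n+2}$, where the paper needs only the definition of $M_n$ and the distinctness statement. Note that the two proofs are close cousins: your hypothesis $M_n\subseteq M_{n+1}$ together with $K^{2,0}_n\subseteq M_{n+1}$ already puts the compositum $M_nK^{2,0}_n\ni\sqrt3$ inside $M_{n+1}$, which is exactly the paper's shortcut. Your explicit restriction to $n\ge1$ is a good catch --- $M_0=\Q$ makes the statement degenerate at $n=0$, and the paper's own proof also tacitly needs $x^{2,0}_n\ne0$.
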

\begin{proof}
If not, then $M_{n+1}=\Q(\sqrt{3}x^{2,0}_{n+1})$ contains both $K^{2,0}_n=\Q(x^{2,0}_n)$ and $\sqrt{3}x^{2,0}_n$, hence it contains $\sqrt{3}$, hence it contains $x^{2,0}_{n+1}$. This is a contradiction because we would have $K^{2,0}_{n+1}\subseteq M_{n+1}$. 
\end{proof}

Therefore, so far, we know the lattice of subfields of $K^{2,1}$ that have finite degree over $\Q$ ---  see Figure \ref{fig:SubCamp21Intro}. 

\begin{lemma}
The only proper subfield of $K^{2,1}$ of infinite degree over $\Q$ is $K^{2,0}$.
\end{lemma}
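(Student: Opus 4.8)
The plan is to show that any proper subfield $L$ of $K^{2,1}$ of infinite degree over $\Q$ must coincide with $K^{2,0}$. I would argue by cases according to whether $L$ contains $\sqrt 3$ or not.

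First I would dispose of the case $\sqrt3\in L$. In that case $L$ contains $\Q(\sqrt3)=K^{2,1}_1$, and since the tower $(K^{2,1}_n)_{n\ge1}$ is thin (item 1 of Theorem \ref{theoremthin}), Remark \ref{remgeneralthin} applied to the tower from level $1$ forces $L=K^{2,1}$, contradicting that $L$ is proper. So we may assume $\sqrt3\notin L$.

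Next, assuming $\sqrt 3\notin L$: since $L$ has infinite degree, for every $\ell$ it contains an element of degree $>2^\ell$ over $\Q$, and by the classification of finite subfields of $K^{2,1}$ obtained above (the subfields of degree $2^\ell$ are exactly $K^{2,0}_\ell$, $K^{2,1}_\ell$ and $M_\ell$), any element $\alpha\in L$ of degree $2^\ell$ generates one of these three fields. Now $K^{2,1}_\ell$ and $M_\ell$ both contain $\sqrt 3$ for $\ell\ge2$ (indeed $\sqrt3\in K^{2,1}_1\subseteq K^{2,1}_\ell$, and $\sqrt3 x^{2,0}_\ell\in M_\ell$ together with $x^{2,0}_\ell$... wait — one must be careful: $M_\ell=\Q(\sqrt3 x^{2,0}_{\ell-1})$ contains $K^{2,0}_{\ell-1}$ but need not contain $\sqrt3$; I will instead use that $M_\ell\ne K^{2,0}_{\ell}$ and, from the lattice already established, $M_\ell\cap K^{2,0}=K^{2,0}_{\ell-1}$, so $M_\ell\not\subseteq K^{2,0}$; one then checks $M_\ell\cdot K^{2,1}_1=K^{2,1}_{\ell+1}$ so $\sqrt3\notin M_\ell$ would give a contradiction of degrees — actually $M_\ell(\sqrt3)=K^{2,1}_{\ell}$? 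I should verify this carefully). The cleanest route: since $\sqrt3\notin L$ and $\sqrt3\in K^{2,1}_\ell$ for $\ell\ge1$, no subfield $\Q(\alpha)\subseteq L$ can equal $K^{2,1}_\ell$; and since $\sqrt3\notin L$ but $\sqrt3\in M_\ell$ for $\ell\ge2$ (which I will establish from $M_\ell=K^{2,0}_{\ell-1}(\sqrt3 x^{2,0}_{\ell-1})$ and the fact that $K^{2,1}_\ell=K^{2,0}_{\ell-1}(\sqrt3)$ has the same degree, forcing $M_\ell$ and $K^{2,1}_\ell$ to be... no). Let me restate: the honest step is to prove $M_\ell$ contains $\sqrt3$ for $\ell\ge 2$, or failing that, to observe directly that whichever of the three options $\Q(\alpha)$ realizes, if it is $K^{2,1}_\ell$ or $M_\ell$ then $L$ contains $\sqrt3$ — this is the one computational point I would nail down using the explicit generators.

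Granting that point, every element of $L$ of $2$-power degree $\ge 4$ generates some $K^{2,0}_\ell$, so $L\supseteq K^{2,0}_\ell$ for infinitely many (hence all) $\ell$, whence $L\supseteq K^{2,0}$. Conversely $L\ne K^{2,1}$ and $[K^{2,1}:K^{2,0}]=2$ with $K^{2,1}=K^{2,0}(\sqrt3)$ force $L=K^{2,0}$. The main obstacle is the bookkeeping in the case $\sqrt3\notin L$: ensuring that the only infinite-degree possibility for the subfield generated inside $L$ is the $K^{2,0}$-tower, i.e. ruling out that $L$ is built from a mixture climbing the $M_\ell$ column; this is exactly where one uses that $M_{n+1}$ does not contain $M_n$ (so the $M_\ell$ do not form a chain) together with the presence of $\sqrt3$ in the $K^{2,1}_\ell$ and $M_\ell$ columns.
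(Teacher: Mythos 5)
Your case split on whether $\sqrt3\in L$ is sound in its first half (if $\sqrt3\in L$ then $L\supseteq K^{2,1}_1$, and thinness of $(K^{2,1}_n)_{n\ge1}$ together with Remark \ref{remgeneralthin} forces $L=K^{2,1}$), but the second half rests on a claim that is false: $M_\ell$ does \emph{not} contain $\sqrt3$. Indeed, if $\sqrt3\in M_\ell=\Q(\sqrt3\,x^{2,0}_\ell)$ then $x^{2,0}_\ell\in M_\ell$, hence $K^{2,0}_\ell\subseteq M_\ell$ and, comparing degrees, $M_\ell=K^{2,0}_\ell$, contradicting the distinctness already established in the lattice (equivalently, $M_\ell(\sqrt3)$ would contain $K^{2,0}_\ell(\sqrt3)=K^{2,1}_{\ell+1}$, which has degree $2^{\ell+1}$). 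The fallback you sketch --- that $\Q(\alpha)=M_\ell$ would still force $\sqrt3\in L$ --- is likewise unjustified: nothing proved so far rules out, a priori, an infinite-degree $L$ avoiding $\sqrt3$ while containing some of the $M_\ell$. So the computational point you defer cannot be nailed down, and as written the argument does not close.

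The repair is short and makes the case split unnecessary; it is the paper's proof. From the lattice of finite-degree subfields already established, \emph{all three} subfields of degree $2^\ell$, namely $K^{2,0}_\ell$, $K^{2,1}_\ell=K^{2,0}_{\ell-1}(\sqrt3)$ and $M_\ell$, contain $K^{2,0}_{\ell-1}$. Since $L$ has infinite degree, for arbitrarily large $\ell$ it contains an element of degree $2^\ell$, hence one of these three fields, hence $K^{2,0}_{\ell-1}$. Therefore $L$ contains infinitely many of the $K^{2,0}_n$, so $L\supseteq K^{2,0}$; and since $K^{2,1}=K^{2,0}(\sqrt3)$ has degree $2$ over $K^{2,0}$, either $L=K^{2,0}$ or $L=K^{2,1}$. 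Your concluding step is exactly this, so once you abandon the attempt to show that the degree-$2^\ell$ subfield of $L$ must be $K^{2,0}_\ell$ itself (or that the other two options inject $\sqrt3$ into $L$), the proof is complete.
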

\begin{proof}
Let $L$ be a subfield of $K^{2,1}$ of infinite degree. From the lattice of subfields of finite degree, $L$ contains infinitely many of the $K^{2,0}_n$, hence it contains $K^{2,0}$, but $K^{2,1}=K^{2,0}(\sqrt{3})$ is a degree $2$ extension, so $L$ is either $K^{2,0}$ or $K^{2,1}$. 
\end{proof}

\section{Towers with $\sqrt2$}\label{secSqrt2}

We return to one of the original motivations for this work, which was the problem of determining the pairs $(\nu,x_0)$ for which $K^{2,0}$ is a subfield of $K^{\nu,x_0}$. We prove three independent results in this direction, the last one solving precisely the latter problem.

\begin{proposition}\label{thmsqrt2}
Let $(\nu,x_0)\in\Omega$. The square root of $2$ is in $K$ if and only if either the square free part of $\nu+x_0$ is $2$, or $(\nu,x_0)$ belongs to one of the two following sets:
\begin{enumerate}
\item $\Sigma_1=\{(\nu,u_0-(\nu-k^2)^2)\in\Sigma\colon \nu\ge 2,\, 1\le k\le \sqrt{\nu-1}\}$, or
\item $\Sigma_2=\{(\nu,u_0-(\nu-k^2)^2)\in\Sigma\colon \nu\ge 3,\, \sqrt{\nu+1}\le k\le \sqrt{2\nu-1}\}$.
\end{enumerate}
\end{proposition}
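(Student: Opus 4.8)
First I would reduce the question ``$\sqrt2\in K$'' to a statement about the quadratic subfields of $K$. By Lemma \ref{genericsubtower2}, any quadratic extension of $\Q$ contained in $K$ already lies in $K_2$. When $u_0-x_0$ is not a square, Theorem \ref{theoremthin} says the tower is thin from $0$, so the only quadratic subfield is $K_1=\Q(\sqrt{\nu+x_0})$, and then $\sqrt2\in K$ iff $\Q(\sqrt2)=K_1$ iff the square free part of $\nu+x_0$ is $2$; this handles one of the two alternatives in the statement and shows the other two sets must be looked for inside $\Omega^1=\Sigma$ (Lemma \ref{lemcarasquare}). So the bulk of the proof is the case $(\nu,x_0)\in\Sigma$, i.e.\ $u_0-x_0=a^2$ with $1\le a\le\nu-1$.

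In that case, by Lemma \ref{lemothersqrtn} the three quadratic subfields of $K$ are exactly $\Q(\sqrt{\nu+x_0})=\Q(\sqrt{(\nu-a)(\nu+a)})$, $\Q(\sqrt{2(\nu+a)})$ and $\Q(\sqrt{2(\nu-a)})$. Hence $\sqrt2\in K$ if and only if one of these three fields equals $\Q(\sqrt2)$, i.e.\ one of $(\nu-a)(\nu+a)$, $2(\nu+a)$, $2(\nu-a)$ has square free part $2$. I would dispose of the first possibility quickly: $(\nu-a)(\nu+a)=\nu^2-a^2=\nu+x_0$, and in $\Sigma$ we always have $\nu+x_0=\nu^2-a^2$, whose square free part being $2$ is again the ``square free part of $\nu+x_0$'' alternative already covered (and one should check it is consistent with, or subsumed by, the explicit sets; in fact $\nu^2-a^2$ having square free part $2$ forces a relation that I expect to fall inside $\Sigma_1$ or $\Sigma_2$ after writing $a=\nu-k^2$, or else it is genuinely a separate case which is why the statement keeps the ``square free part of $\nu+x_0$ is $2$'' clause as a standing alternative). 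The remaining two possibilities are the symmetric conditions: $2(\nu-a)$ or $2(\nu+a)$ has square free part $2$, equivalently $\nu-a$ is a square or $\nu+a$ is a square. Writing $a=\nu-k^2$ turns ``$\nu-a=k^2$'' into the defining shape of $\Sigma_1$, with the constraint $1\le a\le\nu-1$ becoming $1\le\nu-k^2\le\nu-1$, i.e.\ $1\le k\le\sqrt{\nu-1}$ (the upper bound $a\le\nu-1$ gives $k^2\ge1$, the lower bound $a\ge1$ gives $k^2\le\nu-1$). Similarly ``$\nu+a$ is a square'', say $\nu+a=k^2$, combined with $a=\nu^2-\nu-x_0$ and $u_0-x_0=a^2$, and again $1\le a\le\nu-1$, should produce $\nu+1\le k^2\le 2\nu-1$, i.e.\ the range defining $\Sigma_2$; here I would also need $\nu\ge3$ for the interval $[\sqrt{\nu+1},\sqrt{2\nu-1}]$ to contain an integer-eligible value and for consistency with Lemma \ref{corcarasquareDEC}. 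Note $\nu-a=k^2$ and $\nu+a=k^2$ cannot hold simultaneously (that would force $a=0$), so $\Sigma_1$ and $\Sigma_2$ overlap only in the degenerate situations already excluded, which is why the statement lists them as two separate sets.

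The one genuinely delicate point is bookkeeping the inequalities: one must check that for every pair in $\Sigma_1$ (resp.\ $\Sigma_2$) the pair actually lies in $\Omega$ — i.e.\ the tower is totally real and strictly increasing in degree at every step — so that Lemma \ref{lemothersqrtn} genuinely applies and the three quadratic fields are honestly distinct and honestly in $K$; recall the caveat after Lemma \ref{lemothersqrtn} that for $(\nu,x_0)=(3,5)$ the fields collapse. I expect this to follow from Lemma \ref{corcarasquareDEC} together with the membership $(\nu,u_0-a^2)\in\Sigma$ being part of the hypothesis (we are intersecting with $\Sigma$ in the definitions of $\Sigma_1,\Sigma_2$), so that no separate totally-real check is needed beyond what $\Sigma\subseteq\Omega$ already gives; but I would state it explicitly. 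Conversely, for the ``only if'' direction one must be sure the three fields of Lemma \ref{lemothersqrtn} are \emph{all} the quadratic subfields, which is exactly the content of that lemma, so the equivalence is tight. The main obstacle, then, is not conceptual but the careful translation of the square-free-part conditions on $2(\nu\pm a)$ into the displayed parametric ranges and verifying those ranges are exactly the ones forced by $1\le a\le\nu-1$; everything else is an application of Lemma \ref{genericsubtower2}, Theorem \ref{theoremthin}, Lemma \ref{lemcarasquare}, and Lemma \ref{lemothersqrtn}.
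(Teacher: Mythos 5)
Your proposal is correct and follows essentially the same route as the paper: reduce to $K_2$ via Lemma \ref{genericsubtower2}, split on whether $u_0-x_0$ is a square using Theorem \ref{theoremthin}, invoke Lemma \ref{lemothersqrtn} to list the three quadratic subfields in the non-thin case, and translate ``$\nu\mp a$ is a square'' with $1\le a\le\nu-1$ into the ranges defining $\Sigma_1$ and $\Sigma_2$. The paper likewise keeps ``square free part of $\nu+x_0$ equals $2$'' as a standing alternative rather than folding it into $\Sigma_1,\Sigma_2$, and your inequality bookkeeping matches its computation exactly.
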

\begin{proof}
Write $s$ for the square free part of $\nu+x_0$. By Lemma \ref{genericsubtower2}, we know that $\sqrt{2}\in K$ if and only if $\sqrt{2}\in K_2$. 

From left to right. Assume $\sqrt{2}\in K_2$. Either we have $K_1=\Q(\sqrt2)$, in which case $s=2$, or $\Q(\sqrt2)$ is a subfield of $K_2$ distinct from $K_1$. Note that the latter implies $s\ne2$, and by Theorem \ref{theoremthin}, since the tower is not thin, $\nu^2-\nu-x_0=a^2$ is a square in $\Z$. Therefore, by Lemma \ref{lemothersqrtn}, $\Q(\sqrt2)$ is either $\Q(\sqrt{2(\nu-a)})$ or $\Q(\sqrt{2(\nu+a)})$, hence either $\nu-a$ is a square, or $\nu+a$ is a square.

If $\nu-a=k^2$, say for some positive $k$, then $a=\nu-k^2$, and the condition $1\le a\le\nu-1$ is equivalent to $1\le k\le \sqrt{\nu-1}$. Therefore, the subset of $\Sigma$ such that $\nu-a$ is a square is $\Sigma_1$. 

If $\nu+a=k^2$, say for some positive $k$, then $a=k^2-\nu$, and the condition $1\le a\le\nu-1$ is equivalent to $\sqrt{\nu+1}\le k\le \sqrt{2\nu-1}$. Note that this cannot happen for $\nu=2$. Therefore, the subset of $\Sigma$ such that $\nu+a$ is a square is $\Sigma_2$.

From right to left. If $s=2$, then $\Q(\sqrt2)=K_1$. Assume $s\ne2$, hence by hypothesis $(\nu,x_0)$ belongs to $\Sigma_1$ or $\Sigma_2$. So in particular, we have $x_0=u_0-(\nu-k^2)^2$ for some integer $k$, hence $u_0-x_0=(\nu-k^2)^2$ is a square (so the tower is not thin). If $(\nu,x_0)\in\Sigma_1$, then $\nu-k^2>0$, hence $\Q(\sqrt{2(\nu-(\nu-k^2))})=\Q(\sqrt2)$. If $(\nu,x_0)\in\Sigma_2$, then $k^2-\nu>0$, hence $\Q(\sqrt{2(\nu+(k^2-\nu))})=\Q(\sqrt2)$. These two quadratic extensions are in $K^{\nu,x_0}$ by Lemma \ref{lemothersqrtn}.
\end{proof}

Note that in the case that $\sqrt2$ appeared in $K$ because the square-free part of $\nu+x_0$ is $2$, the tower may or may not be thin.

\begin{proposition}\label{propx1ISksqrt2}
The set of pairs $(\nu,x_0)\in\Omega$ such that the square-free part of $\nu+x_0$ is $2$ and $x^{2,0}_2$ lies in $K^{\nu,x_0}$ is exactly the set 
$$
X=\{(2(b^2+6bd+10d^2),2(b^2+8bd+14d^2)^2-2(b^2+6bd+10d^2))\in\Omega\colon b,d\in\Z\}.
$$
Moreover, we have the following identity:
\begin{equation}\label{eqpropx1ISksqrt2}
x^{\nu,x_0}_2=\sqrt{2(b^2+6bd+10d^2)+(b^2+8bd+14d^2)\sqrt2}=b\sqrt{2+\sqrt2}+d\left(\sqrt{2+\sqrt2}\right)^3.
\end{equation}
\end{proposition}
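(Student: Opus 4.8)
The statement describes the pairs $(\nu,x_0)\in\Omega$ for which the square-free part of $\nu+x_0$ is $2$ and $x^{2,0}_2=\sqrt{2+\sqrt2}$ lies in $K^{\nu,x_0}$. The guiding principle is that when the square-free part of $\nu+x_0$ is $2$ we have $K_1=\Q(\sqrt2)$, and by Lemma \ref{genericsubtower2} the condition $x^{2,0}_2\in K$ forces $x^{2,0}_2\in K_2$, so in fact $K^{2,0}_2=\Q(\sqrt{2+\sqrt2})$ must be a subfield of $K_2$. Since $[K_2:\Q]=4$ and both $K_2$ and $K^{2,0}_2$ are quartic, the condition $x^{2,0}_2\in K$ is equivalent to $K_2=K^{2,0}_2$. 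So first I would reduce the problem to characterizing when $K_2^{\nu,x_0}=\Q(\sqrt{2+\sqrt2})$.

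\textbf{Reduction to an arithmetic equation.} Write $\nu+x_0=2c^2$ with $c\in\Z$ (possible precisely because the square-free part of $\nu+x_0$ is $2$), so $x_1=c\sqrt2$ and $x_2=\sqrt{\nu+c\sqrt2}$. The equality $K_2=\Q(\sqrt{2+\sqrt2})$ holds if and only if $x_2$ can be written as a $\Q(\sqrt2)$-linear combination that is in $\Q(\sqrt{2+\sqrt2})$; concretely, since $\{\sqrt{2+\sqrt2},(\sqrt{2+\sqrt2})^3\}$ together with $\{1,\sqrt2\}$ spans $\Q(\sqrt{2+\sqrt2})$ over $\Q$, and since $x_2$ squares into $\Q(\sqrt2)$, the condition is that $x_2=b\sqrt{2+\sqrt2}+d(\sqrt{2+\sqrt2})^3$ for some rationals $b,d$ — and one checks (clearing denominators, using that $\nu,x_0\in\Z$ and that $K_2/\Q$ behaves well at $2$) that $b,d$ can be taken in $\Z$. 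Now I would simply square the right-hand side: using $(\sqrt{2+\sqrt2})^2=2+\sqrt2$, $(\sqrt{2+\sqrt2})^4=(2+\sqrt2)^2=6+4\sqrt2$, and $(\sqrt{2+\sqrt2})^6=(2+\sqrt2)^3=20+14\sqrt2$, a direct expansion gives
$$
\left(b\sqrt{2+\sqrt2}+d(\sqrt{2+\sqrt2})^3\right)^2=(2b^2+6bd+10d^2)\cdot?\ldots
$$
— more carefully, collecting the rational part and the $\sqrt2$-part one obtains
$$
\left(b\sqrt{2+\sqrt2}+d(\sqrt{2+\sqrt2})^3\right)^2=2(b^2+6bd+10d^2)+(b^2+8bd+14d^2)\sqrt2.
$$
Matching this with $x_2^2=\nu+x_1=\nu+c\sqrt2$ forces $\nu=2(b^2+6bd+10d^2)$ and $c=b^2+8bd+14d^2$, hence $\nu+x_0=2c^2=2(b^2+8bd+14d^2)^2$ and $x_0=2(b^2+8bd+14d^2)^2-2(b^2+6bd+10d^2)$. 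This is exactly the parametrization defining $X$, and it also yields identity \eqref{eqpropx1ISksqrt2}.

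\textbf{The two directions and the main obstacle.} For the ``only if'' direction, given $(\nu,x_0)\in\Omega$ with square-free part of $\nu+x_0$ equal to $2$ and $x_2^{2,0}\in K$, I run the above: $K_2=\Q(\sqrt{2+\sqrt2})$, write $x_2$ in the stated basis, square, and read off the parametrization. For the ``if'' direction, given $b,d\in\Z$ with the displayed pair in $\Omega$, the identity \eqref{eqpropx1ISksqrt2} shows directly that $x_2^{\nu,x_0}\in\Q(\sqrt{2+\sqrt2})=K^{2,0}_2\subseteq K^{2,0}$, so in particular $x_2^{2,0}$ — being primitive of degree $4$ and lying in the quartic field $\Q(\sqrt{2+\sqrt2})$ — generates the same field, giving $x_2^{2,0}\in K^{\nu,x_0}$; one must also note that the square-free part of $\nu+x_0=2(b^2+8bd+14d^2)^2$ is indeed $2$ (as long as $b^2+8bd+14d^2\neq0$, which is automatic since $(\nu,x_0)\in\Omega$ forces $\nu\geq2$). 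The step I expect to be the main obstacle is the integrality claim: showing that if $x_2\in\Q(\sqrt{2+\sqrt2})$ has $x_2^2=\nu+x_1\in\Z[\sqrt2]$, then the coordinates $b,d$ in the basis $\{\sqrt{2+\sqrt2},(\sqrt{2+\sqrt2})^3\}$ are actually integers rather than merely rationals. This should follow from a ramification argument at the prime $2$ (the extension $\Q(\sqrt{2+\sqrt2})/\Q$ is totally ramified at $2$, and $\sqrt{2+\sqrt2}$ is, up to units, a uniformizer), or alternatively from an elementary but slightly fiddly $2$-adic valuation computation on the equations $\nu=2(b^2+6bd+10d^2)$, $c=b^2+8bd+14d^2$ — I would isolate this as a short sublemma and keep the rest of the argument as the clean algebraic identity above.
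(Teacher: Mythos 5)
Your proposal is correct and reaches the same parametrization by the same overall architecture as the paper (reduce to $K^{\nu,x_0}_2=K^{2,0}_2$, expand $x^{\nu,x_0}_2$ in powers of $y=\sqrt{2+\sqrt2}$, show the even-degree coefficients vanish, then square and match), but your key elimination step is genuinely different and cleaner. The paper writes $x_2=a+by+cy^2+dy^3$ and works through the elementary symmetric functions of the four conjugates: the trace gives $a=-2c$, the cubic symmetric function gives $8c(b^2+8bd+14d^2)=0$, and a case analysis (the discriminant $8d^2$ of $b^2+8bd+14d^2$ is not a rational square; $b=d=0$ forces $\nu+x_0=0$) rules out $c\ne0$. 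You instead note that $x_2^2=\nu+x_1\in\Q(\sqrt2)$ while $x_2\notin\Q(\sqrt2)$, so writing $x_2=u+vy$ with $u,v\in\Q(\sqrt2)$ the cross term $2uvy$ must vanish and $v\ne0$, whence $u=0$ and $x_2=by+dy^3$; this two-line Galois-conjugation argument over $\Q(\sqrt2)$ replaces the paper's longest computation, and you should write it out explicitly since it is the one load-bearing step you leave implicit. Two smaller points. First, Lemma \ref{genericsubtower2} by itself only places the quartic field $\Q(x^{2,0}_2)$ inside $K^{\nu,x_0}_3$; to conclude $K^{\nu,x_0}_2=K^{2,0}_2$ you should argue as the paper does, that $\Q(x^{2,0}_2)$ contains $K^{\nu,x_0}_1=\Q(\sqrt2)$ and the tower is thin from level $1$ (Theorem \ref{theoremthin}), so it equals some $K_n$, necessarily $K_2$ by degree. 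Second, the integrality of $b,d$ that you flag as the main obstacle is dispatched in the paper by the fact that $\{1,y,y^2,y^3\}$ is an integral basis of $\Q(y)=\Q(\zeta_{16})^{+}$, so the coordinates of the algebraic integer $x_2$ lie in $\Z$; your ramification sketch at $2$ also works (the valuations of $by$ and $dy^3$ at the totally ramified prime above $2$ are incongruent modulo $1$, and the odd primes are prime to the discriminant), but citing the known integral basis is shorter.
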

\begin{proof}
Suppose that the square-free part of $\nu+x_0$ is $2$ (hence $K^{\nu,x_0}_1=K^{2,0}_1=\Q(\sqrt{2})$), and that $x^{2,0}\in K^{\nu,x_0}$. We will show that $(\nu,x_0)$ lies in $X$. Since the tower $(K^{\nu,x_0}_n)_{n\ge1}$ is thin by Theorem \ref{theoremthin} and $K^{2,0}_2$ contains $\Q(\sqrt{2})=K^{\nu,x_0}_1$, we have $K^{\nu,x_0}_2=K^{2,0}_2$. 

Choose $\kappa\in\Z$ positive such that $\nu+x_0=2\kappa^2$. In this proof we write $y=x^{2,0}_2=\sqrt{2+\sqrt2}$ and $z=\sqrt{2-\sqrt2}$. Since $\{1,y,y^2,y^3\}$ is a power basis for  $\Q(y)$, we can write 
\begin{equation}\label{eqpowerbasisnu2x00}
x^{\nu,x_0}_2=a+by+cy^2+dy^3.
\end{equation}
The minimal polynomial of $x^{\nu,x_0}_2$ over $\Q$ is $P^{\nu,x_0}=X^4-2\nu X^2+\nu^2-2\kappa^2$. So the trace of $x^{\nu,x_0}_2$ over $\Q$ is $0$, and Equation \eqref{eqpowerbasisnu2x00} gives
$$
(a+by+cy^2+dy^3)+(a-by+cy^2-dy^3)
+(a+bz+cz^2+dz^3)+(a-bz+cz^2-dz^3)=0, 
$$
hence $4a+2c(y^2+z^2)=4a+8c=0$, and Equation \eqref{eqpowerbasisnu2x00} becomes 
\begin{equation}\label{eqpowerbasisnu2x00b}
x^{\nu,x_0}_2=-2c+by+cy^2+dy^3.
\end{equation}
Write 
$$
r_1=-2c+by+cy^2+dy^3,\qquad r_2=-2c-by+cy^2-dy^3
$$
$$
r_3=-2c+bz+cz^2+dz^3,\qquad\textrm{ and}\qquad r_4=-2c-bz+cz^2-dz^3.
$$
From Equation \eqref{eqpowerbasisnu2x00b}, the coefficients of $X$ in $P^{\nu,x_0}$ give the system
\begin{equation}\label{casespowerbasis}
\begin{cases}
r_1r_2+r_1r_3+r_1r_4+r_2r_3+r_2r_4+r_3r_4=-2\nu\\
r_1r_2r_3+r_1r_2r_4+r_1r_3r_4+r_2r_3r_4=0\\
r_1r_2r_3r_4=\nu^2-2\kappa^2. 
\end{cases}
\end{equation}
A simple computation gives 
$$
r_1r_2=4c^2-(4c^2+b^2)y^2+(c^2-2bd)y^4-d^2y^6,
$$
so we have
$$
r_1r_2r_3=[4c^2-(4c^2+b^2)y^2+(c^2-2bd)y^4-d^2y^6][-2c+bz+cz^2+dz^3],
$$
$$
r_1r_2r_4=[4c^2-(4c^2+b^2)y^2+(c^2-2bd)y^4-d^2y^6][-2c-bz+cz^2-dz^3],
$$
hence
$$
r_1r_2(r_3+r_4)=[4c^2-(4c^2+b^2)y^2+(c^2-2bd)y^4-d^2y^6](-4c+2cz^2)
$$
and by symmetry
$$
r_3r_4(r_1+r_2)=[4c^2-(4c^2+b^2)z^2+(c^2-2bd)z^4-d^2z^6](-4c+2cy^2). 
$$
Note that $(-4c+2cz^2)+(-4c+2cy^2)=-8c+2c(y^2+z^2)=0$. So the sum of the two above expressions gives
\begin{multline}\notag
-(4c^2+b^2)[y^2(-4c+2cz^2)+z^2(-4c+2cy^2)]\\
+(c^2-2bd)[y^4(-4c+2cz^2)+z^4(-4c+2cy^2)]\\
-d^2[y^6(-4c+2cz^2)+z^6(-4c+2cy^2)]=0,
\end{multline}
hence
\begin{multline}\notag
-(4c^2+b^2)(-16c+4cy^2z^2)\\
+(c^2-2bd)[-4c(y^4+z^4)+2c(y^4z^2+z^4y^2)]\\
-d^2[-4c(y^6+z^6)+2c(y^6z^2+z^6y^2)]=0,
\end{multline}
hence, since $y^2z^2=2$, $y^2+z^2=4$, $y^4+z^4=12$, $y^6+z^6=40$, we get
$$
8c(4c^2+b^2)-32c(c^2-2bd)+112cd^2=0
$$
hence
$$
8c(b^2+8bd+14d^2)=0. 
$$
The discriminant of $b^2+8bd+14d^2$ seen as a polynomial in $b$ is $64d^2-56d^2=8d^2$, so if $c\ne0$ then $b=-4d\pm d\sqrt2$, which implies $b=d=0$. 

Assume $b=d=0$. We show that this is impossible. We have 
$$
r_1=c(-2+y^2)=r_2, \qquad r_3=c(-2+z^2)=r_4,
$$
hence Equations \eqref{casespowerbasis} give
$$
\begin{aligned}
-2\nu&=r_1r_2+r_1r_3+r_1r_4+r_2r_3+r_2r_4+r_3r_4\\
&=r_1^2+2r_1r_3+2r_2r_3+r_3^2=r_1^2+4r_1r_3+r_3^2\\
&=c^2\left(4-4y^2+y^4+4(4-2(y^2+z^2)+y^2z^2)+4-4z^2+z^4\right)\\
&=c^2(24-12(y^2+z^2)+y^4+z^4+4y^2z^2)=-4c^2, 
\end{aligned}
$$
hence $\nu=2c^2$. So we have
$$
\begin{aligned}
\nu^2-2\kappa^2&=r_1r_2r_3r_4=r_1^2r_3^2\\
&=c^4(4-4y^2+y^4)(4-4z^2+z^4)\\
&=c^4(16-16z^2+4z^4-16y^2+16y^2z^2-4y^2z^4+4y^4-4y^4z^2+y^4z^4)\\
&=c^4(16-16(y^2+z^2)+4(y^4+z^4)+16y^2z^2-4y^2z^2(y^2+z^2)+y^4z^4)\\
&=c^4(16-16\times4+4\times12+16\times2-4\times2\times4+2^2)\\
&=4c^4=\nu^2,
\end{aligned}
$$
hence $2\kappa^2=\nu+x_0=0$, which is impossible.

Assume $c=0$. We have
$$
r_1=by+dy^3=-r_2, \qquad r_3=bz+dz^3=-r_4,
$$
hence Equations \eqref{casespowerbasis} give
\begin{multline}\notag
-2\nu=r_1r_2+r_1r_3+r_1r_4+r_2r_3+r_2r_4+r_3r_4=-(r_1^2+r_3^2)=\\
-[b^2(y^2+z^2)+2bd(y^4+z^4)+d^2(y^6+z^6)]=-(4b^2+24bd+40d^2), 
\end{multline}
which gives the expression that we wanted for $\nu$. Also we have
\begin{multline}\notag
\nu^2-2\kappa^2=r_1r_2r_3r_4=r_1^2r_3^2=y^2z^2(b+dy^2)^2(b+dz^2)^2=\\
2(b^2+bdz^2+bdy^2+d^2y^2z^2)^2=
2(b^2+4bd+2d^2)^2. 
\end{multline}
hence 
$$
\begin{aligned}
2\kappa^2&=(2b^2+12bd+20d^2)^2-2(b^2+4bd+2d^2)^2\\
&=2b^4+32b^3d+184b^2d^2+448bd^3+392d^4\\
&=2(b^2+8bd+14d^2)^2.
\end{aligned}
$$
The expression for $x_0$ follows. Hence $(\nu,x_0)$ lies in $X$ and Equation \eqref{eqpropx1ISksqrt2} is proven. 

Conversely, assume that $(\nu,x_0)$ lies in $X$. So in particular, the square-free part of $\nu+x_0$ is two, and a simple calculation shows that Equation \eqref{eqpropx1ISksqrt2} is satisfied, namely, 
$$
x^{\nu,x_0}_2=by+dy^3=y(b+dy^2)
$$
and we conclude because $b+dy^2\in K^{2,0}_1=\Q(\sqrt{2})=K^{\nu,x_0}_1$. 
\end{proof}

\begin{proof}[Proof of Corollary \ref{CorKnux020}]
If $(K^{\nu,x_0})_{n\ge0}$ is thin, then $K^{\nu,x_0}_n=K^{2,0}_n$ for every $n\ge0$. So in particular all the quartic extensions $K^{\nu,x_0}_{n+2}/K^{\nu,x_0}_n$ are Galois with Galois group $C_4$. If $\nu\ne2$, then by Lemma \ref{lemEllCurves}, all but finitely many of the $f_n$ are non-squares, and we deduce from Corollary \ref{corNotSq} that the Galois group of the splitting field of $K^{\nu,x_0}_{n+2}/K^{\nu,x_0}_n$ is not $C_4$ for those $n$. 

If $(\nu,x_0)\in\Omega^1$ and $(\nu,x_0)\ne(2,1)$, then by Theorem \ref{lemlatticestructure} and Lemma \ref{lemEllCurves}, we know that for every $n\ge2$, $K^{\nu,x_0}_n$ is the unique subfield of $K^{\nu,x_0}$ of degree $2^n$ over $\Q$, so we have $K^{\nu,x_0}_n=K^{2,0}_n$ for every $n\ge 2$. In particular we have $K^{\nu,x_0}=\cup_{n\ge2}K^{\nu,x_0}_n=\cup_{n\ge2}K^{2,0}_n=K^{2,0}$, so $K^{\nu,x_0}_n=K^{2,0}_n$ for every $n\ge0$ because $(K^{2,0}_n)_{n\ge0}$ is thin, so $(K^{\nu,x_0}_n)_{n\ge0}$ is thin, which contradicts the fact that $(\nu,x_0)$ lies in $\Omega^1$. So in this case we have $(\nu,x_0)=(2,1)$. \end{proof}

If $\zeta_m+\zeta_{m}^{-1}$ is in $K^{\nu,x_0}$ for some $m$, then $m$ has the form either $2^rp_1p_2$ for $r\le2$, or $2^rp_1$ for $r\ge3$, or $2^r$ for $r\ge2$, where $p_1$ and $p_2$ are distinct Fermat primes.

\begin{proof}[Proof of Corollary \ref{CorFermat}] By \cite{CVV20}, we know that $m$ has the form $2^rp_1\dots p_s$, where $r\ge2$ and the $p_i$ are distinct Fermat primes. Each Fermat prime $p_i$ contributes with the unique quadratic extension $\Q(\sqrt{p_i})$. Also $\Q(\zeta_4)$ contributes with $\Q(\sqrt{-1})$, for each $r\ge3$, $\Q(\zeta_{2^r})$ contributes with $\Q(\sqrt{-1})$, $\Q(\sqrt{2})$ and $\Q(\sqrt{-2})$. Since square roots of primes are linearly independent over $\Q$, and by Theorem \ref{main} we know that $K^{\nu,x_0}$ has at most three quadratic sub-extensions, the corollary follows. 

\end{proof}

\noindent Xavier Vidaux (corresponding author)\\
Universidad de Concepci\'on, Concepci\'on, Chile\\
Facultad de Ciencias F\'isicas y Matem\'aticas\\
Departamento de Matem\'atica\\
Casilla 160 C\\
Email: xvidaux@udec.cl\\

\noindent Carlos R. Videla\\
Mount Royal University, Calgary, Canada\\
Department of Mathematics and Computing\\
email: cvidela@mtroyal.ca

\end{document}